\newtheorem{thm}{Theorem}[section]
\newtheorem{Lemma}[thm]{Lemma}
\title{\vspace{0cm}{Hamiltonian cycles passing through matchings in $k$-ary $n$-cubes \thanks{This work is supported by National Natural Science Foundation of China (grant no. 12061047), and supported by Natural Science Foundation of Jiangxi Province (nos. 20212BAB201027 and 20192BAB211002).}}}
\author{Baolai Liao$^{1}$, Fan Wang$^{1,}$\footnote{Corresponding author.}
\\\small{1. School of Mathematics and Computer Sciences, Nanchang University,}
\\\small{Nanchang, Jiangxi 330000, P. R. China}
\\\small{E-mail addresses: baolai0522ncu@foxmail.com, wangfan@ncu.edu.cn}}
\date{}
\begin{document}


\maketitle

\begin{abstract}
As we all know, the $k$-ary $n$-cube is a highly efficient interconnect network topology structure. It is also a concept of great significance, with a broad range of applications spanning both mathematics and computer science. In this paper, we study the existence of Hamiltonian cycles passing through prescribed matchings in $k$-ary $n$-cubes, and obtain the following result. For $n\geq5$ and $k\geq4$, every matching with at most $4n-20$ edges is contained in a Hamiltonian cycle in the $k$-ary $n$-cube.
\end{abstract}

\textbf{Keywords:} Interconnection network; $k$-ary $n$-cube; Hamiltonian cycle; Spanning $m$-path; Matching

\textbf{2020 Mathematics Subject Classification:} 05C38; 05C45

\section{Introduction}

The Hamiltonian problem constitutes a pivotal subfield within graph theory. Due to the fact that many problems in operations research, computer science, and coding theory can be transformed into Hamiltonian problems, it has attracted widespread attention and research \cite{5,10,11,15,31,32,35}.

The {\it $k$-ary $n$-cube}, denoted by $Q_n^{k}$, where $n\geq1$ and $k\geq2$, is one of the most popular interconnection networks for parallel and distributed systems \cite{20,21,23,25,33,34}. It is a graph consisting of $k^{n}$ vertices, each of which has the form $u=\alpha_1\cdots\alpha_n$, where $0\leq\alpha_i\leq k-1$ for every $i\geq1$. Two vertices $u=\alpha_1\cdots\alpha_n$ and $v=\beta_1\cdots\beta_n$ are adjacent if and only if there exists an integer $j\in\{1,\ldots,n\}$ such that
$\alpha_j=\beta_j\pm1$ (mod $k$) and $\alpha_i=\beta_i$ for every $i\in\{1,\ldots,n\}\setminus\{j\}$. Each vertex in $Q_n^{k}$ has $2n$ neighbors with $k\geq3$, and there is no cycle of length 3 in $Q_n^{k}$ when $k\geq4$.

It is well known that $Q_n^k$ is Hamiltonian \cite{6}. Given an edge set in the $k$-ary $n$-cube, which conditions guarantee the existence of a Hamiltonian cycle in the $k$-ary $n$-cube containing the edge set? Many scholars investigated the problem of embedding Hamiltonian cycles and paths in the hypercube (2-ary $n$-cube) with prescribed edges \cite{10,11,15,31}. With the deepening of research on hypercube, the corresponding conclusion has gradually been extended to $k$-ary $n$-cube \cite{24,26,27,29,32,35}.

A forest is {\it linear} if each component of it is a path. Wang et al. \cite{26} and Stewart \cite{24} independently investigated the case where $k\geq3$, and they both obtained the following result.

\begin{thm}\cite{24,26}\label{forest} For $n\geq2$ and $k\geq3$, if $F$ is a linear forest in $Q_n^{k}$ with $|F|\leq2n-1$, then $F$ is contained in a Hamiltonian cycle in $Q_n^{k}$.
\end{thm}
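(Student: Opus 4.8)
The plan is to argue by induction on $n$, exploiting the recursive decomposition of $Q_n^k$ into $k$ disjoint copies of $Q_{n-1}^k$. Writing a vertex as $\alpha_1\cdots\alpha_n$, fix a splitting dimension $d$ and let $Q^{(0)},\ldots,Q^{(k-1)}$ be the subgraphs induced by the vertices with $\alpha_d=0,\ldots,k-1$; each $Q^{(i)}\cong Q_{n-1}^k$, and consecutive copies $Q^{(i)}$ and $Q^{(i+1)}$ (indices mod $k$) are joined by a perfect matching of dimension-$d$ edges, so the quotient is a cycle $C_k$. A forest edge is then either \emph{internal} (lying inside one copy, a non-$d$ edge) or \emph{crossing} (a dimension-$d$ edge between consecutive copies). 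The two engines of the argument are: (i) the inductive hypothesis, which produces a Hamiltonian cycle through the prescribed internal edges inside a copy provided that copy receives at most $2(n-1)-1=2n-3$ of them; and (ii) a splicing step in which a pair of parallel crossing edges fuses two cyclic structures in adjacent copies into one.

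\textbf{Base case.} For $n=2$ I would treat $Q_2^k$ as the $k\times k$ torus $C_k\,\square\,C_k$ directly and show that every linear forest with at most $3$ edges lies in a Hamiltonian cycle. Using the vertex- and dimension-transitivity of the torus one may normalise the position and orientation of the edges, reducing to a short list of configurations, classified by how many of the $\le 3$ edges are horizontal versus vertical, whether two of them share an endpoint, and whether two are parallel; each configuration is settled by exhibiting an explicit Hamiltonian cycle. (The case $n=1$, where $Q_1^k=C_k$ is itself the only Hamiltonian cycle, is trivial.)

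\textbf{Inductive step.} Given $F$ with $|F|\le 2n-1$ in $Q_n^k$, the first task is to choose $d$. Since the $\le 2n-1$ edges are distributed over $n$ dimensions, a pigeonhole/averaging argument lets me pick $d$ so that the crossing edges are controlled and, crucially, so that the internal edges split among the copies with no copy exceeding the inductive threshold $2n-3$. When the edges are spread out this is immediate; the genuine difficulty lies in the \emph{concentrated} cases, where nearly all of $F$ falls into one or two copies or into a single dimension, and there one must either re-choose $d$ or peel off the offending edges and place them by hand. Once each copy $Q^{(i)}$ carries a linear subforest $F_i$ (a subforest of $F$, hence again linear) with $|F_i|\le 2n-3$, the inductive hypothesis yields a Hamiltonian cycle $H_i\supseteq F_i$ in $Q^{(i)}$.

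\textbf{Merging, and the main obstacle.} It remains to weld $H_0,\ldots,H_{k-1}$ into a single Hamiltonian cycle of $Q_n^k$ that also contains every crossing forest edge. To merge $H_i$ with $H_{i+1}$ one finds an edge $xy\in H_i$ whose parallel copies $x',y'$ in $Q^{(i+1)}$ form an edge $x'y'\in H_{i+1}$; deleting $xy$ and $x'y'$ and inserting the crossing edges $xx'$ and $yy'$ fuses the two cycles, and performing $k-1$ such merges along a spanning path of the quotient $C_k$ yields one cycle. Forcing a prescribed crossing forest edge $uu'$ into the result requires that, in $H_i$, the vertex $u$ be cycle-adjacent to some $p$ whose parallel $p'$ is cycle-adjacent to $u'$ in $H_{i+1}$, so that the merge can be executed exactly at $u,u'$. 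Guaranteeing this jointly at all crossing forest edges is where I expect the real work to lie: the induction must be applied in a strengthened form that also prescribes an incident cycle-edge (or position) at the relevant vertices, and one must choreograph the $k-1$ merges consistently around the cyclic arrangement so that every crossing forest edge is absorbed while the endpoints stay compatible and the union closes into a single cycle rather than several. Careful bookkeeping of how many ``free'' parallel edges each $H_i$ supplies is precisely what makes the budget $|F|\le 2n-1$ work out, and handling the concentrated cases together with this simultaneous-merge constraint is the anticipated crux of the proof.
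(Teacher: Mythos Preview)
The paper does not prove this theorem at all: it is quoted from \cite{24,26} as a known result and is used throughout the paper as a black box (see, e.g., the proofs of Lemmas~\ref{pathpartition9} and~\ref{pathpartition11} and Claim~1 in the proof of Theorem~\ref{mosttheorem}). There is therefore no ``paper's own proof'' to compare your proposal against.

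For what it is worth, your inductive scheme---split along a dimension $d$, apply the hypothesis in each subcube, and merge via parallel crossing edges---is exactly the architecture used in the cited sources \cite{24,26}, and it is the same machinery the present paper deploys repeatedly in its own arguments (Lemma~\ref{hamilpath2} together with the ``choose $s_p s_p'\in E(\cdot)\setminus M$ and splice in $P_{s_{p+1},s_{p+1}'}$'' manoeuvre). Your identification of the crux is accurate: the real work in \cite{24,26} is precisely the concentrated cases and the simultaneous control of merge sites at prescribed crossing edges, and your sketch does not yet supply those details. In particular, the claim that one can always choose $d$ so that every copy receives at most $2n-3$ internal edges does not follow from naive averaging when $|F|=2n-1$ and the edges cluster in one dimension or one copy; the published proofs handle this by a more careful choice of $d$ combined with a strengthened inductive statement about spanning paths with prescribed endpoints, not just Hamiltonian cycles. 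So your plan is on the right track but is, as you yourself note, still a plan rather than a proof.
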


Wang et al. \cite{29} considered the problem of embedding Hamiltonian cycles in the $k$-ary $n$-cubes with small matchings.

\begin{thm}\cite{29}\label{maintheorem} For $n\geq2$ and $k\geq3$, if $M$ is a matching in $Q_n^{k}$ with $|M|\leq3n-8$, then $M$ is contained in a Hamiltonian cycle in $Q_n^{k}$.
\end{thm}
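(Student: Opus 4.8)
The plan is to induct on $n$, splitting $Q_n^{k}$ into $k$ copies of $Q_{n-1}^{k}$ along a carefully chosen coordinate and then stitching Hamiltonian cycles of the copies together. First note that for $2\le n\le 7$ we have $3n-8\le 2n-1$, so a matching $M$ with $|M|\le 3n-8$ is a linear forest with at most $2n-1$ edges and Theorem~\ref{forest} already yields a Hamiltonian cycle through $M$ (this also covers the cases where $3n-8\le 0$). Hence only $n\ge 8$ needs a genuine argument, and there the inductive hypothesis is available for $Q_{n-1}^{k}$ with matchings of size up to $3(n-1)-8=3n-11$.

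For the inductive step, write $Q_n^{k}=Q^{0}\cup\cdots\cup Q^{k-1}$, where $Q^{i}$ is induced by the vertices whose $j$-th coordinate is $i$ (coordinate $j$ to be chosen), each $Q^{i}\cong Q_{n-1}^{k}$, and consecutive copies $Q^{i},Q^{i+1}$ (indices mod $k$) are joined by a perfect matching $E_{i}$ of dimension-$j$ edges. Let $M_j$ be the set of ``crossing'' edges of $M$ in dimension $j$ and $M^{i}=M\cap E(Q^{i})$. Since $\sum_{j'}|M_{j'}|=|M|\le 3n-8<3n$, at least three coordinates $j'$ satisfy $|M_{j'}|\le 2$, which gives some room to pick $j$ so that in addition $|M^{i}|\le 3n-11=3(n-1)-8$ for every $i$. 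Granting such a $j$, the inductive hypothesis provides, in each $Q^{i}$, a Hamiltonian cycle through $M^{i}$, and the task becomes: splice these $k$ cycles into one Hamiltonian cycle of $Q_n^{k}$ that uses all of $M_j$.

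The splicing is the technical heart. The natural move is to merge the copies one at a time along dimension-$j$ edges: to absorb $Q^{i+1}$ into the cycle already covering $Q^{0}\cup\cdots\cup Q^{i}$, replace a suitable edge $uw$ of the current cycle (with $u,w\in Q^{i}$) by $u\to v\leadsto v''\to w$, where $v,v''$ are the dimension-$j$ neighbours of $u,w$ and $v\leadsto v''$ is a Hamiltonian path of $Q^{i+1}$ through $M^{i+1}$; if $uv\in M_j$ this simultaneously places the crossing edge on the cycle. Making this robust requires the path version of the statement — a Hamiltonian path (and, when two crossing edges of $M$ lie between the same pair of copies, a spanning $2$-path) of $Q_{n-1}^{k}$ through a prescribed matching with prescribed endpoints — so the induction should really be run on this strengthened ``spanning $m$-path'' statement, with its own small base cases checked directly or via Theorem~\ref{forest}. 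Verifying that the prescribed endpoints forced by the merge are always attainable, and bounding the endpoint/matching restrictions inherited at the lower level so they stay within budget, is where most of the bookkeeping lives.

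The main obstacle I expect is the \emph{concentrated case}: it may be impossible to pick $j$ with $|M_j|\le 2$ that also balances the copies, because every such $j$ leaves one copy $Q^{i_0}$ carrying $3(n-1)-7$ edges (one beyond the inductive budget) and at most two edges of $M$ elsewhere. Since this must then hold simultaneously for all of the $\ge 3$ light coordinates, the bulk of $M$ is forced to have constant coordinate in each of those directions, i.e.\ $M$ is essentially a matching inside a low-codimension subcube supported on the remaining directions. Such an $M$ should be handled separately — either by splitting along the dimension in which $M$ concentrates (so all of $M$ becomes crossing edges, the copies are matching-free, and one only routes a Hamiltonian cycle of the layered structure through the prescribed ``vertical'' edges), or by peeling off the at most two stray edges and invoking the inductive hypothesis in $Q^{i_0}$ for the reduced matching. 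Pinning down this exceptional analysis uniformly, while keeping everything inside the $3n-8$ bound, is the step I expect to be the most delicate.
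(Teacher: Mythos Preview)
This theorem is not proved in the present paper at all: it is quoted verbatim from \cite{29} and used only as the base case of the induction for Theorem~\ref{mosttheorem}. So there is no ``paper's own proof'' to compare your proposal against.

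That said, your outline is very much in the spirit of how the paper proves the stronger Theorem~\ref{mosttheorem} (and presumably how \cite{29} proves the cited result): pick a direction $d$ with few crossing $M$-edges, apply the inductive hypothesis to the heaviest subcube $Q[0]$, and then extend the resulting Hamiltonian cycle across the remaining layers using auxiliary spanning-path and spanning $2$-path lemmas. Two remarks on your plan. First, the splicing is not done by independently building Hamiltonian cycles in every $Q^{i}$ and then merging them; rather one builds the cycle in the heavy layer and then absorbs the remaining layers one at a time via Lemma~\ref{hamilpath2}-type statements (spanning $x,y$-paths through small matchings with $xy$ an edge), which is exactly the ``path version'' strengthening you anticipate needing. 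Second, your ``concentrated case'' is the real issue and you have not resolved it: when $|M_0|$ exceeds the inductive budget by one or two edges, the paper's method (for Theorem~\ref{mosttheorem}) is not to split along a different direction but to drop one or two edges of $M_0$, get a Hamiltonian cycle through the rest by induction, and then repair the missing edges by rerouting through the adjacent layers --- this is the content of Claims~2--4 in Section~4, and it requires a battery of spanning $2$-path lemmas rather than a structural analysis of where $M$ lives. Your alternative idea of making all of $M$ into crossing edges would force you to thread a Hamiltonian cycle through up to $3n-8$ prescribed vertical edges between two fixed layers, which is a different (and not obviously easier) problem.
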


In this paper, we consider the problem of embedding Hamiltonian cycles in the $k$-ary $n$-cubes with larger matchings, and obtain the following main result.

\begin{thm}\label{mosttheorem} For $n\geq5$ and $k\geq4$, if $M$ is a matching in $Q_n^{k}$ with $|M|\leq4n-20$, then $M$ is contained in a Hamiltonian cycle in $Q_n^{k}$.
\end{thm}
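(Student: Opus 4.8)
The natural strategy is induction on $n$, using the standard decomposition of $Q_n^k$ along one coordinate into $k$ disjoint copies $Q_0, Q_1, \ldots, Q_{k-1}$ of $Q_{n-1}^k$, where $Q_i$ consists of all vertices whose last coordinate is $i$. Between consecutive layers $Q_i$ and $Q_{i+1}$ (indices mod $k$) there is a perfect matching of "external" edges. The base case $n=5$ must be handled separately, presumably by combining Theorem~\ref{maintheorem} (which gives us matchings of size up to $3n-8$) with an ad hoc argument to cover the gap up to $4n-20$; for $n=5$ we have $4n-20=0$, so in fact the base case is vacuous, and the first substantive case is $n=6$ with $|M|\leq 4$, which is already covered by Theorem~\ref{maintheorem} since $3n-8=10\geq 4$. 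This means the genuine work begins once $4n-20 > 3n-8$, i.e. $n\geq 13$; for $5\leq n\leq 12$ the result already follows from Theorem~\ref{maintheorem}. So really one only needs the inductive step for large $n$.

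Given a matching $M$ with $|M|\leq 4n-20$ in $Q_n^k$, I would first choose the splitting coordinate wisely. Writing $M_i$ for the edges of $M$ lying inside $Q_i$ and $M^{ext}$ for the edges of $M$ joining different layers, a counting/averaging argument over the $n$ possible splitting directions should let me pick a direction so that the edges are spread out: in particular so that each $|M_i|$ is small enough to apply the induction hypothesis inside $Q_i$ (needing $|M_i|\leq 4(n-1)-20 = 4n-24$), and so that the external edges $M^{ext}$ are few and, after possibly re-indexing the layers cyclically, concentrated across only one or two of the $k$ inter-layer matchings. Since $k\geq 4$ and $k^n$ is large, there is a lot of room. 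The plan is then: (i) inside each $Q_i$ build a Hamiltonian path (not cycle) whose endpoints are chosen to be compatible with the external edges we must use, invoking the linear-forest result Theorem~\ref{forest} or the induction hypothesis as appropriate — note Theorem~\ref{forest} lets us prescribe a linear forest of up to $2(n-1)-1$ edges in a Hamiltonian cycle of $Q_{n-1}^k$, and deleting one cycle edge turns it into a Hamiltonian path through a prescribed linear forest, which is exactly what we need to also force the right endpoints; (ii) splice these $k$ Hamiltonian paths together using external edges (the ones in $M$ must be used, and we get to choose the rest freely from the perfect matchings between layers) into one Hamiltonian cycle of $Q_n^k$ containing all of $M$.

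The main obstacle, as is typical for these matching-extension arguments, is the bookkeeping at the junctions: when an external edge of $M$ is incident to a vertex $v\in Q_i$, that vertex must be an endpoint of the Hamiltonian path we build in $Q_i$, and moreover $v$ must not simultaneously be required to be an internal vertex of an edge of $M_i$ — so we need the forced endpoints and the forced internal edges to be mutually consistent, and we need at most two forced endpoints per layer. Bounding the number of "bad" splitting directions (those where some $|M_i|$ is too large, or too many external edges hit one layer, or the forced-endpoint conditions clash) and showing this number is less than $n$ is the crux; the constant $4n-20$ is presumably exactly what makes this averaging work with a little slack. A secondary nuisance is that some layers may receive no external edge of $M$ at all, in which case we have complete freedom in that layer and simply need a Hamiltonian path through $M_i$ with two endpoints that can be linked by free external edges into the emerging cycle; and one must be careful, when two external edges of $M$ are incident to the same layer, that they can be realized as the two "ends" of a single Hamiltonian path there, which again reduces to the linear-forest theorem applied to $M_i$ together with those two pendant external-edge stubs. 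I would organize the proof by first proving a technical lemma that packages "Hamiltonian path in $Q_{n-1}^k$ through a given linear forest with prescribed endpoints" under suitable size and compatibility hypotheses, and then running the layer-splicing argument on top of it.
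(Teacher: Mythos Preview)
Your overall framework (induction on $n$, base cases $n\leq 12$ via Theorem~\ref{maintheorem}, splitting along a coordinate) matches the paper's. But there is a genuine gap in your inductive step.

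You write that an averaging argument over the $n$ splitting directions will let you choose $d$ so that every layer satisfies $|M_i|\leq 4(n-1)-20=4n-24$, allowing the induction hypothesis in each $Q_i$. This is false. Take all edges of $M$ to be $1$-edges whose vertices agree in positions $2,\ldots,n$ (possible for large $k$, since these vertices form a $k$-cycle). Then for every direction $d\in\{2,\ldots,n\}$ the entire matching lands in a single layer, so $|M_i|=|M|=4n-20>4n-24$ for that $i$; and for $d=1$ every edge is external. No direction spreads the matching across layers. More generally, averaging controls the number of external edges (the paper uses this to get $|M\cap E_d|\leq 3$), but it cannot control how the internal edges distribute among the $k$ layers.

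The paper handles exactly this obstruction, and it is the heart of the argument. It picks $d$ with $|M\cap E_d|\leq 3$, lets $Q[0]$ be the densest layer (so automatically $|M_i|\leq 2n-10$ for $i\geq 1$), and then splits according to the value of $|M_0|\in\{4n-24,\ldots,4n-20\}$. When $|M_0|\leq 4n-24$ the induction hypothesis gives a Hamiltonian cycle $C_0\supseteq M_0$ in $Q[0]$, and Claim~1 splices it through the remaining layers. When $|M_0|$ exceeds this by $r\in\{1,2,3,4\}$, the paper removes $r$ edges of $M_0$, applies induction to get $C_0$ through $M_0$ minus those edges, and then Claims~2--4 (plus an ad hoc argument for $r=4$) repair the missing edges by rerouting through $Q[1],\ldots,Q[k-1]$. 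This repair is where the eight technical lemmas of Section~3 (spanning paths and spanning $m$-paths with prescribed small matchings and endpoints) are consumed; your proposed single ``Hamiltonian path through a linear forest with prescribed endpoints'' lemma would not suffice, since Theorem~\ref{forest} caps the forest at $2n-3$ edges, well below $4n-20$. The asymmetric treatment of the dense layer versus the sparse layers, and the edge-removal-plus-repair mechanism, are the missing ideas.
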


\section{Basic definitions and results}
The vertex set and edge set of a graph $G$ are denoted by $V(G)$ and $E(G)$, respectively. Let $F\subseteq E(G)$ and $U\subseteq V(G)$, and let $H,H'$ be two subgraphs of $G$. The notation $V(F)$ represents the set of all vertices incident with edges in $F$. Denote $G-F$ as the resulting graph by deleting from $G$ the edges in $F$, and denote $G-U$ as the resulting graph by deleting from $G$ the vertices in $U$ together with all the edges incident with $U$. Let $H+H'$ represent the graph with the vertex set $V(H)\cup V(H')$ and edge set $E(H)\cup E(H')$, and let $H+F$ denote the graph with the vertex set $V(H)\cup V(F)$ and edge set $E(H)\cup F$.

In a graph $G$, the distance between vertices $u$ and $x$ is denoted by $d_G(u,x)$, and the distance of a vertex $u$ and an edge $xy$ is defined by $d_G(u,xy)=\min\{d_G(u,x),d_G(u,y)\}$. For a set $M\subseteq E(G)$, we say that a subgraph $H$ of $G$ {\it passes through} $M$, if $E(M)\subseteq E(H)$ and $V(M)\subseteq V(H)$.

A {\it $u,v$-path} is a path with endpoints $u$ and $v$, denoted by $P_{u,v}$ when we specify a particular such path. A spanning subgraph of $G$ is called a {\it spanning $m$-path} if it consists of $m$ vertex-disjoint paths. A spanning 1-path is a spanning or Hamiltonian path. Let $P_{x,y}=x,\ldots,v_i,\ldots,v_j,\ldots,y$ be a path. The notation $P_{x,y}[v_i,v_j]$ denotes the subpath $v_i,\ldots,v_j$ of $P_{x,y}$ joining $v_i$ and $v_j$. For $u\in V(P_{x,y})\setminus\{x\}$, a neighbor $u'$ of $u$ on $P_{x,y}$ is called {\it closer to} $x$ than $u$ if $u'$ lies on the subpath $P_{x,y}[x,u]$.

Note that $Q_n^{k}$ is bipartite if and only if $k$ is even. When $k$ is even, the {\it parity} of a vertex $u=\alpha_1\cdots\alpha_n$ in $Q_n^{k}$ is defined by $p(u)=\sum_{i=1}^{n}\alpha_i$ (mod 2), and a set $\{u_{1},u_{2},\ldots,u_{2n-1},u_{2n}\}$ of distinct vertices is $balanced$ in $Q_n^{k}$ if the number of odd vertices equals to the number of even vertices. For even $k$, $p(u)\neq p(v)$ if $uv\in E(Q_n^{k})$, and the condition ``$p(u)\neq p(v)$'' is necessary for the existence of a spanning $u,v$-path in $Q_n^{k}$.

Next, let us introduce several lemmas.

\begin{Lemma}\cite{33}\label{hamilpath} For $n\geq2$ and odd $k\geq3$, let $U\subseteq V(Q_n^{k})$ with $|U|\leq2n-3$. If $x,y\in V(Q_n^{k})\setminus U$ are distinct, then there is a spanning $x,y$-path in $Q_n^{k}-U$.
\end{Lemma}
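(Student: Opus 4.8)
The plan is to prove the lemma by induction on $n$, decomposing $Q_n^k$ along one coordinate into $k$ disjoint copies of $Q_{n-1}^k$ and stitching them together through cross edges. For the base case $n=2$ we have $|U|\le 1$, and the claim reduces to showing that the $k\times k$ torus $Q_2^k$, as well as the torus with a single vertex removed, is Hamiltonian-connected between any two prescribed vertices when $k$ is odd; this I would verify by explicit ``snake'' traversals of the torus, the oddness of $k$ ensuring that no bipartite parity obstruction arises. It is worth noting that ``$k$ odd'' is exactly the hypothesis that makes the statement plausible: for even $k$ the cube is bipartite and a spanning $x,y$-path forces $p(x)\ne p(y)$, so the unrestricted form can hold only in the non-bipartite case.

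For the inductive step I fix a coordinate $j$ and write $Q_n^k=Q^0\cup\cdots\cup Q^{k-1}$, where $Q^i$ is induced by the vertices whose $j$-th coordinate equals $i$; each $Q^i\cong Q_{n-1}^k$, and consecutive copies $Q^i,Q^{i+1}$ (indices mod $k$) are joined by a perfect matching of cross edges. Putting $U_i=U\cap V(Q^i)$, we have $\sum_i|U_i|\le 2n-3$, and the induction hypothesis applies to a copy as soon as $|U_i|\le 2(n-1)-3=2n-5$. Since the total budget exceeds this threshold by only $2$, most copies are immediately treatable. Alongside the hypothesis I would invoke a companion fault-tolerant Hamiltonicity statement for cycles, namely that $Q_{n-1}^k-U_i$ has a Hamiltonian cycle whenever $|U_i|\le 2(n-1)-2=2n-4$ (established by a parallel induction or drawn from the literature); tolerating one extra fault, it is what lets me absorb an entire copy into a growing path.

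The construction then splits according to the positions of $x$ and $y$. If they lie in the same copy, say $Q^0$, I build a Hamiltonian $x,y$-path in $Q^0-U_0$, select one of its edges $uv$ whose endpoints have healthy neighbours $u',v'$ in $Q^1$, and replace $uv$ by a detour $u,u',\dots,v',v$ threading a Hamiltonian cycle of $Q^1-U_1$; since every cycle edge $u'v'$ projects onto an edge $uv$ of $Q^0$, a compatible detour edge always exists, and iterating this splice around the cyclic order $Q^1,\dots,Q^{k-1}$ swallows every remaining copy, the abundance of cross edges (there are $k^{n-1}$ between neighbouring copies against only a handful of faults) guaranteeing a usable edge at each stage. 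If $x$ and $y$ lie in different copies, I instead route a single path entering each copy once, using the hypothesis to produce spanning paths whose endpoints are placed to meet available cross edges, traversing the copies in the order dictated by the quotient cycle $C_k$. In both cases the freedom to prescribe \emph{both} endpoints in the hypothesis is precisely what aligns the internal paths with the chosen cross edges.

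The main obstacle is the \emph{heavy-copy} case, in which a single $Q^i$ carries more than $2n-5$ faults, possibly all $2n-3$, so that neither the hypothesis nor the cycle companion applies to it. My remedy is to choose the decomposition coordinate so that the faults are spread out: a counting argument shows that if every coordinate admitted a copy holding at least $2n-4$ faults, then in each coordinate at most one fault could lie outside that heavy cluster, so at most $n$ faults could ever leave the clusters while the rest would share a common value in every coordinate and hence coincide in a single vertex, forcing $|U|\le n+1$; as the heavy-copy difficulty only occurs when $|U|\ge 2n-4>n+1$ for $n$ large, a balanced coordinate must exist in that regime, and the remaining small or boundary values of $n$ are settled directly as part of the induction base. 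The residual delicate points, namely ensuring simultaneously that every junction between consecutive copies carries a healthy cross edge joining the prescribed endpoints and that the copy holding $x,y$ still admits its required path after its faults and forced endpoints are imposed, are absorbed by the surplus of cross edges and, where needed, by rerouting endpoints inside a copy via one further application of the hypothesis.
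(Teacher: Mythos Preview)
The paper does not prove this lemma at all: it is quoted from reference~\cite{33} (Yang, Tan, Hsu) and used as a black box, so there is no ``paper's own proof'' to compare against. Your proposal is therefore an attempt to reprove a cited result rather than to match anything in the present paper.

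As a standalone sketch your outline is the standard one for fault-tolerant Hamiltonian connectivity in $k$-ary $n$-cubes: induct on $n$, split along a coordinate into $k$ copies of $Q_{n-1}^k$, build spanning paths inside the copies via the hypothesis, and splice them through cross edges. The counting argument you give for avoiding a heavy copy is correct: if every coordinate had a layer with at least $2n-4$ faults, then at most one fault per coordinate lies outside that layer, so all but at most $n$ faults coincide at a single vertex, giving $|U|\le n+1$, which contradicts $|U|\ge 2n-4$ once $n\ge 6$. What remains genuinely unfinished is the base case $n=2$ (Hamiltonian connectivity of the $k\times k$ torus minus one vertex for odd $k$, which is true but requires a careful snake construction), the small values $n\le 5$ where the counting argument does not bite, and the bookkeeping that guarantees a usable cross edge at every splice when endpoints, faults, and previously chosen junctions all constrain the choice simultaneously. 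These are exactly the places where published proofs of this result do the real work, so your sketch identifies the right architecture but does not yet constitute a proof.
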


\begin{Lemma}\cite{25}\label{hamilpatheven} For $n\geq2$ and even $k\geq4$, if $x,y$ are vertices of different parities in $Q_n^{k}$, then there is a spanning $x,y$-path in $Q_n^{k}$.
\end{Lemma}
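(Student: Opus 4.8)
The necessary condition here is transparent: since $k$ is even, $k^n$ is even and $Q_n^{k}$ is a balanced bipartite graph whose two sides are the even- and odd-parity vertices, so any spanning (Hamiltonian) path must join vertices of different parities. The claim is that this condition is also sufficient. The plan is to prove it by induction on $n$, exploiting the recursive structure of $Q_n^{k}$. Fixing the last coordinate partitions $Q_n^{k}$ into $k$ copies $Q^{[0]},Q^{[1]},\ldots,Q^{[k-1]}$, each isomorphic to $Q_{n-1}^{k}$ and cyclically joined: the vertices of $Q^{[j]}$ and $Q^{[j+1]}$ (indices mod $k$) are linked by a perfect matching that changes only the last coordinate by $\pm1$. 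A vertex and its image in an adjacent copy have opposite parities, so parity differences are preserved under these ``vertical'' moves; I would use this repeatedly to check the parity hypotheses needed to invoke the induction hypothesis. For the base case $n=2$ the graph is the even torus $C_k\,\square\,C_k$, whose Hamiltonian laceability is classical and can be established by an explicit snake-like construction.

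For the inductive step, suppose the result holds for $Q_{n-1}^{k}$ and let $x,y$ be opposite-parity vertices of $Q_n^{k}$. \textbf{Case A: $x$ and $y$ lie in the same copy, say $Q^{[0]}$.} Since parity within a copy agrees with the global parity, $x$ and $y$ have opposite parities in $Q^{[0]}\cong Q_{n-1}^{k}$, so the induction hypothesis yields a Hamiltonian path $P_0$ of $Q^{[0]}$ from $x$ to $y$. I would then splice in the remaining copies by opening one edge of $P_0$: choose consecutive vertices $a,b$ on $P_0$, delete the edge $ab$, and reroute from $a$ up into $Q^{[1]}$, through each copy in turn along the cyclic order, and back down to $b$. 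For each copy $Q^{[j]}$ one selects a portal vertex and applies the induction hypothesis to obtain a Hamiltonian path of $Q^{[j]}$ between the two chosen portals; since $a,b$ have opposite parities and the vertical moves preserve parity differences, the portals can be selected so that every spliced subpath has opposite-parity endpoints. The concatenation is a Hamiltonian path of $Q_n^{k}$ from $x$ to $y$.

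\textbf{Case B: $x\in Q^{[0]}$ and $y\in Q^{[s]}$ with $s\neq0$.} Using the automorphisms that shift and reflect the last coordinate, I may assume $1\le s\le k/2$. When $s=1$ the two copies are adjacent and a single sweep suffices: traverse $Q^{[0]}$ from $x$ to a portal, pass to $Q^{[k-1]}$, and snake through $Q^{[k-1]},Q^{[k-2]},\ldots,Q^{[1]}$, finishing with a Hamiltonian path of $Q^{[1]}$ that ends at $y$, again with portals chosen to respect parity. The non-adjacent case $s\ge2$ is the main obstacle. Because the copies, joined only cyclically, form a cycle $C_k$, and $C_k$ has no Hamiltonian path between non-adjacent vertices, no sweep that visits each copy in a single segment can both start in $Q^{[0]}$ and end in $Q^{[s]}$; at least one copy must be traversed in two separate segments so that the path can double back around the cycle and still terminate at $y$. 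To implement this I would strengthen the induction hypothesis to also provide, in $Q_{n-1}^{k}$, a spanning subgraph consisting of two vertex-disjoint paths with four prescribed, parity-compatible endpoints, i.e.\ a spanning $2$-path with specified ends. Splitting the chosen copy along such a $2$-path lets the Hamiltonian path enter it twice, covering the ``near'' arc $Q^{[1]},\ldots,Q^{[s-1]}$ and the ``far'' arc $Q^{[s+1]},\ldots,Q^{[k-1]}$ and then closing into $Q^{[s]}$ to end at $y$.

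The crux of the argument is therefore twofold: establishing the auxiliary spanning-$2$-path property of $Q_{n-1}^{k}$ (itself provable by an analogous induction), and carrying out the parity bookkeeping for all the portal vertices simultaneously. The parity of the gap $s$ relative to the even modulus $k$ governs how many segments are needed and where the doubling-back occurs, so the non-adjacent case naturally splits into parity subcases; verifying that compatible portals can always be chosen in each subcase is the technical heart of the proof.
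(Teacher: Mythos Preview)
The paper does not prove this lemma; it is quoted from Stewart and Xiang~\cite{25} and used as a black box throughout Section~3. There is therefore no ``paper's own proof'' to compare against.

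As for your sketch itself: the overall architecture (induction on $n$, split along one coordinate into copies $Q^{[0]},\ldots,Q^{[k-1]}$, sweep through the copies, splice via portal vertices with parity bookkeeping) is exactly the standard route to Hamiltonian laceability of these graphs, and your diagnosis of the obstruction in Case~B when $2\le s\le k-2$ is correct: since the quotient cycle $C_k$ has no Hamiltonian path between non-adjacent vertices, some copy must be entered twice, forcing a spanning $2$-path statement in $Q_{n-1}^{k}$. Two points keep this from being a proof rather than a plan. First, you defer the base case $n=2$ to ``classical'' laceability of $C_k\,\square\,C_k$; that is true but nontrivial for arbitrary opposite-parity endpoints and needs either an explicit construction or a citation. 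Second, and more seriously, the auxiliary spanning-$2$-path lemma you invoke is strictly stronger than the statement you are proving, and ``analogous induction'' on it will in general demand a spanning-$3$-path lemma, and so on. The usual remedy is to prove a single strengthened statement (laceability together with the needed $2$-path property, with suitable parity constraints on the four terminals) simultaneously by induction, so that each case of the inductive step only appeals to instances already covered. Your outline gestures at this but does not set it up; until you formulate and carry that joint induction, the non-adjacent subcase of Case~B remains a genuine gap.
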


\begin{Lemma}\cite{29}\label{delete} For $n\geq3$ and even $k\geq4$, if $u,x,y$ are distinct vertices in $Q_n^{k}$ such that $p(u)\neq p(x)=p(y)$, then there is a spanning $x,y$-path in $Q_n^{k}-u$.
\end{Lemma}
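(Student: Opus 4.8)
The plan is to prove the statement by induction on $n$, using the standard decomposition of $Q_n^k$ into $k$ layers along one coordinate. By the coordinate-shift automorphism $\alpha_1\alpha_2\cdots\alpha_n\mapsto(\alpha_1+1)\alpha_2\cdots\alpha_n$ (mod $k$), which reverses the parity of every vertex, I may assume without loss of generality that $p(u)=1$ and $p(x)=p(y)=0$; a quick count then shows $Q_n^k-u$ has exactly one more even vertex than odd vertex, which is precisely the balance that a spanning $x,y$-path with two even ends requires. Since $u,x,y$ are distinct, there is a coordinate $i$ with $x_i\neq y_i$, and I split $Q_n^k$ along coordinate $i$ into layers $L_0,\dots,L_{k-1}$, each isomorphic to $Q_{n-1}^k$, with consecutive layers joined by a perfect matching in the ring order $L_0-L_1-\cdots-L_{k-1}-L_0$. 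Relabelling cyclically, I arrange $u\in L_0$; because $x_i\neq y_i$, the vertices $x$ and $y$ lie in distinct layers, so at most one of them lies in $L_0$.

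For the construction I use two kinds of building blocks. Every \emph{full} layer $L_j$ ($j\neq0$) is a copy of $Q_{n-1}^k$, so by Lemma~\ref{hamilpatheven} it admits a Hamiltonian path between any two vertices of opposite parity; the layer $L_0$ containing $u$ is handled by the inductive hypothesis (or, when $n=3$, by the base case), which supplies a spanning path of $L_0-u$ between any two distinct even vertices $a,b$, these being free connector vertices I get to choose. Once $L_0$ is deleted from the ring, the remaining layers $L_1,\dots,L_{k-1}$ form a \emph{path} arrangement, so I can snake a single Hamiltonian path through their union, entering $L_1$, crossing one matching edge into $L_2$, and so on out through $L_{k-1}$; at each step I choose an exit vertex of the parity opposite to the entry, which is always possible, and the entry/exit parities propagate consistently through the matchings. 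Splicing the $L_0-u$ segment into this structure via the matchings $L_0-L_1$ and $L_0-L_{k-1}$ then yields a single spanning path, and I organise the case analysis according to whether $0$ or exactly one of $x,y$ lies in $L_0$, in each case arranging the connector vertices with the aim of making $x$ and $y$ the two ends.

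The main obstacle is forcing the two ends to be the \emph{prescribed} vertices $x$ and $y$ when they sit in layers far apart in the ring. A block-by-block snake visits each layer once, so the induced walk on the layer-ring $C_k$ is a Hamiltonian path of $C_k$, whose ends must be adjacent layers; but $x$ and $y$ may lie in non-adjacent layers, so this is not directly achievable. I resolve this by covering exactly one suitably chosen full layer in \emph{two} blocks rather than one: cutting that layer's Hamiltonian path at an interior vertex produces a spanning $2$-path of the layer, which lets the covering walk on $C_k$ double back through a single layer and hence terminate in any prescribed layer. Controlling this doubling together with the parities of all connector vertices, and ensuring that the deleted vertex $u$ never obstructs an exit from $L_0$, is the delicate bookkeeping at the heart of the argument.

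Finally, the induction is anchored at $n=2$, since the layer $L_0$ arising in the $n=3$ step is a copy of $Q_2^k$. I would therefore first establish the $n=2$ instance directly in the $k\times k$ torus $Q_2^k$, using vertex-transitivity to place $u$ at the origin and then laying down an explicit boustrophedon-type path filling the torus minus $u$ with the two prescribed even ends. With this base case, the $n=3$ step goes through; and for $n\geq4$ the layer $L_0$ is a copy of $Q_{n-1}^k$ with $n-1\geq3$, so the inductive hypothesis (Lemma~\ref{delete} for $n-1$) applies and the inductive step above completes the proof.
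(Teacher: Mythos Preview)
This lemma is not proved in the present paper; it is quoted verbatim from \cite{29} and used as a black box. There is therefore no ``paper's own proof'' to compare against, and I can only assess your proposal on its merits.

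Your layer-decomposition strategy is the natural one, and for $n\ge4$ the sketch can be made to work: the snake through the full layers uses Lemma~\ref{hamilpatheven}, the punctured layer $L_0-u$ uses the inductive hypothesis, and your ``doubling back'' (one layer covered by a spanning $2$-path so that the layer-walk on $C_k$ may revisit it) is realised by Lemma~\ref{pathpartition3}, whose parity constraints are consistent with the ones your construction forces. The pendant-insertion variant used throughout this paper (replace one edge of a layer's Hamiltonian path by a detour through the missing block via Lemma~\ref{hamilpath2}) would also do the job and is slightly cleaner, but your version is valid.

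The genuine gap is at the bottom of the induction. You anchor at $n=2$, which is \emph{not} part of the stated lemma, so you are committing to prove a strictly stronger claim: that the bipartite $k\times k$ torus $Q_2^k$ is hyper-Hamiltonian-laceable. ``Vertex-transitivity plus an explicit boustrophedon pattern'' is not a proof: once $u$ is pinned to the origin, the two prescribed even endpoints $x,y$ can sit in essentially arbitrary relative positions, and a single snake pattern does not cover all cases. Moreover, your $n=3$ step also relies on the $Q_2^k$ layers, both for the spanning path of $L_0-u$ and for the $2$-path in the doubled-back layer; neither Lemma~\ref{delete} (stated for $n\ge3$) nor Lemma~\ref{pathpartition3} (stated for $n\ge4$, i.e.\ layer dimension $\ge3$) supplies these. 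So as written, the proposal proves the lemma only for $n\ge4$; to complete it you must either give an honest proof of the $Q_2^k$ facts you need, or abandon the $n=2$ anchor and establish $n=3$ directly by a separate argument.
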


\begin{Lemma}\cite{29}\label{hamilpath3} For $n\geq3$ and $k\geq3$, let $x,y$ be distinct vertices in $Q_n^{k}$ and whenever $k$ is even, $p(x)\neq p(y)$. If $uv\in E(Q_n^{k})$ and $\{u,v\}\neq\{x,y\}$, then there is a spanning $x,y$-path passing through $uv$ in $Q_n^{k}$.
\end{Lemma}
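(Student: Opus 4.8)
The plan is to prove the lemma by induction on $n$, using the standard decomposition of $Q_n^k$ along one coordinate into $k$ disjoint copies $Q^0,\ldots,Q^{k-1}$ of $Q_{n-1}^k$ (where $Q^i$ consists of the vertices whose chosen coordinate equals $i$), with consecutive copies $Q^i,Q^{i+1}$ (indices mod $k$) joined by a perfect matching of ``cross'' edges so that the copies form a cyclic chain. Since $u$ and $v$ are adjacent they agree on $n-1\ge 2$ coordinates, so I may take the splitting coordinate to be one on which $u$ and $v$ agree; then $u,v$ lie in a common copy, say $Q^0$, and $uv$ is an edge internal to $Q^0$. The whole point is to build a Hamiltonian path that traverses the cyclic chain of copies while forcing the segment inside $Q^0$ to use the edge $uv$.

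First I dispose of the degenerate cases in which an endpoint coincides with an endpoint of the edge; say $x=u$ (the cases $x=v$, $y=u$, $y=v$ are symmetric). Here I want a path $u,v,\ldots,y$, that is, a spanning $v,y$-path of $Q_n^k-u$ with the edge $uv$ prepended. For odd $k$ this is Lemma~\ref{hamilpath} applied with $U=\{u\}$ (note $1\le 2n-3$), and for even $k$ it is exactly Lemma~\ref{delete}: indeed $uv\in E(Q_n^k)$ gives $p(v)\ne p(u)$, and $p(x)\ne p(y)$ forces $p(v)=p(y)\ne p(u)$, while the hypothesis $\{u,v\}\ne\{x,y\}$ ensures $v\ne y$. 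This settles every case with $\{x,y\}\cap\{u,v\}\ne\emptyset$, so from now on $x,y,u,v$ are four distinct vertices.

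For the generic case I split on how many of $x,y$ lie in $Q^0$. If neither does, I route the Hamiltonian path so that it \emph{enters} $Q^0$ at $u$, runs a Hamiltonian path of $Q^0$ whose first edge is $uv$ (this is precisely the degenerate building block, applied inside $Q^0\cong Q_{n-1}^k$), exits $Q^0$ at the other end, and covers the remaining copies $Q^1,\ldots,Q^{k-1}$ by stringing together Hamiltonian sub-paths of the individual copies—each furnished by Lemma~\ref{hamilpath} or Lemma~\ref{hamilpatheven}—linked by cross edges, with $x$ and $y$ arranged as the two global endpoints. If exactly one of $x,y$, say $x$, lies in $Q^0$, I instead take a Hamiltonian path of $Q^0$ from $x$ through $uv$ to a chosen exit vertex and then chain the other copies to reach $y$. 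If both $x,y\in Q^0$, I apply the induction hypothesis inside $Q^0$ to obtain a Hamiltonian path $P_0$ of $Q^0$ from $x$ to $y$ through $uv$, choose any edge $ab\ne uv$ of $P_0$ with $\{a,b\}\cap\{u,v\}=\emptyset$, and replace it by a detour $a\to a'\to(\text{Hamiltonian path of }Q^1\cup\cdots\cup Q^{k-1})\to b''\to b$, where $a'\in Q^1$ and $b''\in Q^{k-1}$ are cross-neighbours of $a,b$; this absorbs all remaining copies while keeping $x,y$ as endpoints and retaining $uv$.

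The main obstacle is the parity bookkeeping in the assembly for even $k$: every sub-path invoked inside a copy requires its two prescribed endpoints to have opposite parity (the condition behind Lemmas~\ref{hamilpatheven} and~\ref{delete}), and each cross edge used to hop between copies flips the parity, so I must choose the entry and exit vertices in each copy, and the positions at which the chain crosses between copies, so that all these constraints hold simultaneously and the chosen vertices remain distinct; showing that enough valid choices exist—using that each copy has $k^{\,n-1}\ge 16$ vertices—is the technical heart of the argument. The remaining loose end is the base case $n=3$, where recursion would land in $Q_2^k$ and Lemma~\ref{delete} (valid only for $n\ge 3$) no longer applies to supply the ``path beginning with $uv$'' inside a copy; I would treat $n=3$ directly, again separating on the positions of $x,y$ relative to the copy containing $uv$ and using Lemma~\ref{hamilpath}/\ref{hamilpatheven} on the $Q_2^k$ copies together with an explicit handling of the single copy that must absorb the edge $uv$.
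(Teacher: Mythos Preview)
The paper does not prove this lemma at all: it is quoted verbatim from reference~\cite{29} (Wang and Zhang, \emph{Inform.\ Sci.}\ 305 (2015) 1--13), as indicated by the \texttt{\textbackslash cite\{29\}} in the lemma header. There is therefore no ``paper's own proof'' to compare against.

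Your proposed approach is the standard one for results of this type and is broadly sound in outline. Two remarks. First, your degenerate-case reduction is clean and correct. Second, you correctly identify the real work as (a) the parity-and-distinctness bookkeeping when chaining Hamiltonian paths across the copies $Q^1,\ldots,Q^{k-1}$, and (b) the base case $n=3$, where the copies are $Q_2^k$ and Lemma~\ref{delete} is unavailable. You have not actually carried out either of these; in particular, for $n=3$ you would need to produce, inside a single $Q_2^k$, a Hamiltonian path starting with the prescribed edge $uv$ and ending at a vertex of the right parity---this is doable by hand (a $Q_2^k$ is just a $k\times k$ torus), but it is not a one-liner and is precisely where a referee would want to see the details. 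Until those two pieces are written out, the proposal is a plan rather than a proof.
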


\begin{Lemma}\cite{29}\label{pathpartition2} For $n\geq3$ and $k\geq3$, let $M$ be a matching in $Q_n^{k}$ with $|M|\leq\max\{0,2n-7\}$. If $uu'$ and $vv'$ are two disjoint edges in $Q_n^{k}$ such that $\{u,v\}\cap V(M)=\emptyset$ and $u'v'\notin M$, then there is a spanning 2-path $P_{u,u'}+P_{v,v'}$ passing through $M$ in $Q_n^{k}$.
\end{Lemma}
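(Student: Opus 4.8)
The plan is to argue by induction on $n$, slicing $Q_n^{k}$ into $k$ disjoint copies of $Q_{n-1}^{k}$ along a carefully chosen dimension and routing the two paths through these copies. Throughout, I will keep the two structural hypotheses in view: the condition $u'v'\notin M$ is necessary because $u'$ and $v'$ are to be endpoints of \emph{different} paths, so no edge of the spanning 2-path may join them; and $\{u,v\}\cap V(M)=\emptyset$ keeps the degree-one endpoints $u,v$ free of forced incident edges. For even $k$ I will also track parities: since $uu'$ and $vv'$ are edges we have $p(u)\neq p(u')$ and $p(v)\neq p(v')$, so each desired path already joins vertices of opposite parity, and every dimension-$d$ (crossing) edge flips parity, which is exactly what the splicing steps will need.

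\textbf{Base case ($n=3$).} Here $\max\{0,2n-7\}=0$, so $M=\emptyset$ and the task is to produce a spanning 2-path $P_{u,u'}+P_{v,v'}$ in $Q_3^{k}$ for two disjoint edges $uu',vv'$. I would prove this by slicing $Q_3^{k}$ into $k$ copies of $Q_2^{k}$, placing $u,u'$ (and $v,v'$) inside one or two copies according to the dimensions of the two edges, building spanning paths inside the individual copies by Lemma \ref{hamilpatheven} (even $k$) or Lemma \ref{hamilpath} with $U=\emptyset$ (odd $k$), and stitching consecutive copies together along crossing edges so that the two resulting global paths have the prescribed endpoints. The only care needed is the parity/balance check for even $k$, which is automatic from the edge hypotheses noted above.

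\textbf{Inductive step ($n\geq4$).} Let $a,b$ be the dimensions of the edges $uu'$ and $vv'$. Since $n\geq3$, I can choose a splitting dimension $d\notin\{a,b\}$, so that $u,u'$ lie in a common copy $Q^s$ and $v,v'$ lie in a common copy $Q^t$; among the admissible choices I will pick one that also keeps the internal matching controlled. The argument then splits on $s=t$ versus $s\neq t$. When $s=t$ (all four endpoints in one copy $Q^s$), I apply the induction hypothesis to $Q^s$ with the sub-matching $M_s=M\cap E(Q^s)$ — legitimate provided $|M_s|\le 2(n-1)-7=2n-9$ — to obtain a spanning 2-path of $Q^s$ through $M_s$ with the correct endpoints, and then successively absorb each remaining copy $Q^i$ by deleting a suitable edge of one of the two current paths and re-routing it through a spanning path of $Q^i$ (supplied by Lemma \ref{hamilpatheven}/\ref{hamilpath}, or by Lemma \ref{hamilpath3} when $Q^i$ carries a matching edge) via a matched pair of crossing edges. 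When $s\neq t$, I instead grow $P_{u,u'}$ and $P_{v,v'}$ outward from $Q^s$ and $Q^t$, using crossing edges to sweep through the copies in between and invoking Lemma \ref{delete} in the boundary copies where a single vertex must be avoided.

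The main obstacle I anticipate is the simultaneous bookkeeping in the inductive step rather than any single construction. Concretely, I must choose $d$ so that (i) the endpoints land as required, (ii) any copy receiving the induction hypothesis inherits at most $2n-9$ internal matching edges — the budget drops by exactly $2$ per level, so I must exploit either that the matching spreads across several copies or that some edges can be forced onto crossing positions by aligning $d$ with their dimension — and (iii) each copy handled only by the spanning-path lemmas carries at most one matching edge, so that Lemma \ref{hamilpath3} suffices there. Verifying that such a $d$ always exists from $|M|\le 2n-7$ and $k\ge3$, and then checking in every case that the crossing edges used for splicing are pairwise disjoint, respect parity for even $k$, and never recreate the forbidden configuration $u'v'\in M$, is the delicate part; I expect this to require a short but careful sub-case analysis on how $M$, $uu'$ and $vv'$ are distributed among the $k$ copies, with the smallest instances ($n=4$, where the inherited matching must be empty) needing separate attention.
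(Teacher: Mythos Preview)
This lemma is quoted from \cite{29}; the present paper does not prove it, so there is no in-paper argument to compare your proposal against.

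Judged on its own, your outline is the standard slice-and-stitch induction used throughout this literature and is the right shape. The substantive issue is the one you yourself flag at the end. When all four endpoints land in a single copy $Q^{s}$, the inductive call needs $|M_{s}|\le 2(n-1)-7=2n-9$, but the global bound $|M|\le 2n-7$ can leave up to two surplus matching edges in $Q^{s}$ regardless of which admissible $d$ you pick; there is no counting argument from $|M|\le 2n-7$ and $k\ge 3$ alone that forces some $d$ to shed those two edges. The actual proof in \cite{29} (and the analogous Claims~2--4 in Section~4 of this paper for the Hamiltonian-cycle version) handles this overflow not by a cleverer choice of $d$ but by a separate construction: apply induction to $M_{s}$ minus one or two edges, then repair the missing matching edges by rerouting through crossing edges into the neighbouring copies. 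Your proposal names this obstacle but does not supply the repair step, so as written it is a plausible plan rather than a proof; filling in that overflow case is where the real work lies.
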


Let $d\in\{1,\ldots,n\}$. An edge in $Q_n^{k}$ is an {\it $d$-edge} if its endpoints differ in the $d$th position. Let $E_d$ denote the set of all $d$-edges in $Q_n^{k}$. Let $Q_{n-1}^{k,d}[i]$ be the $(n-1)$-dimensional subcube of $Q_n^{k}$ induced by all the vertices with the $d$th position being $i$ where $i\in\{0,1,\ldots,k-1\}$, abbreviated as $Q[i]$. Obviously, $Q_n^{k}-E_d=Q[0]+\cdots+Q[k-1]$. We say that $Q_n^{k}$ splits into $(n-1)$-dimensional subcubes $Q[0],\ldots,Q[k-1]$ by $E_d$. For every vertex $u_i=\alpha_1\cdots\alpha_{d-1}i$ $\alpha_{d+1}\cdots\alpha_n$ in $Q[i]$, let $u_j$ represent the vertex $\alpha_1\cdots\alpha_{d-1}j$ $\alpha_{d+1}\cdots\alpha_n$ in $Q[j]$ where $0\leq j\leq k-1$.

In the following definitions, they are all defined under the premise of modulo $k$. Let $p,q$ be two integers with $0\leq p\leq q\leq k-1$. Denote $E_d(p,p+1)$ as the set of all $d$-edges joining $Q[p]$ and $Q[p+1]$. Let $E_d(p,q)=\bigcup_{i=p}^{q-1}E_d(i,i+1)$ when $p<q$, and let $E_d(p,q)=\emptyset$ when $p=q$. Let $Q_{n-1}^{k,d}[p,q]$, abbreviated as $Q[p,q]$, represent the graph with the vertex set $\bigcup_{i=p}^{q}V(Q[i])$ and edge set $(\bigcup_{i=p}^{q}E(Q[i]))\cup E_d(p,q)$. Then $Q[p,p]=Q[p]$ and $Q[0,k-1]=Q_n^{k}-E_d(k-1,0)$. Given $M\subseteq E(Q_n^{k})$, let $M[p,q]=M\cap E(Q[p,q])$, and we briefly write $M[p,p]$ as $M_p$.

\begin{Lemma}\cite{29}\label{hamilpath1} For $n\geq3$ and $k\geq3$, if $x,y$ are distinct vertices in $Q_{n-1}^{k,d}[p,q]$ and $p(x)\neq p(y)$ when $k$ is even, then there is a spanning path $P_{x,y}$ in $Q[p,q]$ such that $E(P_{x,y})\cap E(Q[q])$ forms a spanning path or 2-path in $Q[q]$.
\end{Lemma}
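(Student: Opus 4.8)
The plan is to induct on the number of subcubes in the range, i.e.\ on $q-p$. In the base case $p=q$ we have $Q[p,q]=Q[q]$, so I need only a spanning $x,y$-path in the $(n-1)$-cube $Q[q]$, and then its entire edge set lies in $Q[q]$ as a spanning $1$-path, which is certainly a spanning path or $2$-path. Such a path is supplied by Lemma~\ref{hamilpath} (taking $U=\emptyset$, admissible since $0\leq2(n-1)-3$ for $n\geq3$) when $k$ is odd, and by Lemma~\ref{hamilpatheven} when $k$ is even, where the required hypothesis $p(x)\neq p(y)$ is part of the assumption.

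For the inductive step with $p<q$ I would peel off the last subcube $Q[q]$, reducing to $Q[p,q-1]$, and distinguish three cases by the locations of $x$ and $y$. The parity fact I will lean on for even $k$ is that traversing a $d$-edge flips parity, so projecting a within-subcube edge of $Q[q]$ down to $Q[q-1]$ (or vice versa) preserves the relation ``its two endpoints have different parity''. In Case~1, with $x,y\in V(Q[q])$, I would take a spanning $x,y$-path of $Q[q]$, delete one of its edges $u_qw_q$ to leave a spanning $2$-path, project to $u_{q-1},w_{q-1}$ (whose parities differ for even $k$), invoke the induction on $Q[p,q-1]$ for a spanning $u_{q-1},w_{q-1}$-path, and splice it in through the $d$-edges $u_qu_{q-1}$ and $w_qw_{q-1}$; the edges of the result inside $Q[q]$ are then exactly the spanning $2$-path. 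In Case~2, with $x\in V(Q[q])$ and $y\in V(Q[p,q-1])$, I would pick $u_q\in V(Q[q])$ of parity opposite to $x$ (for even $k$) whose projection satisfies $u_{q-1}\neq y$, take a Hamiltonian path of $Q[q]$ from $x$ to $u_q$, check that $p(u_{q-1})=p(x)\neq p(y)$, and attach a spanning $u_{q-1},y$-path of $Q[p,q-1]$ from the induction through $u_qu_{q-1}$; the edges inside $Q[q]$ form a spanning $1$-path.

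The remaining Case~3 has $x,y\in V(Q[p,q-1])$. Here I would first apply the induction to get a spanning $x,y$-path $P'$ of $Q[p,q-1]$ whose edges inside $Q[q-1]$ form a spanning path or $2$-path; this edge set is nonempty, so $P'$ contains some within-$Q[q-1]$ edge $a_{q-1}b_{q-1}$. Projecting up to $a_q,b_q\in V(Q[q])$ (of different parities for even $k$) and taking a Hamiltonian path of $Q[q]$ from $a_q$ to $b_q$, I would replace the edge $a_{q-1}b_{q-1}$ of $P'$ by the detour $a_{q-1},a_q,\dots,b_q,b_{q-1}$, producing a spanning $x,y$-path of $Q[p,q]$ whose edges inside $Q[q]$ form a spanning $1$-path. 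Note that this is the only case that genuinely needs the inductive ``spanning path or $2$-path in $Q[q-1]$'' conclusion, namely to guarantee an attachable within-subcube edge.

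The genuinely routine parts are the existence of the auxiliary spanning and Hamiltonian paths inside the $(n-1)$-cubes (Lemmas~\ref{hamilpath} and~\ref{hamilpatheven}) and the verification that each spliced object is a single path rather than several pieces. I expect the main obstacle to be the parity bookkeeping for even $k$: in every case I must confirm that the pair of endpoints handed to the induction on $Q[p,q-1]$, and the pair handed to the auxiliary path in $Q[q]$, really do have opposite parity, which is precisely where the ``$d$-edges flip parity'' observation gets used. The only other care needed is to keep all chosen connecting vertices distinct (for instance $u_{q-1}\neq y$ in Case~2), which is always arrangeable because an $(n-1)$-cube with $n\geq3$ and $k\geq3$ contains many vertices of each parity.
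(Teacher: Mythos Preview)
The paper does not prove this lemma; it is quoted from \cite{29} and stated without proof, so there is no in-paper argument to compare against. Your inductive proof on $q-p$ is sound and is the natural approach: the base case is exactly Lemma~\ref{hamilpath}/\ref{hamilpatheven} applied to the single $(n-1)$-cube $Q[q]$, and each of your three cases correctly reduces to the inductive hypothesis on $Q[p,q-1]$ together with a spanning path in $Q[q]$, with the parity bookkeeping (that a $d$-edge flips parity) handled as you describe. The only point worth making explicit in Case~3 is that the spanning path or $2$-path in $Q[q-1]$ guaranteed by induction has at least $k^{n-1}-2\geq 7$ edges, so the auxiliary edge $a_{q-1}b_{q-1}$ certainly exists; you note this implicitly, and the rest is routine.
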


\begin{Lemma}\cite{29}\label{hamilpath2} For $n\geq3$ and $k\geq3$, let $M$ be a matching in $Q_{n-1}^{k,d}[p,q]$ such that $|M_i|\leq2n-4$ for every $i\in\{p,\ldots,q\}$ and $|M\cap E_d(i,i+1)|\leq1$ for every $i\in\{p,\ldots,q-1\}$ when $q>p$. If $xy\in E(Q[p])\setminus M$, then there is a spanning $x,y$-path $P_{x,y}$ passing through $M$ in $Q[p,q]$ such that $E(P_{x,y})\cap E(Q[q])$ forms a spanning path in $Q[q]$ and $E(P_{x,y})\cap E(Q[p])$ forms a spanning 2-path in $Q[p]$.
\end{Lemma}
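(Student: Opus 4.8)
The plan is to induct on $q-p$, with the single crucial tool being Theorem \ref{forest}. The point is that each $|M_i|\le 2n-4=2(n-1)-2$, so in the $(n-1)$-dimensional subcube $Q[i]=Q_{n-1}^{k}$ the matching $M_i$ together with one extra edge is still a linear forest of size at most $2(n-1)-1$, and hence lies on a Hamiltonian cycle of $Q[i]$. This is precisely what lets me absorb per-subcube matchings that are too large for the 2-path Lemma \ref{pathpartition2} (which only reaches $2(n-1)-7$). Parity never interferes, since Theorem \ref{forest} embeds linear forests in Hamiltonian cycles with no parity hypothesis.

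For the base case $q=p+1$, I would build $P_{x,y}$ in the shape $x\leadsto a$ (inside $Q[p]$), the $d$-edge $aa_{p+1}$, a Hamiltonian path $a_{p+1}\leadsto b_{p+1}$ of $Q[p+1]$, the $d$-edge $b_{p+1}b$, then $b\leadsto y$ (inside $Q[p]$). First apply Theorem \ref{forest} to the linear forest $M_p\cup\{xy\}$ in $Q[p]$ to get a Hamiltonian cycle $C$ containing $xy$ and all of $M_p$; deleting $xy$ and a second cycle-edge $ab\notin M_p$ splits $C$ into the required spanning 2-path $\{x\leadsto a,\ b\leadsto y\}$ of $Q[p]$ through $M_p$. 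Since $ab\in E(C)\subseteq E(Q[p])$, the copies $a_{p+1},b_{p+1}$ are adjacent in $Q[p+1]$, so applying Theorem \ref{forest} to $M_{p+1}\cup\{a_{p+1}b_{p+1}\}$ and deleting $a_{p+1}b_{p+1}$ yields the Hamiltonian path of $Q[p+1]$ through $M_{p+1}$. A forced edge $cc_{p+1}\in M\cap E_d(p,p+1)$ is handled by choosing the second deleted edge $ab$ to be incident to $c$, so that the forced $d$-edge becomes one of the two transitions.

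For the inductive step $q>p+1$, I would apply the induction hypothesis to $Q[p,q-1]$ (whose matching conditions are inherited) to obtain a spanning $x,y$-path $P'$ through $M[p,q-1]$ whose restriction $R$ to $Q[q-1]$ is a Hamiltonian path and whose restriction to $Q[p]$ is a spanning 2-path. I then select an edge $ab\in E(R)\setminus M_{q-1}$ and reroute: replace $ab$ by $a\,a_q$, a Hamiltonian path $a_q\leadsto b_q$ of $Q[q]$ through $M_q$ (produced once more by Theorem \ref{forest} on $M_q\cup\{a_qb_q\}$), and $b_q\,b$. The outcome is a single $x,y$-path through $M$ whose $Q[q]$-restriction is a Hamiltonian path (as required for the new top subcube), whose $Q[q-1]$-restriction has become a spanning 2-path (now unconstrained, as $Q[q-1]$ is interior), and whose $Q[p]$-restriction is the unchanged spanning 2-path. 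A forced edge $cc_q\in M\cap E_d(q-1,q)$ is accommodated by taking $ab$ among the (at most two) $R$-edges at $c$.

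The main obstacle is the simultaneous selection of the gluing edge $ab$: it must avoid $M_{q-1}$ (so no required edge of $R$ is deleted), its copy $a_qb_q$ must avoid $M_q$ (so that $M_q\cup\{a_qb_q\}$ really is a linear forest on which Theorem \ref{forest} applies), and when a $d$-edge of $M$ is forced it must also be incident to the prescribed vertex $c$. Generically there is ample room, but in the worst case both $R$-edges at $c$ are blocked, one by $M_{q-1}$ and the other through its $M_q$-copy, or $c$ is an endpoint of $R$ with only one incident edge. Resolving these boundary configurations, together with the analogous ones in the base case, by a local rerouting or an alternative choice of the lifted segment is the technical heart of the argument; the rest follows cleanly from the repeated use of Theorem \ref{forest}.
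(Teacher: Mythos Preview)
This lemma is quoted from \cite{29} and the present paper gives no proof of it, so there is no in-paper argument to compare your proposal against. That said, your inductive scheme is the natural one and is correct; a few remarks sharpen it.

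First, the ``main obstacle'' you flag does not actually occur. If $cc_{q}\in M\cap E_d(q-1,q)$ (respectively $cc_{p+1}\in M\cap E_d(p,p+1)$ in the base case), then because $M$ is a matching the vertex $c$ is incident to no edge of $M_{q-1}$ and $c_q$ is incident to no edge of $M_q$. Hence \emph{every} $R$-edge at $c$ lies outside $M_{q-1}$, and its copy in $Q[q]$ lies outside $M_q$; no local rerouting is ever needed. The only residual care in the base case is when $c\in\{x,y\}$: then one of the two cycle-edges at $c$ is $xy$ itself, and you simply take the other (which is admissible for the same reason). In the inductive step $c$ may be an endpoint of $R$, but its unique $R$-edge is still admissible by the same matching argument.

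Second, your size check for Theorem~\ref{forest} is exactly right: $|M_i|\le 2n-4=2(n-1)-2$, so $M_i$ together with one extra edge has size at most $2(n-1)-1$ and is a linear forest in $Q[i]\cong Q_{n-1}^{k}$ (adding one edge to a matching cannot create a cycle or a degree-$3$ vertex). This is indeed the point that makes the lemma go through with the large bound $2n-4$, where Lemma~\ref{pathpartition2} would not suffice.

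Finally, you start the induction at $q=p+1$. The degenerate case $q=p$ (where the two structural conclusions on $Q[p]$ and $Q[q]$ collapse) is handled directly by applying Theorem~\ref{forest} to $M_p\cup\{xy\}$ and deleting $xy$ from the resulting Hamiltonian cycle; you may want to say this explicitly, since the paper does invoke the lemma on single subcubes.
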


\begin{Lemma}\cite{29}\label{pathpartition3} For $n\geq4$ and $k\geq3$, let $x,y,u,v$ be distinct vertices in $Q_{n-1}^{k,d}[p,q]$ and whenever $k$ is even, $p(x)\neq p(y)$ and $p(u)\neq p(v)$. If $x,y,u,v\in V(Q[p])$ or $x,y\in V(Q[p])$ and $u,v\in V(Q[q])$, then there is a spanning 2-path $P_{x,y}+P_{u,v}$ in $Q[p,q]$.
\end{Lemma}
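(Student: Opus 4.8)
The plan is to prove Lemma~\ref{pathpartition3} by separating the two hypotheses, treating the second (the two distinct layers) as a short direct argument and reducing the first (all four terminals in $Q[p]$) to a single-cube statement that is the real content. Note first that when $q=p$ the two hypotheses coincide, so the genuinely multi-layer ``$x,y\in Q[p]$ and $u,v\in Q[q]$'' situation is the case $q>p$.

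For that two-block case, I would split $Q[p,q]$ into the single layer $Q[p]$ and the block $Q[p+1,q]$. Take $P_{x,y}$ to be a spanning path of $Q[p]=Q_{n-1}^{k}$: this is supplied by Lemma~\ref{hamilpatheven} when $k$ is even (its parity requirement is exactly the hypothesis $p(x)\neq p(y)$) and by Lemma~\ref{hamilpath} with $U=\emptyset$ when $k$ is odd. Take $P_{u,v}$ to be a spanning path of $Q[p+1,q]$ from $u$ to $v$ via Lemma~\ref{hamilpath1}, again using $p(u)\neq p(v)$ for even $k$. Since $V(Q[p])$ and $V(Q[p+1,q])$ are disjoint, these two paths are vertex-disjoint and together span $Q[p,q]$.

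For the all-in-$Q[p]$ case with $q>p$, I would reduce to the single-cube case $q=p$ by absorption. Given a spanning 2-path $P_{x,y}+P_{u,v}$ of the single cube $Q[p]$, pick any edge $ab$ of $P_{u,v}$ (it exists because $u\neq v$); all its edges lie in $E(Q[p])$. As $ab\in E(Q[p])$ we have $p(a)\neq p(b)$, hence $p(a_{p+1})\neq p(b_{p+1})$ for the copies in $Q[p+1]$. Lemma~\ref{hamilpath1} applied to $Q[p+1,q]$ then produces a spanning path $R$ of that block from $a_{p+1}$ to $b_{p+1}$, and replacing $ab$ by the detour $a,a_{p+1},R,b_{p+1},b$ (using the $d$-edges $aa_{p+1}$ and $b_{p+1}b$) absorbs all of $Q[p+1,q]$ into $P_{u,v}$, yielding the desired spanning 2-path of $Q[p,q]$.

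It remains to build a spanning 2-path $P_{x,y}+P_{u,v}$ of a single cube $Q_{n-1}^{k}$ with four prescribed terminals, and this is the crux and the main obstacle. I would argue by induction on the dimension, splitting $Q_{n-1}^{k}$ along a chosen direction $d'$ into layers $Q'[0],\dots,Q'[k-1]$ and analyzing how $x,y,u,v$ distribute: if some direction and threshold $t$ place $\{x,y\}$ in $Q'[0,t]$ and $\{u,v\}$ in $Q'[t+1,k-1]$, the two-block argument above applies verbatim inside $Q_{n-1}^{k}$; otherwise one chooses $d'$ so that the split falls into an admissible instance of the lemma in dimension $n-1$ (an object $Q_{n-2}^{k,d'}[p',q']$), solves it by the inductive hypothesis, and absorbs the remaining full layers by the detour above, with the smallest dimension $Q_3^{k}$ settled directly. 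The delicate point — where I expect the real work to lie — is organizing the terminal distributions into a complete, non-overlapping case list and, for even $k$, threading the balance constraints through every split: the hypotheses $p(x)\neq p(y)$ and $p(u)\neq p(v)$ make $\{x,y,u,v\}$ carry two even and two odd terminals, matching the balanced bipartition of $Q_{n-1}^{k}$, and each recursive call must inherit a correctly balanced terminal set so that Lemmas~\ref{hamilpatheven}, \ref{delete}, \ref{hamilpath3} and \ref{hamilpath1} remain applicable; verifying that some $d'$ always lands an ``interlocked'' configuration in an admissible lower-dimensional instance is the step I anticipate needing a short auxiliary case analysis rather than a single clean split.
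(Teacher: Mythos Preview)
This lemma carries the citation \cite{29} in the paper and is \emph{not} proved here; it is quoted as a known result from Wang and Zhang (Inform.\ Sci.\ 305 (2015) 1--13). So there is no ``paper's own proof'' to compare against.

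As for your outline itself: the two reduction steps are sound. For $x,y\in Q[p]$, $u,v\in Q[q]$ with $q>p$, your split into a spanning path of $Q[p]$ (via Lemma~\ref{hamilpath} or~\ref{hamilpatheven}) and a spanning path of $Q[p+1,q]$ (via Lemma~\ref{hamilpath1}) is correct and uses exactly the parity hypotheses available. Likewise, for the all-in-$Q[p]$ case with $q>p$, your absorption of $Q[p+1,q]$ by rerouting an edge through a spanning path from Lemma~\ref{hamilpath1} is the standard and valid manoeuvre.

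The genuine gap is that you stop short of the core statement: the existence of a spanning $2$-path $P_{x,y}+P_{u,v}$ in a \emph{single} cube $Q_{n-1}^{k}$ with four arbitrary terminals (subject to the parity conditions when $k$ is even). You yourself flag this as ``the crux and the main obstacle'' and then only sketch an inductive scheme, deferring the terminal-distribution case analysis and the base case $Q_3^{k}$ to unspecified ``short auxiliary'' work. That is precisely the content of the lemma as proved in \cite{29}; the case analysis there is not long but is also not automatic, and in particular the interlocked configurations (where no direction separates $\{x,y\}$ from $\{u,v\}$) require care with parity and with how one recurses into a block $Q'[p',q']$ rather than a full subcube. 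Until that argument is written out, the proposal is an outline of the right shape rather than a proof.
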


\section{Spanning $m$-paths in $Q_n^{k}$}
In this section, we testify some Lemmas about spanning $m$-paths which are applied in the constructions of Hamiltonian cycles in our result. In the following lemmas, we always assume $k\geq4$.

\begin{Lemma}\label{hamilpathone} For $n\geq4$, let $x,y$ be distinct vertices in $Q_{n-1}^{k,d}[p,q]$ and whenever $k$ is even, $p(x)\neq p(y)$. Let $M$ be a matching in $Q_{n-1}^{k,d}[p,q]$ such that $|M|\leq1$ and $xy\notin M$. If $x,y\in V(Q[p])$ or $x\in V(Q[p])$ and $y\in V(Q[q])$, then there is a spanning $x,y$-path passing through $M$ in $Q[p,q]$ such that $E(P_{x,y})\cap E(Q[q])$ forms a spanning path or 2-path in $Q[q]$.
\end{Lemma}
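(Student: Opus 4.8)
The plan is to induct on $q-p$, using the existing toolkit for the cases that are already close to the statement. When $q=p$, the graph $Q[p,q]$ is just $Q[p]\cong Q_{n-1}^k$ and we need a spanning $x,y$-path in $Q_{n-1}^k$ passing through a single optional edge; if $M=\emptyset$ this is Lemma~\ref{hamilpath} (odd $k$) or Lemma~\ref{hamilpatheven} (even $k$), and if $M=\{uv\}$ with $\{u,v\}\neq\{x,y\}$ this is exactly Lemma~\ref{hamilpath3}; the remaining subcase $\{u,v\}=\{x,y\}$ cannot occur since $xy\notin M$. (In the $q=p$ case the requirement ``$E(P_{x,y})\cap E(Q[q])$ forms a spanning path or 2-path in $Q[q]$'' is automatic since the whole path lies in $Q[q]=Q[p]$.) So from now on assume $q>p$.

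For $q>p$, I would split off the last subcube $Q[q]$ and distinguish according to where $y$ and the edge of $M$ sit. If $y\in V(Q[q])$, pick a neighbor $y'$ of $y$ inside $Q[q]$ (with $p(y')\neq p(y')$ arranged appropriately, i.e. $y'$ has the parity opposite to $y$ when $k$ is even) such that $y'\neq x$ and $y'$ is not an endpoint of the edge in $M$ if that edge lies in $Q[q]$; since $Q[q]$ is $(n-1)$-regular with $n-1\geq3$ neighbors, such a $y'$ exists. Then $y'y\in E(Q[q])$ and one can first build a spanning $y,y'$-path inside $Q[q]$ passing through $M_q$ (here $|M_q|\leq1$, so Lemma~\ref{hamilpath3} or Lemmas~\ref{hamilpath}/\ref{hamilpatheven} apply, making sure the single edge of $M_q$, if present, is not $\{y,y'\}$ — which we arranged), and then extend it across $E_d(p,q)$ by Lemma~\ref{hamilpath2} applied to $Q[p,q-1]$ with the base edge chosen suitably; more directly, Lemma~\ref{hamilpath2} itself already yields a spanning $x,y$-path passing through $M$ in $Q[p,q]$ with the right intersection pattern, provided $|M_i|\leq 2n-4$ and $|M\cap E_d(i,i+1)|\leq 1$, which hold trivially since $|M|\leq1$. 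The case $x,y\in V(Q[p])$ is handled the same way with the roles reversed: apply Lemma~\ref{hamilpath2} with the base edge $xy$ — but $xy\notin M$ is given, so this is legitimate — to get a spanning $x,y$-path through $M$ in $Q[p,q]$ whose trace in $Q[q]$ is a spanning path. The only place care is needed is when the edge of $M$ lies in some intermediate subcube $Q[i]$ with $p<i<q$; this is absorbed by Lemma~\ref{hamilpath2} since then $M_i=M$ has size $\leq1\leq2n-4$.

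The main obstacle I anticipate is the bookkeeping around the phrase ``spanning path or 2-path in $Q[q]$'': Lemma~\ref{hamilpath2} delivers a spanning \emph{path} in $Q[q]$, which is stronger than what we need, but it forces the base edge to be in $Q[p]$ and not in $M$, which is fine here. The genuinely delicate case is $x\in V(Q[p])$, $y\in V(Q[q])$ \emph{with the edge of $M$ sitting in $E_d(p,p+1)\cup\cdots\cup E_d(q-1,q)$ or in $Q[q]$ itself}; then I would instead argue as follows: route a short path from $x$ through $Q[p],Q[p+1],\ldots$ up to $Q[q]$ using Lemma~\ref{hamilpath1} to reach a vertex $z\in V(Q[q])$ with $E(\cdot)\cap E(Q[q])$ controlled, reserve $z$ and $y$ as the two endpoints of a spanning path (or 2-path) inside $Q[q]$ absorbing $M_q$ via Lemma~\ref{hamilpath3}, and glue. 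Because $|M|\leq1$ the parity constraints and the ``$\neq\{x,y\}$'' side conditions can always be met by the $\geq3$ degrees of freedom in each subcube, so no counting beyond checking $n-1\geq3$ is required; the proof is essentially an orchestration of Lemmas~\ref{hamilpath}, \ref{hamilpatheven}, \ref{hamilpath3}, \ref{hamilpath1}, and \ref{hamilpath2} according to the position of the at most one special edge.
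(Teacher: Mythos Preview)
Your proposal has a genuine gap: you misread the hypothesis of Lemma~\ref{hamilpath2}. That lemma requires $xy\in E(Q[p])\setminus M$, i.e.\ $x$ and $y$ must be \emph{adjacent} in $Q[p]$. In the present statement $x,y$ are arbitrary distinct vertices (subject only to the parity condition), so your sentence ``apply Lemma~\ref{hamilpath2} with the base edge $xy$'' in the case $x,y\in V(Q[p])$ is illegitimate, and the same problem arises in your ``more directly, Lemma~\ref{hamilpath2} itself already yields a spanning $x,y$-path'' for the case $x\in V(Q[p])$, $y\in V(Q[q])$ --- there the endpoints are not even in the same subcube, let alone adjacent. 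Your earlier plan for $y\in V(Q[q])$ (build a spanning $y,y'$-path in $Q[q]$, then extend across by Lemma~\ref{hamilpath2} on $Q[p,q-1]$) has the same issue: Lemma~\ref{hamilpath2} would only give you a spanning path of $Q[p,q-1]$ between two \emph{adjacent} vertices of $Q[p]$, not a spanning path from $x$ to the preimage of $y'$.

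What the paper actually does is closer to your induction sketch, but with real work in the non-adjacent case. For $x,y\in V(Q[p])$ it uses Lemma~\ref{hamilpath3} to get a spanning path in $Q[p]$, selects an edge $s_ps_p'$ on it avoiding $M$ (or hitting the cross-edge of $M$ if $M\subseteq E_d(p,p+1)$), and then invokes Lemma~\ref{hamilpath2} on $Q[p+1,q]$ with the \emph{adjacent} pair $s_{p+1},s_{p+1}'$. For $x\in V(Q[p])$, $y\in V(Q[q])$ it either finds some $j\in\{p,\ldots,q-1\}$ with $M\cap E_d(j,j+1)=\emptyset$ and splits by induction, or is forced into $q=p+1$ with $M=\{s_ps_{p+1}\}$; this last situation needs a genuinely separate argument (two subcases according to whether $y=s_{p+1}$), using Lemma~\ref{pathpartition3} and Lemma~\ref{delete}, which your ``route a short path and glue'' paragraph does not supply.
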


\begin{proof} Note that $0\leq p\leq q\leq k-1$. If $M=\emptyset$, the result holds by Lemma \ref{hamilpath1}. If $|M|=1$, we prove the result by induction on $q-p$. When $q=p$, it holds by Lemma \ref{hamilpath3}. Now consider $q>p$, and suppose that the lemma holds for all $Q[p',q']$ with $q'-p'<q-p$.

\textbf{Case 1.} $x,y\in V(Q[p])$.

By Lemma \ref{hamilpath3} there is a spanning path $P_{x,y}'$ passing through $M_p$ in $Q[p]$. If $M\cap E_d(p,p+1)=\emptyset$, since $|E(P_{x,y}')|-|M|\geq k^{n-1}-1>1$, choose an edge $s_{p}s_{p}'\in E(P_{x,y}')\setminus M$ such that $ s_{p+1}s_{p+1}'\notin M$. If $M\cap E_d(p,p+1)\neq\emptyset$, let $M=\{s_{p}s_{p+1}\}$ and $s_{p}'$ be a neighbor of $s_{p}$ on $P_{x,y}'$. Since $|M[p+1,q]|\leq1$, by Lemma \ref{hamilpath2} there is a spanning path $P_{s_{p+1},s_{p+1}'}$ passing through $M[p+1,q]$ in $Q[p+1,q]$. Hence, $P_{x,y}=P_{x,y}'+P_{s_{p+1},s_{p+1}'}+\{s_{p}s_{p+1},s_{p}'s_{p+1}'\}-\{s_{p}s_{p}'\}$ is the desired spanning path in $Q[p,q]$.

\textbf{Case 2.} $x\in V(Q[p])$ and $y\in V(Q[q])$.

If there exists an integer $j\in\{p,\ldots,q-1\}$ such that $M\cap E_d(j,j+1)=\emptyset$, let $s_j\in V(Q[j])$ such that $s_{j}\neq x$, $s_{j+1}\neq y$ and $p(s_{j+1})=p(x)\neq p(y)=p(s_{j})$ when $k$ is even. By the induction hypothesis there exist spanning paths $P_{x,s_{j}}$ passing through $M[p,j]$ in $Q[p,j]$ and $P_{s_{j+1},y}$ passing through $M[j+1,q]$ in $Q[j+1,q]$, respectively. Hence, $P_{x,y}=P_{x,s_{j}}+P_{s_{j+1},y}+\{s_{j}s_{j+1}\}$ is the desired spanning path in $Q[p,q]$.

Otherwise, $p+1=q$ and $|M\cap E_d(p,p+1)|=1$. Let $M\cap E_d(p,p+1)=\{s_ps_{p+1}\}$. If $y\neq s_{p+1}$, since $s_{p}$ has $2(n-1)\geq6$ neighbors in $Q[p]$, we may choose a neighbor $s_{p}'$ of $s_{p}$ in $Q[p]$ such that $s_{p+1}'\neq y$. Since $|V(Q[p+1])|=k^{n-1}\geq 7$, we may choose a vertex $t_{p+1}\in V(Q[p+1])\setminus\{y,s_{p+1},s_{p+1}'\}$ such that $t_{p}\neq x$, $xt_{p}\neq s_{p}s_{p}'$ and $p(x)\neq p(t_{p})$ when $k$ is even. Then $s_{p+1},s_{p+1}',y,t_{p+1}$ are distinct and $p(s_{p+1})\neq p(s_{p+1}')$, $p(y)\neq p(t_{p+1})$ when $k$ is even. By the induction hypothesis there is a spanning path $P_{x,t_{p}}$ passing through $s_{p}s_{p}'$ in $Q[p]$, and by Lemma \ref{pathpartition3} there is a spanning 2-path $P_{s_{p+1},s_{p+1}'}+P_{y,t_{p+1}}$ in $Q[p+1]$. Hence, $P_{x,y}=P_{x,t_{p}}+P_{s_{p+1},s_{p+1}'}+P_{y,t_{p+1}}+\{s_{p}s_{p+1},s_{p}'s_{p+1}',t_{p}t_{p+1}\}-\{s_{p}s_{p}'\}$ is the desired spanning path in $Q[p,q]$. If $y=s_{p+1}$, now $x\neq s_{p}$. Choose a vertex $t_{p}\in V(Q[p])\setminus \{x,s_{p}\}$ such that $p(t_{p})\neq p(x)$ when $k$ is even. So $t_{p+1}\neq y$. By Lemma \ref{hamilpath1} there is a spanning path $P_{x,t_{p}}$ in $Q[p]$. Denote by $s_{p}'$ the neighbor of $s_{p}$ on $P_{x,t_{p}}$ that is closer to $x$. Then $y,s_{p+1}',t_{p+1}$ are distinct and $p(y)\neq p(s_{p+1}')=p(t_{p+1})$ when $k$ is even. By Lemma \ref{hamilpath} or Lemma \ref{delete} there is a spanning path $P_{s_{p+1}',t_{p+1}}$ in $Q[p+1]-y$. Now $P_{x,t_{p}}+P_{s_{p+1}',t_{p+1}}+\{s_ps_{p+1},s_p's_{p+1}',t_pt_{p+1}\}-\{s_{p}s_{p}'\}$ is the desired spanning path in $Q[p,q]$.
\end{proof}

\begin{Lemma}\label{pathpartition7} For $n\geq4$, let $x,y,u,v$ be distinct vertices in $Q_{n-1}^{k,d}[p,q]$ where $p<q$ and whenever $k$ is even, $p(x)\neq p(y)$ and $p(u)\neq p(v)$. If $x,y,u,v\in V(Q[p])$ and $M$ is a matching in $Q_{n-1}^{k,d}[p,q]$ such that $|M|\leq1$ and $|\{x,y,u,v\}\cap V(M)|\leq1$, then there is a spanning 2-path $P_{x,y}+P_{u,v}$ passing through $M$ in $Q[p,q]$.
\end{Lemma}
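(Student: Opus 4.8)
The plan is to assemble the required spanning $2$-path by processing the layers of the decomposition $Q[p,q]=Q[p]+\cdots+Q[q]$ one at a time, starting from $Q[p]$. If $M=\emptyset$ there is nothing to do beyond Lemma~\ref{pathpartition3}, so assume $M=\{ab\}$. Since $x,y,u,v\in V(Q[p])$, the hypothesis $|\{x,y,u,v\}\cap V(M)|\le 1$ has content only when $a$ or $b$ lies in $V(Q[p])$, that is, only when $ab\in E(Q[p])$ or $ab\in E_d(p,p+1)$; otherwise it is vacuous.

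The basic move is an \emph{absorption} step. Suppose we have a spanning $2$-path $R=P_{x,y}+P_{u,v}$ of $Q[p,j]$ whose intersection with the top layer $Q[j]$ contains plenty of edges. Pick an edge $cd$ of $R$ with $c,d\in V(Q[j])$ --- there are at least $k^{n-1}-2$ of them, so we may dodge any one forbidden edge --- delete $cd$, and splice in the path $c,c_{j+1},\ldots,d_{j+1},d$, where $c_{j+1}\cdots d_{j+1}$ is a spanning path of $Q[j+1]$ joining the layer $(j+1)$ copies of $c$ and $d$; such a path exists by Lemma~\ref{hamilpatheven} for even $k$ and by Lemma~\ref{hamilpath} (with no deleted vertices) for odd $k$, the parity requirement for even $k$ being automatic because $cd\in E(Q[j])$ forces $p(c_{j+1})\ne p(d_{j+1})$. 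The output is a spanning $2$-path of $Q[p,j+1]$ with the same four endpoints, whose intersection with $Q[j+1]$ --- the spliced-in spanning path --- again has plenty of edges, so the move iterates. Starting from the spanning $2$-path of $Q[p]$ produced by Lemma~\ref{pathpartition3} (the case $q=p$) and absorbing $Q[p+1],\ldots,Q[q]$ in turn already disposes of $M=\emptyset$.

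Now we force the edge $ab$ into the path. If $ab\in E(Q[i])$ with $i>p$: run the absorption up to $Q[i-1]$, then when absorbing $Q[i]$ choose the detour edge $cd\subseteq V(Q[i-1])$ so that $\{c_i,d_i\}\ne\{a,b\}$ and take for the spliced-in piece a spanning $c_i,d_i$-path of $Q[i]$ through $ab$, which exists by Lemma~\ref{hamilpath3}; then continue absorbing $Q[i+1],\ldots,Q[q]$. If $ab\in E_d(j,j+1)$, say $a=a_j$ and $b=a_{j+1}$: absorb up to $Q[j]$, arranging --- by choosing, at the step absorbing $Q[j]$, a detour edge of $Q[j-1]$ not incident to $a_{j-1}$, so that $a_j$ becomes interior to the spliced-in spanning path of $Q[j]$; when $j=p$ this is automatic since $R\subseteq Q[p]$ --- that $a_j$ is interior to $R$ with a neighbour $c_j\in V(Q[j])$ on $R$; then delete the edge $a_jc_j$ and reconnect through a spanning path of $Q[j+1]$ from $a_{j+1}$ to $c_{j+1}$, which inserts the edge $a_ja_{j+1}=ab$ (and if $j=p$ with $a_p\in\{x,y,u,v\}$, do the analogous reconnection at that endpoint, which is legitimate because then $a_p\in V(M)$ and hence no other endpoint lies in $V(M)$), and absorb the layers above as before.

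The remaining case $ab\in E(Q[p])$ is the one I expect to be the main obstacle. Since $M\subseteq E(Q[p])$, absorbing $Q[p+1],\ldots,Q[q]$ (each step dodging the one edge $ab$) reduces the lemma to a statement about a single $(n-1)$-cube: \emph{$Q_{n-1}^k$ contains a spanning $2$-path $P_{x,y}+P_{u,v}$ through a prescribed edge $ab$, where $x,y,u,v$ are the four prescribed endpoints (parity-valid when $k$ is even) and at most one of them is incident to $ab$}. None of Lemmas~\ref{hamilpath}--\ref{pathpartition3} gives this outright --- Lemma~\ref{pathpartition2} requires the two endpoint pairs to be edges, and Lemma~\ref{hamilpath3} is about $1$-paths --- so it needs its own argument. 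The natural route is a secondary induction on the dimension: split $Q_{n-1}^k$ along the coordinate in which $a,b$ differ, turning $ab$ into a crossing edge between two consecutive $(n-2)$-subcubes; then, according to how $x,y,u,v$ fall among the $(n-2)$-subcubes, glue a spanning $2$-path of the subcube(s) containing the endpoint pairs (from the lower-dimensional instance, or from Lemma~\ref{pathpartition3}) to spanning paths of the remaining subcubes (from Lemmas~\ref{hamilpath1} and \ref{hamilpath3}), treating the small cubes $Q_2^k$ and $Q_3^k$ directly. Throughout the whole argument the even-$k$ parity conditions are self-correcting, since every splicing edge and every shift to an adjacent layer joins vertices of opposite parity.
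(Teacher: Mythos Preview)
Your absorption machinery is fine and does essentially the same work as the paper's appeal to Lemma~\ref{hamilpath2} in the case $|M_p|=0$: once you have a spanning $2$-path in $Q[p]$, one splice per layer (with the obvious choice of detour edge when $ab\in E_d(j,j+1)$ or $ab\in E(Q[i])$ for $i>p$) extends it through $M$. So for every case except $ab\in E(Q[p])$ your proposal and the paper's proof agree up to packaging.

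The case $ab\in E(Q[p])$ is where you take an unnecessary detour. You correctly observe that it would suffice to produce a spanning $2$-path of the single layer $Q[p]\cong Q_{n-1}^{k}$ with endpoints $x,y,u,v$ passing through $ab$, and you then propose to prove this by a fresh induction on dimension, with a case split on how $x,y,u,v$ distribute over the $(n-2)$-subcubes and a direct treatment of the base $Q_3^k$. That programme is plausible but is substantially more work than the lemma deserves, and your sketch does not actually carry it out: the distribution of four arbitrary endpoints over $k$ subcubes generates many subcases, and none of Lemmas~\ref{pathpartition2} or~\ref{pathpartition3} supplies the base case (four arbitrary endpoints plus a prescribed edge) directly.

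The paper sidesteps all of this with a one-line trick that exploits the hypothesis $p<q$. Since $|\{x,y,u,v\}\cap V(M)|\le 1$, we may assume $\{u,v\}\cap\{a,b\}=\emptyset$. Now build a spanning \emph{$1$-path} $P_{x,y}'$ of $Q[p]$ through $ab$ via Lemma~\ref{hamilpath3} (applicable because $\{x,y\}\ne\{a,b\}$). Set $P_{u,v}:=P_{x,y}'[u,v]$. Let $u_p',v_p'$ be the neighbours of $u,v$ on $P_{x,y}'$ lying outside that segment; since $u,v\notin\{a,b\}$ the deleted edges $uu_p',vv_p'$ are not $ab$, and $p(u_{p+1}')\ne p(v_{p+1}')$ when $k$ is even. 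A single spanning path $P_{u_{p+1}',v_{p+1}'}$ of $Q[p+1,q]$ (Lemma~\ref{hamilpath1}) then reconnects the two loose arcs into $P_{x,y}$. No secondary induction, no base-case analysis: the extra layers are used not just to absorb, but to heal the cut that carves $P_{u,v}$ out of the $1$-path.
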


\begin{proof} If $|M_p|=1$, since $|\{x,y,u,v\}\cap V(M)|\leq1$, we may assume $\{u,v\}\cap V(M)=\emptyset$. By Lemma \ref{hamilpath3} there exists a spanning path $P_{x,y}'$ passing through $M_p$ in $Q[p]$. Let $u_{p}',v_{p}'$ be neighbors of $u,v$ on $P_{x,y}'$ such that $u_{p}'\notin V(P_{x,y}[u,v])$ and $v_{p}'\notin V(P_{x,y}[u,v])$. Note that $\{uu_{p}',vv_{p}'\}\cap M=\emptyset$, and $u_{p}'\neq v_{p}'$, and $p(u_{p}')\neq p(v_{p}')$ when $k$ is even. By Lemma \ref{hamilpath1} there is a spanning path $P_{u_{p+1}',v_{p+1}'}$ in $Q[p+1,q]$. Let $P_{x,y}=P_{x,y}'-V(P_{x,y}'[u,v])+P_{u_{p+1}',v_{p+1}'}+\{u_{p}'u_{p+1}',v_{p}'v_{p+1}'\}$ and $P_{u,v}=P_{x,y}'[u,v]$. Now $P_{x,y}+P_{u,v}$ is the desired spanning 2-path in $Q[p,q]$.

Otherwise, $|M_p|=0$. Then by Lemma \ref{pathpartition3} there is a spanning 2-path $P_{x,y}'+P_{u,v}'$ in $Q[p]$. If $M\cap E_d(p,p+1)= \emptyset$, let $s_{p}s_{p}'\in E(P_{x,y}'+P_{u,v}')$ such that $s_{p+1}s_{p+1}'\notin M_{p+1}$. If $M\cap E_d(p,p+1)\neq \emptyset$, let $M\cap E_d(p,p+1)=\{s_{p}s_{p+1}\}$ and $s_{p}s_{p}'$ be an edge on $P_{x,y}'+P_{u,v}'$. Note that $|M[p+1,q]|\leq1$, by Lemma \ref{hamilpath2} there exists a spanning path $P_{s_{p+1},s_{p+1}'}$ passing through $M[p+1,q]$ in $Q[p+1,q]$. Without loss of generality, we may assume $s_{p}s_{p}'\in E(P_{x,y}')$. Let $P_{x,y}=P_{x,y}'+P_{s_{p+1},s_{p+1}'}+\{s_{p}s_{p+1},s_{p}'s_{p+1}'\}-\{s_{p}s_{p}'\}$ and $P_{u,v}=P_{u,v}'$. Now $P_{x,y}+P_{u,v}$ is the desired spanning 2-path in $Q[p,q]$.
\end{proof}

\begin{Lemma}\label{hamilpathtwo} For $n\geq4$, let $x,y$ be distinct vertices in $Q_n^{k}$ and whenever $k$ is even, $p(x)\neq p(y)$. If $M$ is a matching in $Q_n^{k}$ with $|M|\leq2$ and $xy\notin M$, then there is a spanning $x,y$-path passing through $M$ in $Q_n^{k}$.
\end{Lemma}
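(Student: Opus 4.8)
The plan is to split $Q_n^k$ into $(n-1)$-dimensional subcubes $Q[0],\ldots,Q[k-1]$ by a well-chosen dimension $E_d$, so that the matching $M$ of size at most $2$ is distributed favorably among the layers, and then build the spanning $x,y$-path layer by layer using the already-established Lemmas. The key preliminary step is a counting argument: since $|M|\le 2$ and an edge of $M$ lies in at most one $E_d$ (as a $d$-edge) and otherwise in some $E(Q[i])$, there are only finitely many dimensions $d$ that are ``bad'' for $M$; choosing $d$ so that $E_d\cap M=\emptyset$ and no single layer $Q[i]$ contains all of $M$ reduces us to the situation where each $M_i$ has size at most $1$. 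A second, independent use of the counting argument is needed to arrange that $x$ and $y$ fall into convenient layers — ideally the same layer $Q[p]$ or two layers $Q[p],Q[q]$ with the edges of $M$ confined to $Q[p,q]$ — which is exactly the hypothesis format of Lemma~\ref{hamilpathone} and Lemma~\ref{pathpartition7}.

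First I would dispose of the case $M=\emptyset$ (immediate from Lemma~\ref{hamilpatheven}, or from Lemma~\ref{hamilpath} when $k$ is odd) and the case $|M|=1$ (immediate from Lemma~\ref{hamilpath3} with any auxiliary edge, or directly if one prefers). For $|M|=2$, write $M=\{e_1,e_2\}$. Pick a dimension $d$ in which neither $e_1$ nor $e_2$ is a $d$-edge — possible since together they forbid at most $2$ of the $n\ge 4$ dimensions — so that after splitting by $E_d$ every edge of $M$ lies inside some layer. Relabel the layers cyclically so that the vertices $x,y$ and the (at most two) layers meeting $M$ all sit inside a consecutive block $Q[p,q]$ with $q<k-1$, i.e.\ the ``cut'' $E_d(k-1,0)$ is clean; this is where one exploits that $x,y,V(e_1),V(e_2)$ involve only a bounded number of layers while $k\ge 4$ gives enough room to rotate. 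Now inside $Q[p,q]$ the data are: $x,y$ lying in one or two layers, a matching $M$ with $|M|\le 2$ but $|M_i|\le 1$ in each layer.

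From here the argument branches on how $M$'s two edges are placed relative to $x,y$'s layers, and in each branch one peels off one layer at a time. The workhorses are Lemma~\ref{hamilpathone} (a spanning path through a matching of size $\le 1$ in a block $Q[p',q']$, with controlled restriction to the top layer) and Lemma~\ref{pathpartition7} (a spanning 2-path through a matching of size $\le 1$ in a block, used when I need to ``route around'' a vertex or an extra edge). A representative move: if $x,y\in V(Q[p])$ and the two edges of $M$ sit in layers $Q[a]$ and $Q[b]$ with $p\le a\le b\le q$, choose an edge $s_p s_p'$ on a spanning path of $Q[p]$ (through $M_p$ if $a=p$) whose lifted endpoints $s_{p+1},s_{p+1}'$ avoid $M$, then apply Lemma~\ref{hamilpathone} to get a spanning path $P_{s_{p+1},s_{p+1}'}$ through $M[p+1,q]$ (which has size $\le 2$ — so actually here I need to further subdivide $[p+1,q]$ at a layer between $e_1$'s and $e_2$'s layers to keep each sub-block's matching at size $\le 1$, gluing two invocations of Lemma~\ref{hamilpathone} via a free $d$-edge), and splice: $P_{x,y}=P_{x,y}'+P_{s_{p+1},s_{p+1}'}+\{s_ps_{p+1},s_p's_{p+1}'\}-\{s_ps_p'\}$. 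When an edge of $M$ is a layer-edge incident with $x$ or $y$, Lemma~\ref{pathpartition7} supplies the needed detour in the relevant layer. The main obstacle is bookkeeping, not ideas: one must verify in every placement of the two matching edges — both in $x$'s layer, one in $x$'s layer and one in an intermediate layer, both in intermediate layers, one incident with an endpoint, the degenerate sub-cases where the splicing edge is forced — that the required parity conditions ($p(\cdot)\ne p(\cdot)$ for even $k$), distinctness of chosen vertices, and the $\{u,v\}\cap V(M)=\emptyset$–type side conditions of the applied Lemmas can all be met simultaneously; the room for this comes from $n\ge 4$ (so each layer has $2(n-1)\ge 6$ local neighbors and $k^{n-1}\ge 7$ vertices) and $k\ge 4$ (so there is always a spare layer to cut at). I expect the write-up to be a somewhat lengthy case analysis that nonetheless uses no tool beyond the Lemmas already in the excerpt.
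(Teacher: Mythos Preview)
Your plan has a genuine gap: you assert that after choosing $d$ with $M\cap E_d=\emptyset$ one obtains $|M_i|\le 1$ for every layer $i$, but this is false. Take $n=4$, $k=4$, $e_1=\{0000,1000\}$ (a $1$-edge) and $e_2=\{0100,0200\}$ (a $2$-edge); for $d=3,4$ both edges land in the single layer $Q[0]$, and $d=1,2$ are forbidden. So the case $|M_0|=2$ is unavoidable, and precisely this case is not covered by your tools: Lemma~\ref{hamilpathone} and Lemma~\ref{pathpartition7} only pass through a matching of size at most $1$, and your ``subdivide the block between $e_1$'s and $e_2$'s layers'' workaround collapses when the two layers coincide. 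If in addition $x,y\in V(Q[0])$, you need a spanning $x,y$-path in $Q[0]$ through both edges of $M_0$ before you can splice in the remaining layers, and nothing in your outline produces such a path.

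The paper's proof confronts exactly this. In Case~1 with $|M_0|=2$ it first builds, via Lemma~\ref{hamilpathone}, a spanning $x,y$-path $P'_{x,y}$ in $Q[0]$ through one matching edge; if the second edge $uv$ already lies on $P'_{x,y}$ one proceeds directly, and if not one performs a rerouting: pick neighbours $u'_0,v'_0$ of $u,v$ on $P'_{x,y}$ with exactly one of them on $P'_{x,y}[u,v]$, delete $uu'_0,vv'_0$ and add $uv$, then close up through $Q[1,k-1]$ via a spanning $u'_1,v'_1$-path. In Case~2 (endpoints in different layers) the paper relies on Lemma~\ref{hamilpath2}, which accepts $|M_i|\le 2n-4$ per layer and so swallows $|M_0|=2$ without difficulty; you never invoke this lemma, which is the real workhorse here. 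Your sketch would be repairable by inserting both of these mechanisms, but as written it does not handle the concentrated-matching case.
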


\begin{proof} When $|M|\leq1$, the result holds by Lemma \ref{hamilpath3}. Next consider $|M|=2$. Since $n\geq4$, we may split $Q_n^{k}$ into subcubes $Q[0],\ldots,Q[k-1]$ such that $M\cap E_d=\emptyset$. By symmetry we may assume $x\in V(Q[0])$ and $y\in V(Q[i])$, where $0\leq i\leq k-1$.

\textbf{Case 1.} $i=0$.

If $|M_0|\leq1$, by Lemma \ref{hamilpath3} there exists a spanning path $P_{x,y}'$ passing through $M_0$ in $Q[0]$. Let $s_{0}s_{0}'\in E(P_{x,y}')\setminus M_0$ such that $s_{1}s_{1}'\notin M$. Note that $|M_i|\leq2$ for every $i\in\{1,\ldots,k-1\}$ and $M\cap E_d=\emptyset$. By Lemma \ref{hamilpath2} there exists a spanning path $P_{s_{1},s_{1}'}$ passing through $M[1,k-1]$ in $Q[1,k-1]$. Now $P_{x,y}=P_{x,y}'+P_{s_{1},s_{1}'}+\{s_{0}s_{1},s_{0}'s_{1}'\}-\{s_{0}s_{0}'\}$ is the desired spanning path in $Q_n^{k}$.

Otherwise, $|M_0|=|M|=2$. Let $uv\in M_0$. By Lemma \ref{hamilpathone} there exists a spanning path $P_{x,y}'$ passing through $M_0\setminus\{uv\}$ in $Q[0]$. If $uv\in E(P_{x,y}')$, choose an edge $s_{0}s_{0}'\in E(P_{x,y}')\setminus M_0$. By Lemma \ref{hamilpath2} there exists a spanning path $P_{s_{1},s_{1}'}$ in $Q[1,k-1]$. Let $P_{x,y}=P_{x,y}'+P_{s_{1},s_{1}'}+\{s_{0}s_{1},s_{0}'s_{1}'\}-\{s_{0}s_{0}'\}$. If $uv\notin E(P_{x,y}')$, since $uv\neq xy$, we may choose neighbors $u_{0}',v_{0}'$ of $u,v$ on $P_{x,y}'$ such that exactly one of $\{u_{0}',v_{0}'\}$ lies on $P_{x,y}'[u,v]$. Since $uv\in M_0$, we have $\{uu_{0}',vv_{0}'\}\cap M=\emptyset$. Now $u_{1}',v_{1}'$ are distinct and $p(u_{1}')\neq p(v_{1}')$ when $k$ is even. By Lemma \ref{hamilpath1} there exists a spanning path $P_{u_{1}',v_{1}'}$ in $Q[1,k-1]$. Let $P_{x,y}=P_{x,y}'+P_{u_{1}',v_{1}'}+\{uv,u_{0}'u_{1}',v_{0}'v_{1}'\}-\{uu_{0}',vv_{0}'\}$. Hence, $P_{x,y}$ is the desired spanning path in $Q_n^{k}$.

\textbf{Case 2.} $1\leq i\leq k-1$.

Since $|M[0,i-1]|+|M[i,k-1]|=|M|=2$, without loss of generality, assume $|M[i,k-1]|\leq1$. Choose a neighbor $x_{0}'$  of $x$ in $Q[0]$ such that $x_{k-1}'\neq y$ and $\{xx_{0}',yx_{k-1}'\}\cap M=\emptyset$. Note that $p(x_{k-1}')=p(x)\neq p(y)$ when $k$ is even. Since $M\cap E_d=\emptyset$, by Lemma \ref{hamilpath2} and Lemma \ref{hamilpathone} there exist spanning paths $P_{x,x_{0}'}$ passing through $M[0,i-1]$ in $Q[0,i-1]$ and $P_{y,x_{k-1}'}$ passing through $M[i,k-1]$ in $Q[i,k-1]$, respectively. Now $P_{x,y}=P_{x,x_{0}'}+P_{y,x_{k-1}'}+\{x_{0}'x_{k-1}'\}$ is the desired spanning path in $Q_n^{k}$.
\end{proof}

\begin{Lemma}\label{pathpartition8} For $n\geq5$, let $x,y$ be distinct vertices in $Q_{n-1}^{k,d}[p,q]$ such that $x,y\in V(Q[p])$ and whenever $k$ is even, $p(x)\neq p(y)$. If $M$ is a matching in $Q_{n-1}^{k,d}[p,q]$ such that $|M|\leq2$ and $xy\notin M$, then there is a spanning $x,y$-path passing through $M$ in $Q_{n-1}^{k,d}[p,q]$.
\end{Lemma}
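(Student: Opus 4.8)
The plan is to induct on $q-p$. The base case $q=p$ is immediate: $Q[p]$ is a $k$-ary $(n-1)$-cube with $n-1\geq 4$, so Lemma~\ref{hamilpathtwo} already gives a spanning $x,y$-path of $Q[p]$ through $M$. For $q>p$ the case $|M|\leq 1$ is exactly Lemma~\ref{hamilpathone} (both $x,y$ lie in the first slice), so I may assume $|M|=2$. The recurring device is a ``splice'': build an $x,y$-path $P'$ in $Q[p]$ using $M_p$, choose an edge $s_ps_p'$ of $P'$ lying off $M$ whose lift $s_{p+1}s_{p+1}'$ also avoids $M$ (possible because $P'$ has $k^{n-1}-1\geq 255$ edges while $|M|\leq 2$), build a spanning $s_{p+1},s_{p+1}'$-path $P''$ of $Q[p+1,q]$ through the part of $M$ outside $Q[p]$, and set $P_{x,y}=P'+P''+\{s_ps_{p+1},s_p's_{p+1}'\}-\{s_ps_p'\}$.

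I organize the argument by how $M$ meets the first slice $Q[p]$ and the first ``gap'' $E_d(p,p+1)$. If $M\cap E_d(p,p+1)=\emptyset$, then $M=M_p\cup M[p+1,q]$ with both parts internal to their regions, and the splice goes through: take $P'$ from Lemma~\ref{hamilpath3} when $|M_p|\leq 1$ (or from Lemma~\ref{hamilpathtwo} when $|M_p|=2$, legitimate since $\dim Q[p]=n-1\geq 4$, or a plain spanning path from Lemma~\ref{hamilpatheven}/Lemma~\ref{hamilpath} when $M_p=\emptyset$), and take $P''$ from Lemma~\ref{hamilpathone} when $|M[p+1,q]|\leq 1$, while if $M_p=\emptyset$ and $|M[p+1,q]|=2$ take $P''$ from the induction hypothesis applied to $Q[p+1,q]$ (the endpoints $s_{p+1},s_{p+1}'$ lie in its first slice), the sub-case $q=p+1$ being again Lemma~\ref{hamilpathtwo} in the single slice $Q[p+1]$.

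The real work is when $M$ contains a $d$-edge $t_pt_{p+1}$ of the first gap, which is then forced to be a bridge of the desired cycle, so the path must cross $E_d(p,p+1)$ an even number of times with $t_pt_{p+1}$ among the crossings. If $M$ has exactly one first-gap edge, I take an $x,y$-path $P'$ of $Q[p]$ through $M_p$ (Lemma~\ref{hamilpath3}, or a plain spanning path if $M_p=\emptyset$) and carve it at $t_p$ by deleting one of the two $P'$-edges incident to $t_p$ — neither of these lies in $M_p$, since $M$ is a matching, so the carved spanning $2$-path $P_{x,t_p}+P_{s_p,y}$ of $Q[p]$ still uses $M_p$ — then attach a spanning $t_{p+1},s_{p+1}$-path of $Q[p+1,q]$ through the one remaining edge of $M$ via Lemma~\ref{hamilpathone} (or Lemma~\ref{hamilpath1} when no edge remains); the carved-off neighbour $s_p$ of $t_p$ gives $t_ps_p\in E(Q[p])$, hence the required parity relation $p(t_{p+1})\neq p(s_{p+1})$. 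The degenerate possibilities $t_p\in\{x,y\}$ force the corresponding endpoint to use $t_pt_{p+1}$ as its only path-edge and are handled by isolating that vertex and invoking Lemma~\ref{delete} (even $k$) or Lemma~\ref{hamilpath} (odd $k$) for the punctured slice $Q[p]-t_p$.

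I expect the main obstacle to be $M=\{t_pt_{p+1},s_ps_{p+1}\}$, two parallel first-gap edges. When $k$ is odd, or $k$ is even with $p(t_p)\neq p(s_p)$, a two-crossing layout works: glue a spanning $2$-path $P_{x,a}+P_{b,y}$ of $Q[p]$, with $\{a,b\}=\{t_p,s_p\}$ and labels chosen to meet the parity hypotheses of Lemma~\ref{pathpartition3}, to a spanning $t_{p+1},s_{p+1}$-path of $Q[p+1,q]$ from Lemma~\ref{hamilpath1}. When $k$ is even and $p(t_p)=p(s_p)$, however, two crossings cannot work — the $Q[p+1,q]$-portion would then be a spanning path between the equal-parity vertices $t_{p+1},s_{p+1}$, impossible in the balanced bipartite graph $Q[p+1,q]$ (balanced because $n\geq 5$) — so a four-crossing construction is required. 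Here I would carve a spanning $x,y$-path of $Q[p]$ twice into a spanning $3$-path $P_{x,A}+P_{B,C}+P_{D,y}$ with $\{t_p,s_p\}\subseteq\{A,B,C,D\}$, attach into $Q[p+1,q]$ two ``excursions'' forming a spanning $2$-path of $Q[p+1,q]$ whose four endpoints are the lifts of $A,B,C,D$ and lie in its first slice, and choose the two carvings so that both excursion-pieces have opposite-parity endpoints, whence Lemma~\ref{pathpartition3} supplies the $2$-path; this needs $t_p$ and $s_p$ to be non-consecutive on the carved path, which can be arranged — when $t_ps_p$ is an edge actually used — by a local reroute across a $4$-cycle through $t_ps_p$. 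The coincidences $t_p\in\{x,y\}$ are then the fiddliest point: one endpoint is forced to use $t_pt_{p+1}$ as its only path-edge, the $Q[p]$-part degenerates to a spanning structure of the punctured slice $Q[p]-t_p$ whose forced endpoint $s_p$ lies in the minority colour class, and the colour-class sizes of $Q[p]-t_p$ and of $Q[p+1,q]$ must be tracked carefully, combining Lemma~\ref{delete} with Lemma~\ref{pathpartition3}.
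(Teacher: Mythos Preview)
Your inductive skeleton matches the paper's, and the splice idea is right, but you are making the gap-edge cases much harder than they need to be, and the ``fiddly'' sub-cases you flag (coincidences $t_p\in\{x,y\}$, the parity split $p(t_p)=p(s_p)$ versus $p(t_p)\neq p(s_p)$) are artefacts of a suboptimal splice, not genuine obstacles.

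The paper's device is this: having built the spanning path $P'_{x,y}$ of $Q[p]$ through $M_p$ (base case of the induction, i.e.\ Lemma~\ref{hamilpathtwo}), it never tries to make the $M$-vertices into \emph{endpoints} of sub-paths. Instead it always deletes a $P'$-edge \emph{incident} to the $M$-vertex. Concretely, if $M\cap E_d(p,p+1)=\{s_ps_{p+1}\}$, take $s_p'$ to be any $P'$-neighbour of $s_p$ and splice at the edge $s_ps_p'$: since $M$ is a matching, $s_ps_p'\notin M$ and $s_{p+1}s'_{p+1}\notin M$, and the induction hypothesis on $Q[p+1,q]$ furnishes $P_{s_{p+1},s'_{p+1}}$ through $M[p+1,q]$. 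This works verbatim when $s_p\in\{x,y\}$ (an endpoint still has one $P'$-neighbour), so your separate treatment via Lemma~\ref{delete}/\ref{hamilpath} --- which, as written, loses the ability to pass through a possible edge of $M_p$ --- is unnecessary.

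For two gap-edges $M=\{s_ps_{p+1},t_pt_{p+1}\}$ the paper again splices at $P'$-edges incident to $s_p$ and $t_p$. If $s_p,t_p$ are $P'$-adjacent, a single splice at $s_pt_p$ already carries both required $d$-edges. Otherwise choose $P'$-neighbours $s_p',t_p'$ with $s_p'\neq t_p'$ and apply Lemma~\ref{pathpartition3} in $Q[p+1,q]$ to the four vertices $s_{p+1},s'_{p+1},t_{p+1},t'_{p+1}$. The point you missed is that the required parity hypotheses $p(s_{p+1})\neq p(s'_{p+1})$ and $p(t_{p+1})\neq p(t'_{p+1})$ are automatic, because $s_ps_p'$ and $t_pt_p'$ are edges. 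So there is no case split on $p(s_p)$ versus $p(t_p)$, and no two-crossing versus four-crossing dichotomy: the double splice handles everything. The coincidence $s_p\in\{x,y\}$ is again harmless, since endpoints have $P'$-neighbours; the only thing to check is $s_p'\neq t_p'$, which is easy because $|V(Q[p])|\geq k^{n-1}\geq 256$.

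In short: your plan is salvageable but you have not closed the $t_p\in\{x,y\}$ sub-cases, and the cleaner fix is to adopt the paper's ``delete an incident path-edge'' splice throughout, which collapses your case analysis to three lines.
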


\begin{proof} Note that $0\leq p\leq q\leq k-1$. We prove the lemma by induction on $q-p$. When $p=q$, the result holds by Lemma \ref{hamilpathtwo}. Next consider $p<q$, and suppose that the lemma holds for all $Q[p',q']$ with $q'-p'<q-p$.

By the induction hypothesis there exists a spanning path $P_{x,y}'$ passing through $M_p$ in $Q[p]$. If $M\cap E_d(p,p+1)=\emptyset$, since $|E(P_{x,y}')|-|M_{p}|-|M_{p+1}|\geq k^{n-1}-1-2>1$, we may choose an edge $s_{p}s_{p}'\in E(P_{x,y}')\setminus M_p$ such that $s_{p+1}s_{p+1}'\notin M_{p+1}$. If $|M\cap E_d(p,p+1)|=1$, let $M\cap E_d(p,p+1)=\{s_{p}s_{p+1}\}$ and $s_{p}'$ be a neighbor of $s_{p}$ on $P_{x,y}'$, then $\{s_{p}s_{p}',s_{p+1}s_{p+1}'\}\cap M=\emptyset$. If $|M\cap E_d(p,p+1)|=2$, let $M=\{s_{p}s_{p+1},t_{p}t_{p+1}\}$ and furthermore, if $d_{P_{x,y}'}(s_{p},t_{p})=1$, then let $t_{p}=s_{p}'$. In the above three cases, $s_{p+1},s_{p+1}'$ are distinct and whenever $k$ is even, $p(s_{p+1})\neq p(s_{p+1}')$. By the induction hypothesis there is a spanning path passing through $M[p+1,q]$ in $Q[p+1,q]$. Hence, $P_{x,y}=P_{x,y}'+P_{s_{p+1},s_{p+1}'}+\{s_{p}s_{p+1},s_{p}'s_{p+1}'\}-\{s_{p}s_{p}'\}$ is the desired spanning path in $Q_{n-1}^{k,d}[p,q]$. It remains to consider the case $M=\{s_{p}s_{p+1},t_{p}t_{p+1}\}$ and $d_{P_{x,y}'}(s_{p},t_{p})>1$. Choose neighbors $s_p',t_p'$ of $s_{p},t_{p}$ on $P_{x,y}'$, respectively, such that
$s_p'\neq t_p'$. By Lemma \ref{pathpartition3} there is a spanning 2-path $P_{s_{p+1},s_{p+1}'}+P_{t_{p+1},t_{p+1}'}$ in $Q[p+1,q]$. Now $P_{x,y}=P_{x,y}'+P_{s_{p+1},s_{p+1}'}+P_{t_{p+1},t_{p+1}'}+\{s_{p}s_{p+1},$
$s_{p}'s_{p+1}',t_{p}t_{p+1},t_{p}'t_{p+1}'\}-\{s_{p}s_{p}',t_{p}t_{p}'\}$ is the desired spanning path in $Q_{n-1}^{k,d}[p,q]$.
\end{proof}

\begin{Lemma}\label{pathpartition9} For $n\geq5$, let $x,y,u,v$ be distinct vertices in $Q_n^{k}$, and $p(x)\neq p(y)$ whenever $k$ is even. If $M$ is a matching in $Q_n^{k}$ and $uv\in E(Q_n^{k})$ such that $|M|\leq1$, and $V(M)\cap\{u,v\}=\emptyset$, and $xy\notin M$, then there is a spanning $x,y$-path passing through $M$ in $Q_n^{k}-\{u,v\}$.
\end{Lemma}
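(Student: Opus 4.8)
The plan is to split $Q_n^k$ into $(n-1)$-dimensional subcubes and build the spanning path block by block, but first to reduce the role of $M$ to a single edge and to dispose of the case where $x$ and $y$ are adjacent. If $M\neq\emptyset$ write $M=\{e\}$; since $e$ is one edge with $V(e)\cap\{u,v\}=\emptyset$ and $e\neq xy$, it will be cheap to keep $e$ on whatever path we build (and if $M=\emptyset$ there is nothing to keep). Suppose first that $xy\in E(Q_n^k)$; this case admits a direct construction. Let $d_0$ be the direction in which $u$ and $v$ differ. I would pick a direction $d_1\neq d_0$ and a sign so that, writing $a$ for the neighbour of $u$ obtained by changing its $d_1$-th coordinate by that sign and $b$ for the corresponding neighbour of $v$, the vertices $a,u,v,b$ are distinct, $au,uv,vb,ab\in E(Q_n^k)$, and $\{a,b\}\cap(\{x,y\}\cup V(M))=\emptyset$. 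Such a choice exists: there are $2(n-1)\ge 8$ candidate pairs $(a,b)$, and each of the at most four vertices in $\{x,y\}\cup V(M)$ spoils at most one of them, because such an $a$ must be a neighbour of $u$ in a direction $\neq d_0$, such a $b$ a neighbour of $v$ in a direction $\neq d_0$, and $u,v$ have no common neighbour since $Q_n^k$ is triangle-free for $k\ge 4$. Then $F=\{au,uv,vb\}\cup M\cup\{xy\}$ is a linear forest with $|F|\le 5\le 2n-1$, so Theorem~\ref{forest} gives a Hamiltonian cycle $C$ of $Q_n^k$ containing $F$; as $C$ contains the path $a,u,v,b$, the graph $C'=(C-\{u,v\})+ab$ is a Hamiltonian cycle of $Q_n^k-\{u,v\}$, and since $x$, $y$ and the endpoints of $e$ all lie outside $\{a,u,v,b\}$, both $xy$ and $e$ survive on $C'$; removing the edge $xy$ from $C'$ yields the desired spanning $x,y$-path through $M$.

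For the remaining case $xy\notin E(Q_n^k)$ I would split $Q_n^k$ by $E_{d_0}$ and relabel the $d_0$-th coordinate cyclically so that $u\in V(Q[0])$ and $v\in V(Q[1])$; then $Q[0]$ and $Q[1]$ each lose exactly one vertex while $Q[2],\dots,Q[k-1]$ are intact. Next I would run a case analysis on the positions of $x$, $y$ and (when $M\neq\emptyset$) $e$ among the subcubes, and in each case assemble the spanning $x,y$-path as a concatenation of spanning paths of the pieces, glued along $d_0$-edges: a spanning path of $Q[0]-u$ and one of $Q[1]-v$ from Lemma~\ref{delete} when $k$ is even and from Lemma~\ref{hamilpath} when $k$ is odd; spanning paths of the intact blocks $Q[p,q]$ from Lemmas~\ref{hamilpath1}, \ref{hamilpath2} and \ref{hamilpathone}; and, when the intact block that must carry $e$ is a single subcube, a spanning path of it through $e$ from Lemma~\ref{hamilpathtwo} (or from Lemma~\ref{hamilpath2} when the block is longer). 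The $d_0$-edges at $u$ and $v$ that are missing are never needed, since $Q[0]$ and $Q[1]$ are always entered and left through port vertices chosen distinct from $u$ and $v$, and the two endpoints of the resulting path are made to be $x$ and $y$ by entering the blocks containing them accordingly.

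I expect this case analysis to be the bulk of the proof and two features of it to be the real obstacle. First, the parity bookkeeping for even $k$: the two ends of every path supplied by Lemma~\ref{delete} must have equal parity, opposite to that of the deleted vertex, while the two ends of each $Q[p,q]$-path must have opposite parities, so making these constraints compatible at every junction forces the port vertices to be chosen with prescribed parities — possible because each vertex of an $(n-1)$-cube has $2(n-1)\ge 8$ neighbours, but delicate, and tightest when $x$ and $y$ both lie in $Q[0]\cup Q[1]$. Second, the sub-cases where $e$ lies in $Q[0]$ or $Q[1]$, i.e.\ in the same subcube as a deleted vertex: there one needs a spanning path of $Q[0]-u$ (resp.\ $Q[1]-v$) through $e$, which is not among the quoted lemmas, and I would obtain it by splitting that subcube once more along a direction avoiding the direction of $e$ and reducing again to Lemmas~\ref{delete} and \ref{hamilpath} together with Lemma~\ref{hamilpathone}.
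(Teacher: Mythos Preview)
Your split is the opposite of the paper's, and that choice creates a parity obstruction you have not resolved. The paper picks $d$ so that $M\cap E_d=\emptyset$ and $u,v$ lie in the \emph{same} subcube $Q[0]$; then $M_0\cup\{uv\}$ is a matching of size at most $2$, Lemma~\ref{pathpartition8} produces a spanning path of $Q[0]$ through it with the required endpoints, and deleting $u,v$ from that path leaves two ports of opposite parity into $Q[1,k-1]$. No punctured subcube is ever needed, and the three cases (on whether $x,y$ lie in $Q[0]$) are each short.

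You instead put $u\in Q[0]$ and $v\in Q[1]$ and plan to use Lemma~\ref{delete} (or Lemma~\ref{hamilpath}) for spanning paths of $Q[0]-u$ and $Q[1]-v$. But for even $k$, Lemma~\ref{delete} forces \emph{both} endpoints of any spanning path of $Q[0]-u$ to have parity $\ne p(u)$; this is a count, not a choice, since $Q[0]-u$ has one more vertex on that side. So if $x\in V(Q[0])$ with $p(x)=p(u)$ --- which certainly occurs, e.g.\ $u=0\cdots 0$, $v=10\cdots 0$, $x=0\cdots 011$ --- then no spanning path of $Q[0]-u$ can have $x$ as an endpoint, and your concatenation scheme cannot start at $x$. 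Having $2(n-1)\ge 8$ neighbours does not help here; the obstruction is global, not local. Your treatment of the case $xy\in E(Q_n^k)$ is correct and pleasant, but it does not cover this configuration (in the example $x$ and $y$ can be taken far apart). To rescue your split you would need a spanning $2$-path of $Q[0]-u$ with $x$ as one endpoint (for instance by taking a spanning path of $Q[0]$ through a prescribed edge at $u$ via Lemma~\ref{hamilpath3} and then excising $u$), which is a genuinely different construction from the one you outlined; the cleaner route is the paper's --- keep $u$ and $v$ together.
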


\begin{proof} Since $|M|\leq1$, we may split $Q_n^{k}$ into subcubes $Q[0],\ldots,Q[k-1]$ such that $M\cap E_d=\emptyset$ and $u,v$ lie in the same subcube. By symmetry we may assume $uv\in E(Q[0])$. Let $x\in V(Q[i])$ and $y\in V(Q[j])$.

\textbf{Case 1.} $i=j=0$.

Since $M_0\cup\{uv\}$ is a matching with $|M_0\cup\{uv\}|\leq2$ and $xy\notin M_0\cup\{uv\}$, by Lemma \ref{pathpartition8} there is a spanning path $P_{x,y}'$ passing through $M_0\cup\{uv\}$ in $Q[0]$. Let $u_{0}',v_{0}'$ be neighbors of $u,v$ on $P_{x,y}'$ such that $u_{0}'\neq v$ and $v_{0}'\neq u$. Now $u_{1}',v_{k-1}'$ are distinct and $p(u_{1}')=p(u)\neq p(v)=p(v_{k-1}')$ when $k$ is even. Since $|M|\leq1$ and $M\cap E_d=\emptyset$, by Lemma \ref{hamilpathone} there is a spanning path $P_{u_{1}',v_{k-1}'}'$ passing through $M[1,k-1]$ in $Q[1,k-1]$. Hence, $P_{x,y}=P_{x,y}'+P_{u_{1}',v_{k-1}'}+\{u_{0}'u_{1}',v_{0}'v_{k-1}'\}-\{u,v\}$ is the desired spanning path in $Q_n^{k}-\{u,v\}$.

\textbf{Case 2.} $i=0, j\neq0$ or $i\neq0, j=0$. By symmetry assume $i=0, j\neq0$.

Since $k\geq4$, by symmetry assume $j\neq 1$. When $k$ is even, since $p(u)\neq p(v)$, exactly one of the conditions $p(x)\neq p(u)$ and $p(x)\neq p(v)$ holds true. By symmetry assume $p(x)\neq p(u)$. Since $M_0\cup\{uv\}$ is a matching with $|M_0\cup\{uv\}|\leq2$ and $xu\notin M_0\cup\{uv\}$, by Lemma \ref{pathpartition8} there is a spanning path $P_{x,u}$ passing through $M_0\cup\{uv\}$ in $Q[0]$. Choose the neighbor $v_{0}'$ of $v$ on $P_{x,u}$ satisfaying $v_{0}'\neq u$. Note that $v_{1}'\neq y$ and $p(v_{1}')\neq p(y)$ when $k$ is even. Since $M\cap E_d=\emptyset$, we have $v_{1}'y\notin M$. By Lemma \ref{hamilpathone} there is a spanning path $P_{v_{1}',y}$ passing through $M[1,j]$ in $Q[1,j]$ such that $E(P_{v_{1}',y})\cap E(Q[j])$ forms a spanning path or 2-path in $Q[j]$. If $j=k-1$, let $P_{x,y}=P_{x,u}[x,v_{0}']+P_{v_{1}',y}+\{v_{0}'v_{1}'\}$. If $j<k-1$, we may choose an edge $r_{j}r_{j}'\in E(P_{v_{1}',y})\cap E(Q[j])\setminus M_{j}$ such that $r_{j+1}r_{j+1}'\notin M_{j+1}$. By Lemma \ref{hamilpath2} there exists a spanning path $P_{r_{j+1},r_{j+1}'}$ passing through $M[j+1,k-1]$ in $Q[j+1,k-1]$. Let $P_{x,y}=P_{x,u}[x,v_{0}']+P_{v_{1}',y}+P_{r_{j+1},r_{j+1}'}+\{v_{0}'v_{1}',r_{j}r_{j+1},r_{j}'r_{j+1}'\}-\{r_{j}r_{j}'\}$. Now $P_{x,y}$ is the desired spanning path in $Q_n^{k}-\{u,v\}$.

\textbf{Case 3.} $i\neq0$ and $j\neq0$, by symmetry assume $i\leq j$.

Without loss of generality, assume $p(x)=p(v)\neq p(u)=p(y)$ when $k$ is even. Since $u,v$ both have $2(n-1)\geq8$ neighbors in $Q[0]$, choose neighbors $u_{0}',v_{0}'$ of $u,v$ in $Q[0]$ such that $u,u_{0}',v,v_{0}'$ are distinct, and $\{u_{0}',v_{0}',u_{1}',v_{k-1}'\}\cap V(M)=\emptyset$, and $u_{1}'\neq x, v_{k-1}'\neq y$. Thus $M_0\cup\{uv,uu_{0}',vv_{0}'\}$ is a linear forest in $Q[0]$. By Theorem \ref{forest} there is a Hamiltonian cycle $C_0$ containing $M_0\cup\{uv,uu_{0}',vv_{0}'\}$ in $Q[0]$.

If $i< j$, since $u_{1}',x,v_{k-1}',y$ are distinct and whenever $k$ is even, $p(x)\neq p(u)=p(u_{1}')$ and $p(y)\neq p(v)=p(v_{k-1}')$, by Lemma \ref{hamilpathone} there exist spanning paths $P_{x,u_{1}'}'$ passing through $M[1,i]$ in $Q[1,i]$ such that $E(P_{x,u_{1}'}')\cap E(Q[i])$ forms a spanning path or 2-path in $Q[i]$ and $P_{y,v_{k-1}'}$ passing through $M[j,k-1]$ in $Q[j,k-1]$, respectively. When $i+1=j$, let $P_{x,u_{1}'}=P_{x,u_{1}'}'$. When $i+1<j$, choose an edge $t_{i}t_{i}'\in E(P_{x,u_{1}'}')\cap E(Q[i])\setminus M_{i}$ such that $t_{i+1}t_{i+1}'\notin M_{i+1}$. By Lemma \ref{hamilpath2} there exists a spanning path $P_{t_{i+1},t_{i+1}'}$ passing through $M[i+1,j-1]$ in $Q[i+1,j-1]$. Let $P_{x,u_{1}'}=P_{x,u_{1}'}'+P_{t_{i+1},t_{i+1}'}+\{t_{i}t_{i+1},t_{i}'t_{i+1}'\}-\{t_{i}t_{i}'\}$. Hence, $P_{x,y}=C_0+P_{x,u_{1}'}+P_{y,v_{k-1}'}+\{u_{0}'u_{1}',v_{0}'v_{k-1}'\}-\{u,v\}$ is the desired spanning path in $Q_n^{k}-\{u,v\}$.

If $i=j$, by symmetry assume $i=j\neq k-1$. Since $|E(C_0)|-|M_0\cup\{uv,uu_{0}',vv_{0}'\}|\geq k^{n-1}-4>4$, we may choose $s_{0}s_{0}'\in E(C_0)\setminus(M_0\cup\{uv,uu_{0}',vv_{0}'\})$ such that $xy\neq s_{1}s_{1}'$ and $\{s_{1},s_{1}'\}\cap V(M_{1})=\emptyset$. Thus, $M[1,i]\cup\{s_{1}s_{1}'\}$ is a matching of size no more than $2$. By Lemma \ref{pathpartition8} there exist spanning paths $P_{x,y}'$ passing through $M[1,i]\cup\{s_{1}s_{1}'\}$ in $Q[1,i]$ and $P_{u_{k-1}',v_{k-1}'}$ passing through $M[i+1,k-1]$ in $Q[i+1,k-1]$, respectively. Now $P_{x,y}=C_0+P_{x,y}'+P_{u_{k-1}',v_{k-1}'}+\{u_{0}'u_{k-1}',v_{0}'v_{k-1}',s_{0}s_{1},s_{0}'s_{1}'\}-\{s_{0}s_{0}',s_{1}s_{1}'\}-\{u,v\}$ is the desired spanning path in $Q_n^{k}-\{u,v\}$.
\end{proof}

\begin{Lemma}\label{pathpartition5} For $n\geq4$ and even $k\geq4$, if $x,y,u,v$ are distinct vertices in $Q_n^{k}$ such that $p(x)= p(y)\neq p(u)= p(v)$, then there is a spanning 2-path $P_{x,y}+P_{u,v}$ in $Q_n^{k}$.
\end{Lemma}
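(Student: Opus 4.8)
The plan is to split $Q_n^k$ along a well-chosen direction $d$ into the $(n-1)$-dimensional subcubes $Q[0],\dots,Q[k-1]$, and to assemble $P_{x,y}$ and $P_{u,v}$ from spanning paths and spanning $2$-paths of consecutive blocks $Q[p,q]$, glued to one another across single $E_d$-edges. Since $p(x)=p(y)$ and $p(u)=p(v)$, neither a spanning $x,y$-path nor a spanning $u,v$-path can lie inside one subcube, so the design principle is: each of the two paths covers one contiguous run of subcubes as its ``main part'', and, wherever a block of subcubes would otherwise be left uncovered, one of the paths makes a single ``there-and-back'' detour over it; the routes are arranged so that every subcube meets $P_{x,y}\cup P_{u,v}$ in at most two sub-arcs. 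A subcube met in a single sub-arc is covered by a Hamiltonian path from Lemma \ref{hamilpatheven} (or Lemma \ref{delete} when one prescribed vertex has to be left for the other path); a subcube met in two sub-arcs --- two from one path, or one from each --- is covered by Lemma \ref{pathpartition3}, which applies because a spanning $2$-path $P_{s,s'}+P_{t,t'}$ of a subcube only needs $p(s)\neq p(s')$ and $p(t)\neq p(t')$, and the interface vertices $s,s',t,t'$, being endpoints of crossing $E_d$-edges, may be chosen with parities making this hold (the size $k^{n-1}\ge 64$ of each subcube leaves ample room); a run of subcubes containing no prescribed vertex that is to be swept in one pass is handled by Lemma \ref{hamilpath1} or Lemma \ref{hamilpath2}, whose structural conclusions on $E(P)\cap E(Q[\cdot])$ are exactly what is needed to enter and leave the run.

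To choose $d$, first note that $S_1\ne\emptyset$, where $S_1,S_2$ are the sets of coordinates in which $x,y$, resp.\ $u,v$, differ; picking $d\in S_1\cup S_2$ we may assume (interchanging the roles of the two pairs if necessary) that $x$ and $y$ lie in different subcubes. One then refines the choice of $d$ so that, in addition, the subcube of $x$ and the subcube of $y$ each contain at most one of $u,v$: in a configuration where some subcube held both endpoints of one of the two paths together with an endpoint of the other, that subcube would be forced to carry a spanning $3$-path, or a spanning $2$-path with a monochromatic pair of endpoints, and for neither is an earlier lemma available. With $d$ so chosen, $\{x,y,u,v\}$ occupies between two and four subcubes, and after a cyclic relabelling of $Q[0],\dots,Q[k-1]$ (and possibly a reflection of the cyclic order, or interchanging $x\leftrightarrow y$ or $u\leftrightarrow v$) one is reduced to a short list of position patterns, the principal ones being: two subcubes occupied; three occupied; four occupied in the cyclic order $x,y,u,v$; four occupied in the cyclic order $x,u,y,v$. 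For each pattern one writes down explicit routes of the ``main part $+$ detour'' shape --- for instance, in the last pattern, with $x\in Q[0]$, $u\in Q[a]$, $y\in Q[b]$, $v\in Q[c]$ and $0<a<b<c\le k-1$, one lets $P_{x,y}$ sweep the block $Q[0,b]$ and $P_{u,v}$ sweep the block $Q[a,c]$ together with a there-and-back detour over $Q[c+1,k-1]$ --- and then pins down the interface vertices one at a time, checking at each step that the parity hypotheses of the invoked lemma are met and that the chosen vertices are distinct from the prescribed ones and from one another. The degenerate sub-cases (an empty detour block because two special subcubes are adjacent, a ``main part'' reduced to a single subcube, $a=1$ or $c=k-1$, and so on) are dispatched along the same lines.

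The proof rests on two points I expect to require the most care. The first is the refined choice of $d$: this is where the hypothesis $p(x)=p(y)\neq p(u)=p(v)$ must really be used, to show that the bad configurations just described --- which, stripped of the parity condition, genuinely occur --- cannot arise for every admissible direction at once. The second is the bookkeeping in the interleaved four-subcube pattern: there a whole range of subcubes is met by one sub-arc of $P_{x,y}$ and one of $P_{u,v}$, so four interface vertices per such subcube must be fixed with mutually compatible parities in an order in which earlier choices constrain later ones, while the turnaround subcube of a detour must be covered by a single Hamiltonian path rather than a $2$-path; no individual step is deep, but keeping the whole system of choices consistent, together with all the degenerate sub-cases, is the technical heart of the argument.
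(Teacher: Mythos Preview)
Your proposal has a genuine gap at exactly the point you flagged as requiring the most care: the refined choice of $d$. You claim that, using $p(x)=p(y)\neq p(u)=p(v)$, one can always pick a direction separating one of the pairs while keeping each subcube from holding three of the four vertices. This is false. Take $n=4$, $k=4$ and
\[
x=0000,\quad y=1100,\quad u=0010,\quad v=0012.
\]
The parity condition holds, $x$ and $y$ differ only in coordinates $1,2$, and $u$ and $v$ differ only in coordinate $4$; for each of $d\in\{1,2,4\}$ one subcube contains three of the four vertices ($\{x,u,v\}$ when $d\in\{1,2\}$, and $\{x,y,u\}$ when $d=4$), while $d=3$ separates neither pair. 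So the ``three in one subcube'' pattern is unavoidable and must be treated.

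The fix is a tool you already cite: Lemma~\ref{delete}. If, say, $x,u,v$ share $Q[0]$, then $p(x)\neq p(u)=p(v)$ gives a spanning $u,v$-path in $Q[0]-x$, and $x$ simply leaves $Q[0]$ via a single $E_d$-edge into the remaining blocks. Your assertion that such a subcube ``would be forced to carry a spanning $3$-path, or a spanning $2$-path with a monochromatic pair of endpoints, and for neither is an earlier lemma available'' overlooks precisely this move.

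The paper's proof is also organised quite differently and avoids your interleaved-blocks bookkeeping altogether. It makes no attempt to engineer $d$; it takes any direction separating $x$ from $y$, and its central device is to build a spanning path $P_{x,u}$ --- note the \emph{cross-pair} endpoints, which have opposite parity, so Lemma~\ref{hamilpath1} applies directly --- on an initial block, and then cut that path at a well-chosen edge $s_{\ell-1}r_{\ell-1}$ inside the last subcube of the block. One piece, extended forward, becomes part of $P_{x,y}$; the other becomes part of $P_{u,v}$. Because $P_{x,u}$ is bichromatic, the monochromatic-endpoint obstacle never arises, and each subsequent subcube is covered by a single spanning path (Lemma~\ref{hamilpath1} or, with one vertex reserved, Lemma~\ref{delete}) or in one sub-case a single application of Lemma~\ref{pathpartition3}. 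This eliminates the chain of four-interface-vertices-per-subcube choices that your plan would require in the interleaved pattern.
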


\begin{proof} Since $x,y$ are distinct, we may split $Q_n^{k}$ into subcubes $Q[0],\ldots,Q[k-1]$ such that $x$ and $y$ lie in different subcubes. By symmetry we may assume $x\in V(Q[0])$ and $y\in V(Q[m])$, where $m>0$. Let $u\in V(Q[i])$ and $v\in V(Q[j])$.

\textbf{Case 1.} $i=j$. By symmetry we may assume $0\leq i<m$.

If $i=0$, since $x,u,v$ are distinct vertices and $p(x)\neq p(u)= p(v)$, by Lemma \ref{delete} there is a spanning path $P_{u,v}$ in $Q[0]-x$. Since $k\geq4$, by symmetry we may assume $m\neq1$. Hence, $x_1\neq y$ and $p(x)= p(y)\neq p(x_1)$. By Lemma \ref{hamilpath1} there is a spanning path $P_{x_1,y}$ in $Q[1,k-1]$. Let $P_{x,y}=P_{x_1,y}+\{xx_1\}$.

If $i>0$, then let $s_i\in V(Q[i])$ such that $p(s_i)\neq p(u)= p(v)$. So $p(s_{i-1})=p(s_{i+1})\neq p(x)=p(y)$. By Lemma \ref{delete} there is a spanning path $P_{u,v}$ in $Q[i]-s_i$. By Lemma \ref{hamilpath1} there exist spanning paths $P_{x,s_{i-1}}$ in $Q[0,i-1]$ and $P_{y,s_{i+1}}$ in $Q[i+1,k-1]$. Let $P_{x,y}=P_{x,s_{i-1}}+P_{y,s_{i+1}}+\{s_{i-1}s_i,s_is_{i+1}\}$.

In the above two cases, $P_{x,y}+P_{u,v}$ is the desired spanning 2-path in $Q_n^{k}$.

\textbf{Case 2.} $i\neq j$. By symmetry we may assume $0\leq i< m$ and $i<j$.

If $j<m$, then by Lemma \ref{hamilpath1} there exists a spanning path $P_{x,u}$ in $Q[0,j-1]$ such that $E(P_{x,u})\cap E(Q[j-1])$ forms a spanning path or 2-path in $Q[j-1]$. Let $s_{j-1}r_{j-1}\in E(P_{x,u})\cap E(Q[j-1])$ where $s_{j-1}$ is closer to $x$ on $P_{x,u}$ than $r_{j-1}$. Moreover, since $|E(P_{x,u})\cap E(Q[j-1])|\geq k^{n-1}-2>5$, we may choose $s_{j-1}r_{j-1}$ such that $v\notin\{s_j,r_j\}$, $y\neq s_{j+1}$ and $p(s_{j-1})= p(v)$. Then $p(s_j)\neq p(r_j)= p(v)$ and $p(s_{j+1})\neq p(y)$. By Lemma \ref{delete} and Lemma \ref{hamilpath1} there exist spanning paths $P_{r_{j},v}$ in $Q[j]-s_j$ and $P_{s_{j+1},y}$ in $Q[j+1,k-1]$. Let $P_{x,y}=P_{x,u}[x,s_{j-1}]+P_{s_{j+1},y}+\{s_{j-1}s_j,s_js_{j+1}\}$ and $P_{u,v}=P_{x,u}[u,r_{j-1}]+P_{r_{j},v}+\{r_{j-1}r_j\}$.

Otherwise, $j\geq m$. By Lemma \ref{hamilpath1} there is a spanning path $P_{x,u}$ in $Q[0,m-1]$ such that $E(P_{x,u})\cap E(Q[m-1])$ forms a spanning path or 2-path in $Q[m-1]$. Let $s_{m-1}r_{m-1}\in E(P_{x,u})\cap E(Q[m-1])$ where $s_{m-1}$ is closer to $x$ on $P_{x,u}$ than $r_{m-1}$. If $j=m$, we may choose $s_{m-1}r_{m-1}$ such that $p(s_{m-1})= p(y)$ and $\{v,y\}\cap\{s_m,r_m\}=\emptyset$. Then $p(s_m)\neq p(y)$ and $p(r_m)\neq p(v)$. By Lemma \ref{pathpartition3} there is a spanning 2-path $P_{s_m,y}+P_{r_m,v}$ in $Q[m,k-1]$. Let $P_{x,y}=P_{x,u}[x,s_{m-1}]+P_{s_m,y}+\{s_{m-1}s_m\}$ and $P_{u,v}=P_{x,u}[u,r_{m-1}]+P_{r_m,v}+\{r_{m-1}r_m\}$. If $j>m$, we may choose $s_{m-1}r_{m-1}$ such that $y\notin\{s_m,r_m\}$ and $p(s_{m-1})\neq p(y)$. Then $p(s_m)= p(y)\neq p(r_m)$ and $p(r_{m+1})\neq p(v)$. By Lemma \ref{delete} and Lemma \ref{hamilpath1} there exist spanning paths $P_{s_{m},y}$ in $Q[m]-r_m$ and $P_{r_{m+1},v}$ in $Q[m+1,k-1]$. Let $P_{x,y}=P_{x,u}[x,s_{m-1}]+P_{s_m,y}+\{s_{m-1}s_m\}$ and $P_{u,v}=P_{x,u}[u,r_{m-1}]+P_{r_{m+1},v}+\{r_{m-1}r_m,r_mr_{m+1}\}$.

In the above three cases, $P_{x,y}+P_{u,v}$ is the desired spanning 2-path in $Q_n^{k}$.
\end{proof}

\begin{Lemma}\label{pathpartition11} Let $n\geq5$ and $M$ be a matching in $Q_n^{k}$ with $|M|\leq 2n-10$. If $uu'$, $vv'$ and $ww'$ are three disjoint edges in $Q_n^{k}$ such that $\{u,v,w\}\cap V(M)=\emptyset$ and $\{u'v',u'w',v'w'\}\cap M=\emptyset$, then there is a spanning 3-path $P_{u,u'}+P_{v,v'}+P_{w,w'}$ passing through $M$ in $Q_n^{k}$.
\end{Lemma}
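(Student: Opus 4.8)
The plan is to split $Q_n^k$ along a well-chosen dimension $d$ into the $(n-1)$-dimensional subcubes $Q[0],\dots,Q[k-1]$ and to build the three required paths arc by arc around the ring of subcubes, gluing consecutive arcs through $d$-edges. First I would pick $d$: among the at least $n-3$ dimensions that are not the direction of any of $uu'$, $vv'$, $ww'$, choose one for which $|M\cap E_d|$ is smallest; since $|M|\le 2n-10<2(n-3)$, this minimum is at most $1$, and it is $0$ whenever $n\le 6$ (in particular $M=\emptyset$ when $n=5$). After relabelling I may assume $M\cap E_d=\emptyset$, the case in which one matching edge is a $d$-edge being handled the same way by absorbing that edge into the arc that contains it. Because $d$ is not an edge direction, each of $uu'$, $vv'$, $ww'$ lies inside a single subcube, and since the subcubes form a ring I relabel them cyclically so that these three lie in $Q[a]$, $Q[b]$, $Q[c]$ with $0\le a\le b\le c\le k-1$ and so that $Q[c+1],\dots,Q[k-1],Q[0],\dots,Q[a-1]$ contain no special vertex.

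The heart of the argument is a case analysis on the pattern of $a,b,c$. If $a<b<c$, the ring is cut into three arcs, each containing exactly one of the edges; I route $P_{u,u'}$, $P_{v,v'}$, $P_{w,w'}$ through these three arcs, spanning them one at a time. Inside the single special subcube of an arc I take a spanning path with the two prescribed endpoints through its portion of $M$ — via Lemma \ref{hamilpath3} when that portion has at most one edge, and via Lemma \ref{pathpartition2} (applied in that $(n-1)$-cube) or Lemma \ref{hamilpathone} otherwise — and across the remaining subcubes of the arc I extend the path using Lemma \ref{hamilpath2} and Lemma \ref{hamilpath1}, absorbing the rest of $M$. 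Consecutive arcs are joined through $d$-edges of $E_d$ (which miss $M$), chosen to avoid the vertices already used and the vertices $u',v',w'$; when $k$ is even each such $d$-edge reverses parity, so the order in which the arcs are attached is governed by the data $p(u)\ne p(u')$, $p(v)\ne p(v')$, $p(w)\ne p(w')$. When exactly two of $a,b,c$ coincide, say $uu',vv'\subseteq Q[a]$ and $ww'\subseteq Q[c]$, the two edges in $Q[a]$ are handled jointly: Lemmas \ref{pathpartition2}, \ref{pathpartition3}, \ref{pathpartition5}, \ref{pathpartition7} (applied inside $Q[a]$, resp.\ inside $Q[a,q]$) furnish a spanning $2$-path of $Q[a]$ with endpoints $u,u',v,v'$ through $M_a$ — their hypotheses being exactly $\{u,v,w\}\cap V(M)=\emptyset$ and $\{u'v',u'w',v'w'\}\cap M=\emptyset$ — and then one of its two paths is pushed through the subcubes on one side of $Q[a]$ while $P_{w,w'}$ is dealt with on the other side as in the previous case.

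The remaining case $a=b=c$ is the crux: all six special vertices sit in the single subcube $Q[a]$, which may carry almost all of $M$ ($|M_a|\le 2n-10$). Since $k\ge4$, the other $k-1\ge3$ subcubes form a nonempty path $Q[a+1],\dots,Q[a-1]$ that only the three paths can reach, so at least one of them — say the $v$-path — must leave $Q[a]$, sweep this path of subcubes, and return. I would make $P_{v,v'}$ leave $Q[a]$ immediately at $v$ and return immediately at $v'$, so that $P_{v,v'}\cap Q[a]=\{v,v'\}$ and $P_{v,v'}$ consists of $v$, then a spanning path of the non-$Q[a]$ part of the ring through $M\setminus M_a$ from the $d$-neighbour of $v$ to the $d$-neighbour of $v'$, then $v'$; such a sweeping path is supplied by the extension lemmas (by Lemma \ref{hamilpath1} when $M\setminus M_a=\emptyset$, which always happens for $n=5$), after one further split of that segment if $M\setminus M_a$ is large. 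What remains in $Q[a]$ is then a spanning $2$-path of $Q[a]-\{v,v'\}$ with endpoint pairs $\{u,u'\}$ and $\{w,w'\}$ through $M_a$, which I would obtain either from a small additional split of $Q[a]$, or, as a robust fallback, from a Hamiltonian cycle of $Q[a]$ through the linear forest $M_a\cup\{uu',ww'\}$ of size at most $2n-8\le 2(n-1)-1$ (so that Theorem \ref{forest} applies, using triangle-freeness to see it is a linear forest) by deleting two suitably placed edges so that the two resulting paths have the required endpoints and avoid $v,v'$.

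The hard part will be this case $a=b=c$. One must first make sure the construction survives the degenerate situation in which $u,v,w$ differ only in the three edge directions, so that $a=b=c$ for every admissible $d$; if the argument above breaks there, the remedy is to split instead along one of the edge directions, turning that edge into a crossing edge at the cost of a routine but lengthy extra family of sub-cases. The genuinely delicate point is the even-$k$ parity bookkeeping: since each $d$-edge reverses parity, the sweeping path of $Q[a+1],\dots,Q[a-1]$ must cross $E_d$ the right number of times and the parities of its two end-neighbours are forced, and one has to verify that these forced parities are simultaneously compatible with $p(v)\ne p(v')$ and with the balance conditions needed to realize the spanning $2$-path of $Q[a]-\{v,v'\}$. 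Everything else is bookkeeping: tracking which subcube inherits which part of $M$ — always within the budget $2n-10\le 2(n-1)-4$ of the $(n-1)$-dimensional lemmas, with $M=\emptyset$ when $n=5$ — and choosing every gluing $d$-edge to miss $M$, the already-used vertices, and $u',v',w'$.
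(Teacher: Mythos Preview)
Your plan breaks down in the case $a=b=c$, and the ``robust fallback'' you describe does not rescue it. A Hamiltonian cycle of $Q[a]$ through $M_a\cup\{uu',ww'\}$ visits every vertex of $Q[a]$, in particular $v$ and $v'$; deleting two edges of it produces a spanning $2$-path of $Q[a]$, not of $Q[a]\setminus\{v,v'\}$, so the two resulting paths cannot ``avoid $v,v'$'' as you claim. No lemma in the paper's toolkit yields a spanning $2$-path of $Q[a]\setminus\{v,v'\}$ with prescribed endpoint pairs $\{u,u'\}$, $\{w,w'\}$ through a matching of size up to $2n-10$, and the ``small additional split of $Q[a]$'' you allude to runs into the same endpoint-control problem one level down. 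The parity compatibility you flag as delicate is also genuinely obstructive here: the $d$-neighbours of $v$ and of $v'$ in adjacent subcubes have the same parity when $k$ is even, so your sweeping path from one to the other across an even number of subcubes does not exist as stated.

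The paper sidesteps this by proving the lemma by induction on $n$, which you omit. In the $a=b=c$ case with $|M_a|\le|M|-2\le 2(n-1)-10$, the inductive hypothesis applied to the $(n-1)$-cube $Q[a]$ gives the spanning $3$-path $P_{u,u'}+P_{v,v'}+P_{w,w'}$ through $M_a$ in $Q[a]$ directly; one edge of it is then broken and the loose ends are routed through $Q[1,k-1]$ via Lemma~\ref{hamilpath2}. When $|M_a|\ge|M|-1$ (so at most one edge of $M$ lies outside $Q[a]$), the paper instead takes a Hamiltonian cycle of $Q[a]$ through the linear forest $M_a\cup\{uu',vv',ww'\}$ (Theorem~\ref{forest}), deletes all three prescribed edges to obtain a $3$-path $P_{u,v'}+P_{v,w'}+P_{w,u'}$ with the \emph{wrong} endpoint pairing, cuts each of these three paths at one further edge, and reconnects the six loose ends through the remaining subcubes using a spanning path on one side and a spanning $2$-path on the other (Lemmas~\ref{hamilpath1}, \ref{hamilpathone}, \ref{pathpartition3}, \ref{pathpartition5}, \ref{pathpartition9}), with a balance argument to handle parities when $k$ is even. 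Your cases $a<b<c$ and ``two coincide'' are essentially the paper's Cases~3 and~2 and are fine in outline, but the all-coincide case needs the inductive mechanism.
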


\begin{proof} We prove the lemma by induction on $n$. First, we prove the basis of the induction hypothesis. When $n=5$, now $M=\emptyset$. Split $Q_5^{k}$ into subcubes $Q[0],\ldots,Q[k-1]$ such that $xy\in E(Q[0])$, $uv\in E(Q[i])$ and $st\in E(Q[j])$. By symmetry we may assume $0\leq i\leq j\leq k-1$. Note that $x,y,u,v,s,t$ are distinct vertices and $p(x)\neq p(y)$, $p(u)\neq p(v)$, $p(s)\neq p(t)$ when $k$ is even.

If $0=i=j$, by Lemma \ref{pathpartition2} there exists a spanning 2-path $P_{x,y}'+P_{u,v}'$ passing through $st$ in $Q[0]$. Without loss of generality, assume $st\in E(P_{x,y}')$. Choose neighbors $s_{0}',t_{0}'$ of $s,t$ on $P_{x,y}'$ such that $s_{0}'\neq t$ and $t_{0}'\neq s$. Note that $s_{1}',t_{1}'$ are distinct and $p(s_{1}')\neq p(t_{1}')$ when $k$ is even. By Lemma \ref{hamilpath1} there exists a spanning path $P_{s_{1}',t_{1}'}$ in $Q[1,k-1]$. Let $P_{x,y}=P_{x,y}'+P_{s_{1}',t_{1}'}+\{s_{0}'s_{1}',t_{0}'t_{1}'\}-\{s,t\}$, $P_{u,v}=P_{u,v}'$ and $P_{s,t}=st$. If $0=i\neq j$ or $0\neq i=j$, without loss of generality, we may assume $0=i\neq j$. By Lemma \ref{pathpartition3} there exists a spanning 2-path $P_{x,y}+P_{u,v}$ in $Q[0,j-1]$ and by Lemma \ref{hamilpath1} there is a spanning path $P_{s,t}$ in $Q[j,k-1]$. If $i\neq0$, $j\neq0$ and $i\neq j$, without loss of generality, we may assume $0<i<j\leq k-1$. By Lemma \ref{hamilpath1} there exist spanning paths $P_{x,y}$ in $Q[0,i-1]$, $P_{u,v}$ in $Q[i,j-1]$ and $P_{s,t}$ in $Q[j,k-1]$, respectively. In the above three cases, $P_{x,y}+P_{u,v}+P_{s,t}$ is the desired spanning 3-path in $Q_5^{k}$.

Suppose that the lemma holds for $n-1$. We are to show that it holds for $n(\geq6)$. Select an integer $d\in\{1,\ldots,n\}$ such that $\{uu',vv',ww'\}\cap E_d=\emptyset$, and moreover, $|M\cap E_d|$ is as small as possible. Since $|M|\leq2n-10=2(n-3)-4$, we have $|M\cap E_d|\leq1$. When $|M\cap E_d|=1$, there are at least four ways to choose such an integer. Moreover, we may choose $d$ such that $\{u',v',w'\}\cap\{s_m,s_{m+1}\}=\emptyset$, where $\{s_ms_{m+1}\}=M\cap E_d$. Now $u,v,w,u',v',w',s_m,s_{m+1}$ are distinct vertices. Split $Q_n^{k}$ into subcubes $Q[0],\ldots,Q[k-1]$ by $E_d$. Since $uu'$, $vv'$ and $ww'$ are edges, by symmetry we may assume $uu'\in E(Q[0])$, $vv'\in E(Q[i])$ and $ww'\in E(Q[j])$, where $0\leq i\leq j\leq k-1$.

\textbf{Case 1.} $uu',vv'$ and $ww'$ lie in the same subcube. Without loss of generality assume all lie in $Q[0]$.

\textbf{Subcase 1.1.} $|M_0|\leq|M|-2$.

Since $|M_0|\leq|M|-2\leq2(n-1)-10$, by the induction hypothesis there is a spanning 3-path $P_{u,u'}^{0}+P_{v,v'}^{0}+P_{w,w'}^{0}$ passing through $M_0$ in $Q[0]$. Since $|M\cap E_d(0,1)|+|M\cap E_d(k-1,0)|\leq|M\cap E_d|\leq1$, by symmetry we may assume $M\cap E_d(k-1,0)=\emptyset$.

If $M\cap E_d(0,1)=\emptyset$, since $|E(P_{u,u'}^{0}+P_{v,v'}^{0}+P_{w,w'}^{0})|-|M_0|-|M_1|\geq k^{n-1}-3-(2n-10)>1$, there exists an edge $t_0t_0'\in E(P_{u,u'}^{0}+P_{v,v'}^{0}+P_{w,w'}^{0})\setminus M_0$ such that $t_1t_1'\notin M_1$. If $M\cap E_d(0,1)=\{s_0s_1\}$, let $t_0=s_0$ and $t_0t_0'$ be an edge on $P_{u,u'}^{0}+P_{v,v'}^{0}+P_{w,w'}^{0}$. Since $s_0s_1\in M$, we have $\{t_0t_0',t_1t_1'\}\cap M=\emptyset$.

In the two cases above, without loss of generality assume $t_0t_0'\in E(P_{u,u'}^{0})$. Since $|M_h|\leq2n-10<2n-4$ for every $h\geq1$ and
$|M\cap E_d|\leq1$, Lemma \ref{hamilpath2} implies that there is a spanning path $P_{t_1,t_1'}$ passing through $M[1,k-1]$ in $Q[1,k-1]$. Let $P_{u,u'}=P_{u,u'}^{0}+P_{t_1,t_1'}+\{t_0t_1,t_0't_1'\}-\{t_0t_0'\}$, $P_{v,v'}=P_{v,v'}^{0}$ and $P_{w,w'}=P_{w,w'}^{0}$. Hence, $P_{u,u'}+P_{v,v'}+P_{w,w'}$ is the desired spanning 3-path in $Q_n^{k}$.

\textbf{Subcase 1.2.} $|M|-1\leq|M_0|\leq|M|$. Now $|M\cap E_d|+\sum_{h=1}^{k-1}|M_h|\leq1$.

Since $M_0\cup\{uu',vv',ww'\}$ forms a linear forest with $|M_0\cup\{uu',vv',ww'\}|\leq2(n-1)-5$, Theorem \ref{forest} implies that there is a Hamiltonian cycle $C_0$ containing $M_0\cup\{uu',vv',ww'\}$ in $Q[0]$. Since $k\geq4$ and $|M\cap E_d|+\sum_{h=1}^{k-1}|M_h|\leq1$, by symmetry assume $M\cap E_d(k-1,0)=E_d(k-2,k-1)=M_{k-1}=\emptyset$.

The order of $u$ and $u'$(or $v$ and $v'$, or $w$ and $w'$) does not affect the proof. Without loss of generality, assume that the six points are arranged in the clockwise order of $u,u',v,v',w,w'$ in $C_0$; see Figure 1. Note that $C_0-\{uu',vv',ww'\}$ is a spanning 3-path passing through $M_0$ in $Q[0]$. Denote it by $P_{u,v'}+P_{v,w'}+P_{w,u'}$. Since $\{u,v,w\}\cap V(M)=\emptyset$ and $\{u'v',u'w',v'w'\}\cap M=\emptyset$, there exist edges $a_0a_0'\in E(P_{u,v'})\setminus M_0$, $b_0b_0'\in E(P_{v,w'})\setminus M_0$ and $c_0c_0'\in E(P_{w,u'})\setminus M_0$. Moreover, when $M\cap E_d(0,1)=\{s_0s_1\}$, we can choose the above three edges such that $s_0\in\{a_0,a_0',b_0,b_0',c_0,c_0'\}$. Without loss of generality, assume that $a_0$ is closer to $u$ than $a_0'$, and $b_0$ is closer to $v$ than $b_0'$, and $c_0$ is closer to $w$ than $c_0'$; see Figure 1. Now $P_{u,v'}+P_{v,w'}+P_{w,u'}-\{a_0a_0',b_0b_0',c_0c_0'\}$ is a spanning 6-path. Denote it by $P_{u,a_0}+P_{a_0',v'}+P_{v,b_0}+P_{b_0',w'}+P_{w,c_0}+P_{c_0',u'}$. Note that $a_0,a_0',b_0,b_0',c_0,c_0'$ are distinct. When $k$ is even, since $\{a_0,a_0',b_0,b_0',c_0,c_0'\}$ is balanced, at least one pair of vertices in $\{\{a_0,c_0'\},\{a_0'b_0\},\{b_0'c_0\}\}$, say $\{a_0,c_0'\}$, is balanced. So $\{a_0',b_0,b_0',c_0\}$ is also balanced.

First suppose $M\cap E_d(0,1)=\emptyset$. By Lemma \ref{pathpartition3} or Lemma \ref{pathpartition5} there is a spanning 2-path $P_{a_{k-1}',b_{k-1}}+P_{b_{k-1}',c_{k-1}}$ in $Q[k-1]$. If $a_1c_1'\notin M_1$, by Lemma \ref{hamilpathone} there is a spanning path $P_{a_{1},c_{1}'}$ passing through $M[1,k-2]$ in $Q[1,k-2]$. Let $P_{u,u'}=P_{u,a_0}+P_{c_0',u'}+P_{a_{1},c_{1}'}+\{a_0a_1,c_0'c_1'\}$, $P_{v,v'}=P_{a_0',v'}+P_{v,b_0}+P_{a_{k-1}',b_{k-1}}+\{a_0'a_{k-1}',b_0b_{k-1}\}$ and $P_{w,w'}=P_{w,c_0}+P_{b_0',w'}+P_{b_{k-1}',c_{k-1}}+\{c_0c_{k-1},b_0'b_{k-1}'\}$; see Figure 1. If $a_1c_1'\in M_1$, since $|E(P_{u,v'}+P_{v,w'}+P_{w,u'})|-|M_0\cup\{a_0a_0',b_0b_0',c_0c_0'\}|=k^{n-1}-3-(2n-10+3)>4$, we may choose an edge $r_0r_0'\in E(P_{u,v'}+P_{v,w'}+P_{w,u'})\setminus(M_0\cup\{a_0a_0',b_0b_0',c_0c_0'\})$ such that $\{r_1,r_1'\}\cap\{a_1,c_1'\}=\emptyset$. Without loss of generality assume $r_0r_0'\in E(P_{u,a_0})$. By Lemma \ref{pathpartition9} there is a spanning path $P_{r_1,r_1'}$ in $Q[1]-\{a_1,c_1'\}$. Choose an edge $t_1t_1'$ on $P_{r_1,r_1'}$, and by Lemma \ref{hamilpath2} there is a spanning path $P_{t_{2},t_{2}'}$ in $Q[2,k-2]$. Let $P_{u,u'}=P_{u,a_0}+P_{c_0',u'}+P_{r_1,r_1'}+P_{t_{2},t_{2}'}+\{a_0a_1,c_0'c_1',a_{1}c_{1}',r_0r_1,r_0'r_1',t_{1}t_{2},t_{1}'t_{2}'\}-\{r_0r_0',t_1t_1'\}$, $P_{v,v'}=P_{a_0',v'}+P_{v,b_0}+P_{a_{k-1}',b_{k-1}}+\{a_0'a_{k-1}',b_0b_{k-1}\}$ and $P_{w,w'}=P_{w,c_0}+P_{b_0',w'}+P_{b_{k-1}',c_{k-1}}+\{c_0c_{k-1},b_0'b_{k-1}'\}$. Now $P_{u,u'}+P_{v,v'}+P_{w,w'}$ is the desired spanning 3-path in $Q_n^{k}$.

\begin{figure}[h]
\begin{center}
\includegraphics[scale=0.5]{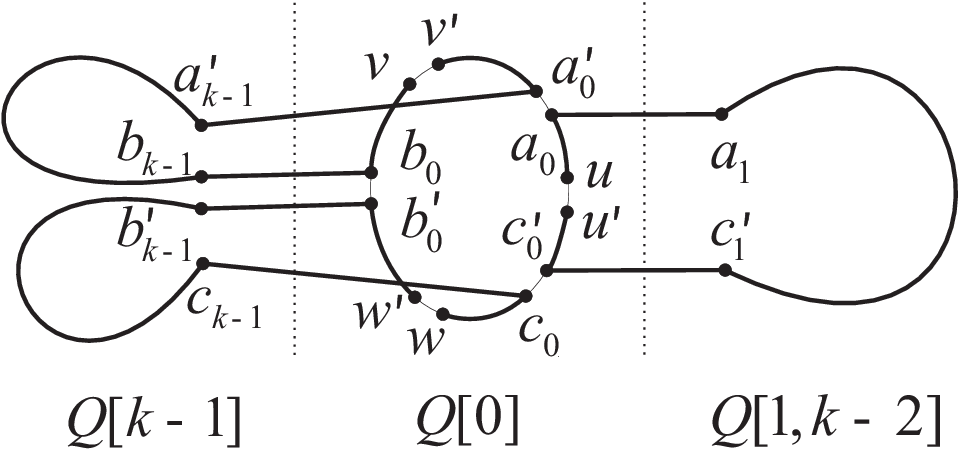}\\{Figure 1. Illustration for the proof of Subcase 1.2 in Lemma \ref{pathpartition11}.}
\end{center}
\end{figure}

Next suppose $M\cap E_d(0,1)=\{s_0s_1\}$. Now $M[1,k-2]=\emptyset$. If $s_0\in\{a_0,c_0'\}$, by Lemma \ref{hamilpath1} there is a spanning path $P_{a_{1},c_{1}'}$ in $Q[1,k-2]$, and by Lemma \ref{pathpartition3} or Lemma \ref{pathpartition5} there is a spanning 2-path $P_{a_{k-1}',b_{k-1}}+P_{b_{k-1}',c_{k-1}}$ in $Q[k-1]$. Let $P_{u,u'}=P_{u,a_0}+P_{c_0',u'}+P_{a_{1},c_{1}'}+\{a_0a_1,c_0'c_1'\}$, $P_{v,v'}=P_{a_0',v'}+P_{v,b_0}+P_{a_{k-1}',b_{k-1}}+\{a_0'a_{k-1}',b_0b_{k-1}\}$ and $P_{w,w'}=P_{w,c_0}+P_{b_0',w'}+P_{b_{k-1}',c_{k-1}}+\{c_0c_{k-1},b_0'b_{k-1}'\}$; see Figure 1. If $s_0\in\{a_0',b_0,b_0',c_0\}$, by Lemma \ref{pathpartition3} or Lemma \ref{pathpartition5} there is a spanning 2-path $P_{a_{1}',b_{1}}+P_{b_{1}',c_{1}}$ in $Q[1]$, and by Lemma \ref{hamilpath1} there is a spanning path $P_{a_{k-1},c_{k-1}'}$ in $Q[2,k-1]$. Let $P_{u,u'}=P_{u,a_0}+P_{c_0',u'}+P_{a_{k-1},c_{k-1}'}+\{a_0a_{k-1},c_0'c_{k-1}'\}$, $P_{v,v'}=P_{a_0',v'}+P_{v,b_0}+P_{a_{1}',b_{1}}+\{a_0'a_{1}',b_0b_{1}\}$ and $P_{w,w'}=P_{w,c_0}+P_{b_0',w'}+P_{b_{1}',c_{1}}+\{c_0c_{1},b_0'b_{1}'\}$. Now $P_{u,u'}+P_{v,v'}+P_{w,w'}$ is the desired spanning 3-path in $Q_n^{k}$.

\textbf{Case 2.} Two edges in $\{uu',vv',ww'\}$ lie in the same subcube. Without loss of generality assume $uu',vv'$ lie in $Q[0]$ and $ww'$ lie in $Q[j]$.

If $|M\cap E_d|=1$ and the edge in $M\cap E_d$ connects $Q[0]$ and $Q[j]$. Now $j=1$ and $M\cap E_d=\{s_0s_1\}$. Each edge in $M_0$ has at most one vertex adjacent to $s_0$; otherwise, it would form a 3-cycle, which contradicts the condition that $k\geq4$. Since $s_0$ has $2(n-1)$ neighbors in $V(Q[0])$ and $2(n-1)>2n-10+3$ , we may choose a neighbor $s_0'$ of $s_0$ in $V(Q[0])$ such that $s_0'\notin V(M_0)$, $s_1'\notin \{w,w'\}$ and $s_1'w'\notin M_1$. Then $M_0\cup\{s_0s_0'\}$ is a matching, $\{s_0s_0',s_1s_1'\}\cap M=\emptyset$ and $\{w,s_1\}\notin V(M_1)$. Since $|M_0\cup\{s_0s_0'\}|\leq2n-10+1\leq2(n-1)-7$ and $|M_1|\leq2n-10$, by Lemma \ref{pathpartition2} there exist spanning 2-paths $P_{u,u'}^{0}+P_{v,v'}^{0}$ passing through $M_0\cup\{s_0s_0'\}$ in $Q[0]$ and $P_{w,w'}^{1}+P_{s_1,s_1'}$ passing through $M_1$ in $Q[1]$, respectively. Since $|E(P_{w,w'}^{1}+P_{s_1,s_1'})|-|M_1|-|M_2|\geq k^{n-1}-2-(2n-10)>1$, we may choose an edge $r_1r_1'\in E(P_{w,w'}^{1}+P_{s_1,s_1'})\setminus M_1$ such that $r_2r_2'\notin M_2$. Since $|M_h|\leq2n-10<2n-4$ for every $h\in\{j,\ldots,k-1\}$, by Lemma \ref{hamilpath2} there is a spanning path $P_{r_2,r_2'}$ passing through $M[2,k-1]$ in $Q[2,k-1]$. Without loss of generality, assume $s_0s_0'\in E(P_{u,u'}^{0})$ and $r_1r_1'\in E(P_{w,w'}^{1})$. Let $P_{u,u'}=P_{u,u'}^{0}+P_{s_1,s_1'}+\{s_0s_1,s_0's_1'\}-\{s_0s_0'\}$, $P_{v,v'}=P_{v,v'}^{0}$ and $P_{w,w'}=P_{w,w'}^{1}+P_{r_2,r_2'}+\{r_1r_{2},r_1'r_{2}'\}-\{r_1r_1'\}$.

Otherwise, by symmetry we may assume $M\cap E_d(k-1,0)=M\cap E_d(j-1,j)=\emptyset$. By Lemma \ref{pathpartition2} there exists a spanning 2-path $P_{u,u'}^{0}+P_{v,v'}^{0}$ passing through $M_0$ in $Q[0]$ and by Lemma \ref{hamilpath2} there exists a spanning path $P_{w,w'}$ passing through $M[j,k-1]$ in $Q[j,k-1]$. If $j=1$, let $P_{u,u'}=P_{u,u'}^{0}$ and $P_{v,v'}=P_{v,v'}^{0}$. Otherwise, $j>1$. If $M\cap E_d(0,1)=\emptyset$, since $|E(P_{u,u'}^{0}+P_{v,v'}^{0})|-|M_0|-|M_1|\geq k^{n-1}-2-(2n-10)>1$, we may choose an edge $r_0r_0'\in E(P_{u,u'}^{0}+P_{v,v'}^{0})\setminus M_0$ such that $r_1r_1'\notin M_1$. Without loss of generality, assume $r_0r_0'\in E(P_{u,u'}^{0})$. If $M\cap E_d(0,1)=\{s_0s_1\}$, without loss of generality, assume $s_0\in V(P_{u,u'}^{0})$. Let $s_0=r_0$ and $r_0'$ be a neighbor of $r_0$ on $P_{u,u'}^{0}$. Since $s_0s_1\in M$, we have $\{r_0r_0',r_1r_1'\}\cap M=\emptyset$. By Lemma \ref{hamilpath2} there exists a spanning path $P_{r_1,r_1'}$ passing through $M[1,j-1]$ in $Q[1,j-1]$. Let $P_{u,u'}=P_{u,u'}^{0}+P_{r_1,r_1'}+\{r_0r_1,r_0'r_1'\}-\{r_0r_0'\}$ and $P_{v,v'}=P_{v,v'}^{0}$.

In the above two cases, $P_{u,u'}+P_{v,v'}+P_{w,w'}$ is the desired spanning 3-path passing through $M$ in $Q_n^{k}$.

\textbf{Case 3.} $uu',vv'$ and $ww'$ lie in different subcubes. Without loss of generality, we assume $0<i<j\leq k-1$.

If $|M\cap E_d|=1$ and the edge in $M\cap E_d$ connects two of $Q[0]$, $Q[i]$ and $Q[j]$. Without loss of generality, assume it connects $Q[0]$ and $Q[i]$. Now $i=1$ and $M\cap E_d=\{s_0s_1\}$. Since $s_0$ has $2(n-1)$ neighbors in $V(Q[0])$ and $2(n-1)>2n-10+3$ , we may choose a neighbor $s_0'$ of $s_0$ in $V(Q[0])$ such that $s_0'\notin V(M_0)$, $s_1'\notin \{v,v'\}$ and $s_1'v'\notin M_1$. Then $M_0\cup\{s_0s_0'\}$ is a matching, $\{s_0s_0',s_1s_1'\}\cap M=\emptyset$ and $\{v,s_1\}\notin V(M_1)$. By Lemma \ref{hamilpath2} there exist spanning paths $P_{u,u'}^{0}$ passing through $M_0\cup\{s_0s_0'\}$ in $Q[0]$ and $P_{w,w'}$ passing through $M[j,k-1]$ in $Q[j,k-1]$. By Lemma \ref{pathpartition2} there exists a spanning 2-path $P_{v,v'}^{1}+P_{s_1,s_1'}$ passing through $M_1$ in $Q[1]$. If $j=2$, let $P_{u,u'}=P_{u,u'}^{0}+P_{s_1,s_1'}+\{s_0s_1,s_0's_1'\}-\{s_0s_0'\}$ and $P_{v,v'}=P_{v,v'}^{1}$. If $j>2$, since $|E(P_{v,v'}^{1}+P_{s_1,s_1'})|-|M_1|-|M_2|\geq k^{n-1}-2-(2n-10)>1$, we may choose an edge $r_1r_1'\in E(P_{v,v'}^{1}+P_{s_1,s_1'})\setminus M_1$ such that $r_2r_2'\notin M_2$. By Lemma \ref{hamilpath2} there is a spanning path $P_{r_2,r_2'}$ passing through $M[2,j-1]$ in $Q[2,j-1]$. Without loss of generality, assume $r_1r_1'\in E(P_{v,v'}^{1})$. Let $P_{u,u'}=P_{u,u'}^{0}+P_{s_1,s_1'}+\{s_0s_1,s_0's_1'\}-\{s_0s_0'\}$ and $P_{v,v'}=P_{v,v'}^{1}+P_{r_2,r_2'}+\{r_1r_{2},r_1'r_{2}'\}-\{r_1r_1'\}$. Now $P_{u,u'}+P_{v,v'}+P_{w,w'}$ is a spanning 3-path passing through $M$ in $Q_n^{k}$.

Otherwise, $|M\cap E_d|=1$ and the edge in $M\cap E_d$ connects at most one of $Q[0]$, $Q[i]$ and $Q[j]$, or $|M\cap E_d|=0$. Now by symmetry, we may assume $M\cap E_d(k-1,0)=M\cap E_d(i-1,i)=M\cap E_d(j-1,j)=\emptyset$. By Lemma \ref{hamilpath2} there exist spanning paths $P_{u,u'}$ passing through $M[0,i-1]$ in $Q[0,i-1]$, $P_{v,v'}$ passing through $M[i,j-1]$ in $Q[i,j-1]$ and $P_{w,w'}$ passing through $M[j,k-1]$ in $Q[j,k-1]$, respectively. Hence, $P_{u,u'}+P_{v,v'}+P_{w,w'}$ is a spanning 3-path passing through $M$ in $Q_n^{k}$.
\end{proof}

\begin{Lemma}\label{pathpartition12} For $n\geq4$, let $x,y$ be distinct vertices in $Q_n^{k}$ with $d_{Q_n^{k}}(x,y)\leq3$, and $p(x)\neq p(y)$ whenever $k$ is even. If $M$ is a matching in $Q_n^{k}$ such that $|M|\leq2n-8$ and $xy\notin M$, then there is a spanning $x,y$-path passing through $M$ in $Q_n^{k}$.
\end{Lemma}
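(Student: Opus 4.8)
The plan is to prove Lemma~\ref{pathpartition12} by induction on $n$. For the base cases $n\in\{4,5\}$ one has $|M|\le 2n-8\le 2$, so the conclusion is immediate from Lemma~\ref{hamilpathtwo}; so assume $n\ge 6$ and that the statement holds for $n-1$. The crucial step is the choice of splitting dimension, and this is exactly where the hypothesis $d_{Q_n^k}(x,y)\le 3$ enters. Let $D$ be the set of coordinates in which $x$ and $y$ differ; since each coordinate of $D$ contributes at least $1$ to $d_{Q_n^k}(x,y)$, we get $|D|\le 3$, so $x$ and $y$ agree on a set $N$ of at least $n-3$ coordinates. Because $|M|\le 2n-8$ and $(2n-8)/(n-3)<2$, some $d\in N$ satisfies $|M\cap E_d|\le 1$; split $Q_n^k$ by $E_d$ into $Q[0],\dots,Q[k-1]$. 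Since $d\in N$ we may assume $x,y\in V(Q[0])$, and since $k\ge 4>3\ge d_{Q_n^k}(x,y)$ no shortest $x,y$-path changes coordinate $d$, so $d_{Q[0]}(x,y)=d_{Q_n^k}(x,y)\le 3$. Using the cyclic shifts of dimension $d$ and, if needed, the reflection $i\mapsto -i$ (all automorphisms fixing $Q[0]$), I would also normalise so that the unique edge of $M\cap E_d$, if any, either lies in $E_d(0,1)$ or lies strictly inside $Q[1,k-1]$.

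If $|M_0|\le 2n-10=2(n-1)-8$, apply the induction hypothesis inside $Q[0]$ (legitimate because $n-1\ge 5$, $d_{Q[0]}(x,y)\le 3$ and $xy\notin M_0$) to obtain a spanning $x,y$-path $P'$ of $Q[0]$ through $M_0$, and then absorb $Q[1,k-1]$ exactly as in the proofs of Lemmas~\ref{pathpartition8} and \ref{hamilpath2}: choose an edge $s_0s_0'\in E(P')\setminus M_0$ — taking $s_0s_1$ to be the edge of $M\cap E_d$ when that edge lies in $E_d(0,1)$, and otherwise choosing $s_0s_0'$ so that $s_1s_1'\notin M_1$, which is possible since $|E(P')|=k^{n-1}-1$ dwarfs $|M_0|+|M_1|$ — invoke Lemma~\ref{hamilpath2} for a spanning $s_1,s_1'$-path of $Q[1,k-1]$ through $M[1,k-1]$ (valid since $|M_i|\le 2n-8<2n-4$ for $i\ge 1$ and $|M\cap E_d(i,i+1)|\le 1$), and splice it into $P'$ along $s_0s_1$ and $s_0's_1'$.

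The hard part is the opposite case $|M_0|\ge 2n-9$: then $|M\setminus M_0|\le 1$, so at most one edge of $M$ lies outside $Q[0]$. I would delete one edge $e_1$ of $M_0$ (and a second edge $e_2$ when $|M_0|=2n-8$) so the reduced matching has size $2(n-1)-8$, apply the induction hypothesis in $Q[0]$ to get a spanning $x,y$-path $P'$ through that reduced matching, and then reinstate $e_1$ (and $e_2$) while absorbing $Q[1,k-1]$ through reroutings that leave $Q[0]$ only via its two neighbours $Q[1]$ and $Q[k-1]$: if $e_1=ab\notin E(P')$, exchange a suitable pair $\{aa',bb'\}$ of $P'$-edges for $ab$ to turn $P'$ into a spanning $2$-path of $Q[0]$ whose new endpoints $a',b'$ have opposite parity, push out to the copies $a_1',b_1'$ in $Q[1]$, span $Q[1,k-2]$ between them by Lemma~\ref{hamilpathone}, and return; treat $e_2$ symmetrically through $Q[k-1]$, using Lemma~\ref{hamilpath3} to absorb the at most one stray matching edge that may sit there; and when $e_i$ is already on $P'$, absorb the relevant subcubes by the threading of the previous paragraph instead. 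I expect this case to carry essentially all of the difficulty. It is genuinely delicate bookkeeping rather than a formality: one must choose the deleted edges and the reroute points so that the two detours are inserted at disjoint places on $P'$, so that each detour creates endpoints distinct from $x$ and $y$ and of the parity required by Lemma~\ref{hamilpathone}, and so that the single edge of $M\setminus M_0$ is accommodated. The configurations in which $x$ or $y$ itself lies in $V(M_0)$ (forcing the first or last edge of the path), or in which the two detours would be forced to collide, appear to need to be peeled off and handled by direct case analysis — for instance by leaving $Q[0]$ near the common neighbour and routing past the copies of the two deleted vertices with Lemma~\ref{pathpartition9}.
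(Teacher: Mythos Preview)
Your proposal follows the paper's route: induct on $n$, use $d(x,y)\le 3$ to find a splitting dimension $d$ with $|M\cap E_d|\le 1$ that keeps $x,y$ together in $Q[0]$, apply induction in $Q[0]$, and extend. The base cases and the sub-case $|M_0|\le 2n-10$ match the paper exactly.

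For $|M_0|\in\{2n-9,2n-8\}$ the paper's execution is cleaner than your two-sequential-detours plan and sidesteps the collision bookkeeping you were worried about. When one deleted edge $u_0v_0\notin E(P')$, the paper chooses neighbours $u_0',v_0'$ on $P'$ with \emph{exactly one} of them on $P'[u_0,v_0]$; this single choice forces the right topology and the parity $p(u_0')\ne p(v_0')$, after which Lemma~\ref{hamilpathone} (or Lemma~\ref{pathpartition3} when the stray edge is $w_0w_1\in E_d(0,1)$ with $d_{P'}(w_0,u_0v_0)>1$) finishes. When two deleted edges $u_0v_0,s_0t_0$ both miss $P'$, the paper does \emph{not} route them through $Q[1,k-2]$ and $Q[k-1]$ separately; instead it takes all four neighbours $u_0',v_0',s_0',t_0'$ on the same side (closer to $x$), so that $P'-\{u_0u_0',v_0v_0',s_0s_0',t_0t_0'\}+\{u_0v_0,s_0t_0\}$ is automatically a spanning $3$-path $P_{x,a_0}+P_{y,b_0}+P_{c_0,d_0}$ of $Q[0]$ with $\{a_0,b_0,c_0,d_0\}$ balanced; a single spanning $2$-path $P_{a_1,c_1}+P_{b_1,d_1}$ in $Q[1]$ (Lemma~\ref{pathpartition3} or~\ref{pathpartition5}, the latter covering the same-parity pairing) then glues everything at once, and $Q[2,k-1]$ is threaded in afterwards. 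This is what buys the simplicity: you never have to analyse how the second reroute interacts with the first.

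Your worry about $x$ or $y$ lying in $V(M_0)$ is a non-issue: since $M_0$ is a matching of size at least $2n-9\ge 3$, at most two of its edges meet $\{x,y\}$, so one can always pick the deleted edge(s) disjoint from $\{x,y\}$, and the paper does exactly that.
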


\begin{proof} We prove the Lemma by induction on $n$. The Lemma holds for $n\leq5$ by Lemma \ref{pathpartition8}. Now consider $n\geq6$ and suppose the Lemma holds for $n-1$. Since $d_{Q_n^{k}}(x,y)\leq3$, $x,y$ are different at most in three positions. Since $|M|\leq2(n-3)-2$, there exists an integer $d\in\{1,\ldots,n\}$ such that $|M\cap E_d|\leq1$ and $x,y$ lie in the same subcube. Split $Q_n^{k}$ into subcubes $Q[0],\ldots,Q[k-1]$ by $E_d$. By symmetry we may assume $x,y\in V(Q[0])$ and $M\cap E_d(k-1,0)=\emptyset$.

\textbf{Case 1.} $|M_0|\leq2n-10$.

Since $|M_0|\leq2(n-1)-8$, by the induction hypothesis there is a spanning path $P_{x,y}'$ passing through $M_0$ in $Q[0]$. If $M\cap E_d(0,1)=\emptyset$, since $|E(P_{x,y}')|-|M_0|-|M_{1}|\geq k^{n-1}-1-(2n-8)>1$, there exists an edge $w_0w_0'\in E(P_{x,y}')\setminus M_0$ such that $w_{1}w_{1}'\notin M_{1}$. If $M\cap E_d(0,1)\neq\emptyset$, let $M\cap E_d(0,1)=\{w_0w_{1}\}$ and $w_0'$ be a neighbor of $w_0$ on $P_{x,y}'$, then $\{w_0w_0',w_{1}w_{1}'\}\cap M=\emptyset$. Since $|M_i|\leq2n-8<2n-4$ for $1\leq i\leq k-1$ and $|M\cap E_d|\leq1$, by Lemma \ref{hamilpath2} there is a spanning path $P_{w_{1},w_{1}'}$ passing through $M[1,k-1]$ in $Q[1,k-1]$. Hence, $P_{x,y}=P_{x,y}'+P_{w_{1},w_{1}'}+\{w_0w_{1},w_0'w_{1}'\}-\{w_0w_0'\}$ is a spanning path passing through $M$ in $Q_n^{k}$.

\textbf{Case 2.} $|M_0|=2n-9$. Now $|M\cap E_d(0,1)|+|M[1,k-1]|\leq1$.

Let $u_0v_0\in M_0$ such that $\{u_0,v_0\}\cap\{x,y\}=\emptyset$. By the induction hypothesis there is a spanning path $P_{x,y}'$ passing through $M_0\setminus\{u_0v_0\}$ in $Q[0]$. When $u_0v_0\in E(P_{x,y}')$, the conclusion holds by Case 1. When $u_0v_0\notin E(P_{x,y}')$, choose neighbors $u_0',v_0'$ of $u_0,v_0$ on $P_{x,y}'$, respectively, such that exactly one of $\{u_0,v_0\}$ lies on $P_{x,y}'[u_0,v_0]$. Since $\{u_0,v_0\}\cap \{x,y\}=\emptyset$, there are two choices of the above $u_0',v_0'$. Since $M$ is a matching and $u_0v_0\in M_0$, we have $\{u_0u_0',v_0v_0'\}\cap M=\emptyset$. Note that $u_0',v_0'$ are distinct, and whenever $k$ is even, $p(u_0')\neq p(v_0')$.

If $M\cap E_d(0,1)=\emptyset$, then since $|M[1,k-1]|\leq1$, we may choose $u_0'$ and $v_0'$ such that $u_1'v_1'\notin M_1$. If $M\cap E_d(0,1)=\{w_0w_{1}\}$ and $d_{P_{x,y}'}(w_0,u_0v_0)=1$, then we may choose $u_0',v_0'$ such that $w_0\in\{u_0',v_0'\}$. Now $|M[1,k-1]|=0$. Note that $u_1'$ and $v_1'$ are distinct and $p(u_1')\neq p(v_1')$ when $k$ is even. By Lemma \ref{hamilpathone} there exists a spanning path $P_{u_1',v_1'}$ passing through $M[1,k-1]$ in $Q[1,k-1]$. Hence, $P_{x,y}=P_{x,y}'+P_{u_1',v_1'}+\{u_0v_0,u_0'u_{1}',v_0'v_{1}'\}-\{u_0u_0',v_0v_0'\}$ is the desired spanning path in $Q_n^{k}$. If $M\cap E_d(0,1)=\{w_0w_{1}\}$ and $d_{P_{x,y}'}(w_0,u_0v_0)>1$, then we may choose neighbors $u_0',v_0',w_0'$ of $u_0,v_0,w_0$ on $P_{x,y}'$, respectively, such that $u_0',v_0',w_0,w_0'$ are distinct. Recall that $|M[1,k-1]|=0$. By Lemma \ref{pathpartition3} there is a spanning 2-path $P_{u_1',v_1'}+P_{w_1,w_1'}$ in $Q[1,k-1]$. Now $P_{x,y}=P_{x,y}'+P_{u_1',v_1'}+P_{w_1,w_1'}+\{u_0v_0,u_0'u_{1}',v_0'v_{1}',w_0w_1,w_0'w_1'\}-\{u_0u_0',v_0v_0',w_0w_0'\}$ is the desired spanning path in $Q_n^{k}$; see Figure 2.

\begin{figure}[h]
\begin{center}
\includegraphics[scale=0.5]{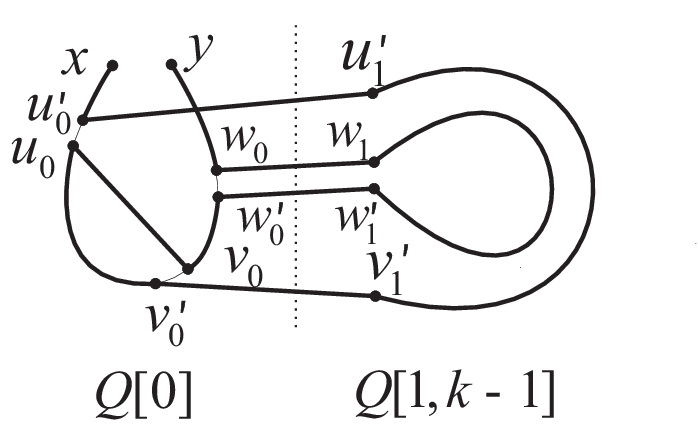}\\{Figure 2. Illustration for the proof of Case 2 in Lemma \ref{pathpartition12} when $M\cap E_d(0,1)=\{w_0w_{1}\}$ and $d_{P_{x,y}'}(w_0,u_0v_0)>1$.}
\end{center}
\end{figure}

\begin{figure}[h]
\begin{center}
\includegraphics[scale=0.5]{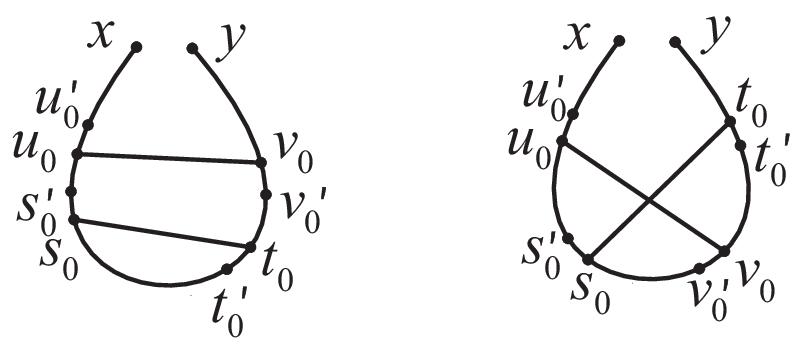}\\{Figure 3. The two possibilities for $\{u_0v_0,s_0t_0\}$ on $P_{x,y}'$ of Case 3 in Lemma \ref{pathpartition12}.}
\end{center}
\end{figure}

\textbf{Case 3.} $|M_0|=2n-8$. Now $M\cap E_d(0,1)=M[1,k-1]|=\emptyset$.

Let $\{u_0v_0,s_0t_0\}\subseteq M_0$ such that $\{u_0,v_0,s_0,t_0\}\cap\{x,y\}=\emptyset$. By the induction hypothesis there is a spanning path $P_{x,y}'$ passing through $M_0\setminus\{u_0v_0,s_0t_0\}$ in $Q[0]$. When $\{u_0v_0,s_0t_0\}\cap E(P_{x,y}')\neq\emptyset$, the conclusion holds by Case 1 or Case 2. When $\{u_0v_0,s_0t_0\}\cap E(P_{x,y}')=\emptyset$, there are two possibilities up to isomorphism for $\{u_0v_0,s_0t_0\}$ on $P_{x,y}'$; see Figure 3. Choose neighbors $u_0',v_0',s_0',t_0'$ of $u_0,v_0,s_0,t_0$ on $P_{x,y}'$, respectively, which are closer to $x$. Since $M$ is a matching and $\{u_0v_0,s_0t_0\}\subseteq M$, we have $\{u_0u_0',v_0v_0',s_0s_0',t_0t_0'\}\cap M=\emptyset$. Note that $u_0',v_0',s_0',t_0'$ are distinct and whenever $k$ is even, $p(u_0')\neq p(v_0')$ and $p(s_0')\neq p(t_0')$. Then $P_{x,y}'-\{u_0u_0',v_0v_0',s_0s_0',t_0t_0'\}+\{u_0v_0,s_0t_0\}$ is a spanning 3-path passing through $M$ in $Q[0]$, denoted by $P_{x,a_0}+P_{y,b_0}+P_{c_0,d_0}$. Now $\{a_0,b_0,c_0,d_0\}=\{u_0',v_0',s_0',t_0'\}$, which is balanced when $k$ is even. By Lemma \ref{pathpartition3} or Lemma \ref{pathpartition5} there is a spanning 2-path $P_{a_1,c_1}+P_{b_1,d_1}$ in $Q[1]$. Choose an edge $r_1r_1'\in E(P_{a_1,c_1}+P_{b_1,d_1})$. By Lemma \ref{hamilpath1}  there is a spanning path $P_{r_2,r_2'}$ in $Q[2,k-1]$. Now $P_{x,y}=P_{x,a_0}+P_{y,b_0}+P_{c_0,d_0}+P_{a_1,c_1}+P_{b_1,d_1}+P_{r_2,r_2'}+\{a_0a_1,b_0b_1,c_0c_1,d_0d_1,r_1r_2,r_1'r_2'\}-\{r_1r_1'\}$ is the desired spanning path in $Q_n^{k}$.
\end{proof}

\section{Proof of Theorem \ref{mosttheorem}}

We prove the theorem by induction on $n$. The theorem holds for $n\leq12$ by Theorem \ref{maintheorem}. Suppose that the theorem holds for $n-1(\geq12)$, we are to show that it holds for $n(\geq13)$.

Since $|M|\leq4n-20$, there exists an integer $d\in\{1,\ldots,n\}$ such that $|M\cap E_d|\leq3$. Split $Q_n^{k}$ into subcubes $Q[0],\ldots,Q[k-1]$ by $E_d$. By symmetry we may assume $|M_0|\geq|M_i|$ for every $i\geq1$. Now $|M_i|\leq\lfloor\frac{4n-20}{2}\rfloor=2n-10<2n-4$ for every $i\geq1$.

\vskip 0.2cm

\textbf{Claim 1.} \emph{If there is a Hamiltonian cycle $C_0$ containing $M_0$ in $Q[0]$, then we can construct a Hamiltonian cycle containing $M$ in $Q_n^{k}$.}

\vskip 0.2cm

\textbf{Case 1.} $\forall i\in\{0,\ldots,k-1\}$, $|M\cap E_d(i,i+1)|\leq1$.

If $M\cap E_d(0,1)=\emptyset$ or $M\cap E_d(k-1,0)=\emptyset$, by symmetry we may assume $M\cap E_d(k-1,0)=\emptyset$. When $M\cap E_d(0,1)=\emptyset$, since $|E(C_0)|-|M_0|-|M_1|\geq k^{n-1}-(4n-20)>1$, there exists an edge $w_0w_0'\in E(C_0)\setminus M_0$ such that $w_1w_1'\notin M_1$. When $M\cap E_d(0,1)\neq\emptyset$, let $M\cap E_d(0,1)=\{w_0w_1\}$ and $w_0'$ be a neighbor of $w_0$ on $C_0$, thus $w_0w_0'\notin M_0$ and $w_1w_1'\notin M_1$. By Lemma \ref{hamilpath2} there exists a spanning path $P_{w_1,w_1'}$ passing through $M[1,k-1]$ in $Q[1,k-1]$. Hence, $C_0+P_{w_1,w_1'}+\{w_0w_1,w_0'w_1'\}-\{w_0w_0'\}$ is the desired Hamiltonian cycle in $Q_n^{k}$.

Otherwise, $|M\cap E_d(0,1)|=1$ and $|M\cap E_d(k-1,0)|=1$. Let $M\cap E_d(0,1)=\{a_{0}a_{1}\}$ and $M\cap E_d(k-1,0)=\{c_{k-1}c_{0}\}$. We choose neighbor $a_{0}'$ and $c_{0}'$ of $a_{0}$ and $c_{0}$ on $C_0$ such that $a_{0}a_{0}'\neq c_{0}c_{0}'$. Then $\{a_{0}a_{0}',c_{0}c_{0}',a_{1}a_{1}',c_{k-1}c_{k-1}'\}\cap M=\emptyset$. Since $|M\cap E_d|\leq3$ and $k\geq4$, there exists an integer $j\in\{1,\ldots,k-2\}$ such that $M\cap E_d(j,j+1)=\emptyset$. Lemma \ref{hamilpath2} implies that there exist spanning paths $P_{a_1,a_1'}$ passing through $M[1,j]$ in $Q[1,j]$ and $P_{c_{k-1},c_{k-1}'}$ passing through $M[j+1,k-1]$ in $Q[j+1,k-1]$, respectively. Now $C_0+P_{a_1,a_1'}+P_{c_{k-1},c_{k-1}'}+\{a_0a_1,a_0'a_1',c_{0}c_{k-1},c_{0}'c_{k-1}'\}-\{a_{0}a_{0}',c_{0}c_{0}'\}$ is the desired Hamiltonian cycle in $Q_n^{k}$.

\textbf{Case 2.} $\exists j\in\{0,\ldots,k-1\}$, $|M\cap E_d(j,j+1)|=2$.

\textbf{Subcase 2.1.} $j=0$ or $j=k-1$. By symmetry we may assume $j=0$.

Let $M\cap E_d(0,1)=\{a_{0}a_{1},b_{0}b_{1}\}$. First, we find a Hamiltonian cycle $C[0,1]$ containing $M[0,1]$ in $Q[0,1]$. If $d_{C_0}(a_0,b_0)=1$, since $\{a_{0}a_{1},b_{0}b_{1}\}\subseteq M$, we have $\{a_{0}b_{0},a_{1}b_{1}\}\cap M=\emptyset$. Since $|M_{1}|<2n-4$, Lemma \ref{hamilpath2} implies that there is a spanning path $P_{a_{1},b_{1}}$ passing through $M_{1}$ in $Q[1]$. Let $C[0,1]=C_0+P_{a_{1},b_{1}}+\{a_{0}a_{1},b_{0}b_{1}\}-\{a_{0}b_{0}\}$. If $d_{C_0}(a_0,b_0)>1$, then denote the two neighbors of $a_{0}$ on $C_0$ by $a_0'$ and $a_0''$. We choose a neighbor $b_0'$ of $b_0$ on $C_0$ such that $b_0'\notin\{a_0',a_0''\}$. Since $a_{1}'b_{1}'$ and $a_{1}''b_{1}'$ cannot be in $M_{1}$ simultaneously, without loss of generality, assume $a_{1}'b_{1}'\notin M_{1}$. Since $|M_{1}|<2(n-1)-7$, by Lemma \ref{pathpartition2} there is a spanning 2-path $P_{a_{1},a_{1}'}+P_{b_{1},b_{1}'}$ passing through $M_{1}$ in $Q[1]$. Let $C[0,1]=C_0+P_{a_{1},a_{1}'}+P_{b_{1},b_{1}'}+\{a_{0}a_{1},a_{0}'a_{1}',b_{0}b_{1},b_{0}'b_{1}'\}-\{a_{0}a_0',b_{0}b_0'\}$.

Since $|M\cap E_d(1,2)|+|M\cap E_d(k-1,0)|\leq1$, by symmetry, assume $M\cap E_d(k-1,0)=\emptyset$. If $M\cap E_d(1,2)=\emptyset$, since $|E(C[0,1])\cap E(Q[1])|-|M_{1}|-|M_{2}|\geq k^{n-1}-2-(4n-22)>1$, we may choose an edge $s_{1}s_{1}'\in E(C[0,1])\cap E(Q[1])\setminus M_{1}$ such that $s_{2}s_{2}'\notin M_{2}$. If $|M\cap E_d(1,2)|=1$, let $M\cap E_d(1,2)=\{s_{1}s_{2}\}$ and $s_{1}'$ be a neighbor of $s_{1}$ on $E(C[0,1])\cap E(Q[1])$, then $\{s_{1}s_{1}',s_{2}s_{2}'\}\cap M=\emptyset$. Since $|M_{i}|<2n-4$ for every $i\in\{2,\ldots,k-1\}$, Lemma \ref{hamilpath2} implies that there is a spanning path $P_{s_{2},s_{2}'}$ passing through $M[2,k-1]$ in $Q[2,k-1]$. Hence, $C[0,1]+P_{s_{2},s_{2}'}+\{s_{1}s_{2},s_{1}'s_{2}'\}-\{s_{1}s_{1}'\}$ is the desired Hamiltonian cycle in $Q_n^{k}$.

\textbf{Subcase 2.2.} $1\leq j\leq k-2$.

Let $M\cap E_d(j,j+1)=\{a_{j}a_{j+1},b_{j}b_{j+1}\}$. Since $\sum_{i=0}^{j-1}|M\cap E_d(i,i+1)|+\sum_{i=j+1}^{k-1}|M\cap E_d(i,i+1)|\leq1$, by symmetry we can assume $\sum_{i=j+1}^{k-1}|M\cap E_d(i,i+1)|=0$.

First, we find a Hamiltonian cycle $C[0,j-1]$ containing $M[0,j-1]$ in $Q[0,j-1]$. If $j=1$, let $C[0,j-1]=C_0$. Otherwise, $1<j\leq k-2$. When $M\cap E_d(0,1)=\emptyset$, since $|E(C_0)|-|M_{0}|-|M_{1}|\geq k^{n-1}-(4n-20-2)>1$, we may choose an edge $r_{0}r_{0}'\in E(C_0)\setminus M_{0}$ such that $r_{1}r_{1}'\notin M_{1}$. When $|M\cap E_d(0,1)|=1$, let $M\cap E_d(0,1)=\{r_0r_1\}$ and $r_{0}'$ be a neighbor of $r_{0}$ on $C_0$, thus $\{r_0r_0',r_1r_1'\}\cap M=\emptyset$. Note that $|M_i|<2n-4$ for every $i\geq1$. Lemma \ref{hamilpath2} implies that there is a spanning path $P_{r_1,r_1'}$ passing through $M[1,j-1]$ in $Q[1,j-1]$ such that $E(P_{r_1,r_1'})\cap E(Q[j-1])$ forms a spanning path in $Q[j-1]$. Let $C[0,j-1]=C_0+P_{r_1,r_1'}+\{r_0r_1,r_0'r_1'\}-\{r_0r_0'\}$.

Next, we construct a Hamiltonian cycle $C[0,j+1]$ containing $M[0,j+1]$ in $Q[0,j+1]$. If $M\cap E_d(j-1,j)=\emptyset$, since $|E(C[0,j-1])\cap E(Q[j-1])\setminus M_{j-1}|-2\cdot|V(M_{j})\cup\{a_{j},b_{j}\}|\geq k^{n-1}-1-(4n-20)-2(2(2n-10)+2)>k^{n-1}-1-2(2(4n-20))>1$, there exists an edge $s_{j-1}s_{j-1}'\in E(C[0,j-1])\cap E(Q[j-1])\setminus M_{j-1}$ such that $\{s_{j},s_{j}'\}\cap(V(M_{j})\cup\{a_{j},b_{j}\})=\emptyset$. If $|M\cap E_d(j-1,j)|=1$, let $M\cap E_d(j-1,j)=\{s_{j-1}s_{j}\}$ and $s_{j-1}'$ be a neighbor of $s_{j-1}$ on $E(C[0,j-1])\cap E(Q[j-1])$, thus $\{s_{j-1}s_{j-1}',s_{j}s_{j}'\}\cap M=\emptyset$ and $s_{j}\notin(V(M_{j})\cup\{a_{j},b_{j}\})$. Note that $s_{j}'$ could be $a_{j}$ or $b_{j}$, without affecting the following proof. Since $Q_n^{k}$ contains no 3-cycle when $k\geq4$, each edge in $Q_n^{k}$ has at most one vertex adjacent to $a_{j}$, and the same holds true for $b_{j}$. Since $a_{j}$ has $2(n-1)>2n-10+2$ neighbors in $Q[j]$, we may choose one, denoted by $a_{j}'$, such that $a_{j}'\notin V(M_{j})\cup\{b_{j},s_{j},s_{j}'\}$. Since $b_{j}$ has $2(n-1)>2n-10+3$ neighbors in $Q[j]$, we may choose one, denoted by $b_{j}'$, such that $b_{j}'\notin V(M_{j})\cup\{a_{j},a_{j}',s_{j},s_{j}'\}$ and $a_{j+1}'b_{j+1}'\notin M_{j+1}$. Then $M_{j}\cup\{s_{j}s_{j}',a_{j}a_{j}',b_{j}b_{j}'\}$ is a linear forest in $Q[j]$ with $|M_{j}\cup\{s_{j}s_{j}',a_{j}a_{j}',b_{j}b_{j}'\}|\leq2n-10+3<2(n-1)-1$. By Theorem \ref{forest} there is a Hamiltonian cycle $C_j$ containing $M_{j}\cup\{s_{j}s_{j}',a_{j}a_{j}',b_{j}b_{j}'\}$ in $Q[j]$. Since $|M_{j+1}|<2(n-1)-7$, by Lemma \ref{pathpartition2} there is a spanning 2-path $P_{a_{j+1},a_{j+1}'}+P_{b_{j+1},b_{j+1}'}$ passing through $M_{j+1}$ in $Q[j+1]$. Let $C[0,j+1]=C[0,j-1]+C_j+P_{a_{j+1},a_{j+1}'}+P_{b_{j+1},b_{j+1}'}+\{s_{j-1}s_{j},s_{j-1}'s_{j}',a_{j}a_{j+1},a_{j}'a_{j+1}',b_{j}b_{j+1},$
$b_{j}'b_{j+1}'\}-\{s_{j-1}s_{j-1}',s_{j}s_{j}',a_{j}a_{j}',b_{j}b_{j}'\}$.

If $j=k-2$, then $C[0,j+1]$ is the desired Hamiltonian cycle in $Q_n^{k}$. If $j<k-2$, since $|E(C[0,j+1])\cap E(Q[j+1])|-|M_{j+1}|-|M_{j+2}|\geq k^{n-1}-2-(4n-22)>1$, there exists an edge $t_{j+1}t_{j+1}'\in E(C[0,j+1])\cap E(Q[j+1])\setminus M_{j+1}$ such that $t_{j+2}t_{j+2}'\notin M_{j+2}$. Lemma \ref{hamilpath2} implies that there is a spanning path $P_{t_{j+2},t_{j+2}'}$ passing through $M[j+2,k-1]$ in $Q[j+2,k-1]$. Now $C[0,j+1]+P_{t_{j+2},t_{j+2}'}+\{t_{j+1}t_{j+2},t_{j+1}'t_{j+2}'\}-\{t_{j+1}t_{j+1}'\}$ is the desired Hamiltonian cycle in $Q_n^{k}$.

\textbf{Case 3.} $\exists j\in\{0,\ldots,k-1\}$, $|M\cap E_d(j,j+1)|=3$.

Let $M\cap E_d(j,j+1)=\{a_{j}a_{j+1},b_{j}b_{j+1},c_{j}c_{j+1}\}$. Then $M\cap E_d(i,i+1)=\emptyset$ for every $ i\in\{0,\ldots,k-1\}\setminus\{j\}$.

\textbf{Subcase 3.1.} $j=0$ or $j=k-1$.

By symmetry we may assume $j=0$. Now $M\cap E_d(0,1)=\{a_{0}a_{1},b_{0}b_{1},c_{0}c_{1}\}$.

\textbf{Subcase 3.1.1.} $d_{C_0}(a_0,b_0)=1$, or $d_{C_0}(a_0,c_0)=1$, or $d_{C_0}(b_0,c_0)=1$.

By symmetry we assume $d_{C_0}(a_0,b_0)=1$. Choose a neighbor $c_{0}'$ of $c_{0}$ on $C_0$ such $c_{0}'\notin\{a_0,b_0\}$. Note that $|M_{1}|<2(n-1)-7$ and $\{a_{1},b_{1},c_{1}\}\cap V(M_{1})=\emptyset$. By Lemma \ref{pathpartition2} there is a spanning 2-path $P_{a_{1},b_{1}}+P_{c_{1},c_{1}'}$ passing through $M_{1}$ in $Q[1]$. Since $|E(C_0)|-|M_{0}|-|M_{k-1}|\geq k^{n-1}-1-(4n-23)>3$, we may choose an edge $s_{0}s_{0}'\in E(C_0)\setminus (M_{0}\cup\{a_{0}b_{0},c_{0}c_{0}'\})$ such that $s_{k-1}s_{k-1}'\notin M_{k-1}$. Lemma \ref{hamilpath2} implies that there is a spanning path $P_{s_{k-1}s_{k-1}'}$ passing through $M[2,k-1]$ in $Q[2,k-1]$. Hence, $C_0+P_{a_{1},b_{1}}+P_{c_{1},c_{1}'}+P_{s_{k-1}s_{k-1}'}+\{a_{0}a_{1},b_{0}b_{1},c_{0}c_{1},c_{0}'c_{1}',s_{0}s_{k-1},s_{0}'s_{k-1}'\}-$
$\{a_0b_0,c_{0}c_{0}',s_{0}s_{0}'\}$ is the desired Hamiltonian cycle in $Q_n^{k}$.

\textbf{Subcase 3.1.2.} $d_{C_0}(a_0,b_0)>1$, and $d_{C_0}(a_0,c_0)>1$, and $d_{C_0}(b_0,c_0)>1$.

Denote the two neighbors of $a_{0}$ on $C_0$ by $a_0'$ and $a_0''$. We choose a neighbor $b_0'$ of $b_0$ on $C_0$ such that $b_0'\notin\{a_0',a_0''\}$. Since $a_{1}'b_{1}'$ and $a_{1}''b_{1}'$ cannot be in $M_{1}$ simultaneously, without loss of generality, assume $a_{1}'b_{1}'\notin M_{1}$. Note that $a_{0},a_0',b_0,b_0',c_0$ are distinct and $|M_{1}|\leq2(n-1)-7$. By Lemma \ref{pathpartition2} there is a spanning 2-path $P_{a_{1},a_{1}'}+P_{b_{1},b_{1}'}$ passing through $M_{1}$ in $Q[1]$. Thus $C[0,1]=C_0+P_{a_{1},a_{1}'}+P_{b_{1},b_{1}'}+\{a_0a_1,a_0'a_{1}',b_0b_{1},b_0'b_1'\}-\{a_0a_0',b_0b_0'\}$ is a Hamiltonian cycle containing $M[0,1]\setminus\{c_0c_{1}\}$ in $Q[0,1]$; see Figure 4. Recall that $|M_{i}|\leq2n-10$ for every $2\leq i\leq k-1$, and $M\cap E_d(i,i+1)=\emptyset$ for every $ i\in\{1,\ldots,k-1\}$.

\textbf{Claim A.} \emph{Let $2\leq p<q\leq k-1$, and whenever $k$ is even, $q-p$ is odd. In $Q[p,q]$, if $x_p$ and $y_q$ are neighbors of $c_p$ and $c_q$ in $Q[p]$ and $Q[q]$, respectively, then there is a spanning path $P_{x_p,y_q}$ passing through $M[p,q]$ in $Q[p,q]$.}

We prove the claim by induction on $q-p$. When $q-p=1$, choose a neighbor $s_p$ of $x_p$ in $Q[p]$ such that $x_ps_p\notin M_p$, and $s_q\neq y_q$, and $s_qy_q\notin M_q$. Since $x_p$ has $2(n-1)>3$ neighbors in $Q[p]$, the above vertex $s_p$ exists. Now $d_{Q[p]}(x_p,s_p)=1$ and $d_{Q[q]}(s_q,y_q)\leq3$, and whenever $k$ is even, $p(x_p)=p(s_q)\neq p(s_p)=p(y_q)$. By Lemma \ref{pathpartition12}, there exist spanning paths $P_{x_p,s_p}$ passing through $M_{p}$ in $Q[p]$ and $P_{s_q,y_q}$ passing through $M_q$ in $Q[q]$. Let $P_{x_p,y_q}=P_{x_p,s_p}+s_ps_q+P_{s_q,y_q}$.

Suppose the claim holds for all $q'-p'<q-p$, where whenever $k$ is even, $q'-p'$ is odd. We are to show that it holds for $q-p>1$. When $k$ is even, since $q-p$ is odd, we have $q-(p+2)$ is odd. By induction, there exist spanning paths $P_{x_p,x_{p+1}}$ passing through $M[p,p+1]$ in $Q[p,p+1]$ and $P_{x_{p+2},y_q}$ passing through $M[p+2,q]$ in $Q[p+2,q]$. Let $P_{x_p,y_q}=P_{x_p,x_{p+1}}+x_{p+1}x_{p+2}+P_{x_{p+2},y_q}$. When $k$ is odd, choose a neighbor $s_p$ of $c_p$ in $Q[p]$ such that $s_p\neq x_p$. Now $d_{Q[p]}(x_p,s_p)=2$. By Lemma \ref{pathpartition12} and by induction, there exist spanning paths $P_{x_p,s_p}$ passing through $M[p]$ in $Q[p]$ and $P_{s_{p+1},y_q}$ passing through $M[p+1,q]$ in $Q[p+1,q]$. Let $P_{x_p,y_q}=P_{x_p,s_p}+s_ps_{p+1}+P_{s_{p+1},y_q}$.

Claim A is proved.

Choose neighbors $c_0'$ of $c_0$ and $c_1''$ of $c_1$ on $C[0,1]$, respectively, such that one of the paths joining $c_0$ and $c_1$ on $C[0,1]$ contains $c_0'$ and the other contains $c_1''$. Since $c_0\notin\{a_{0},a_0',b_0,b_0'\}$, the two neighbors of $c_0$ on $C[0,1]$ both lie in $Q[0]$ and the two neighbors of $c_1$ on $C[0,1]$ both lie in $Q[1]$. So $c_0'\in V(Q[0])$ and $c_1''\in V(Q[1])$. Clearly, $c_2''$ and $c_{k-1}'$ are neighbors of $c_2$ and $c_{k-1}$ in $Q[2]$ and $Q[k-1]$, respectively, and whenever $k$ is even, $k-1-2$ is odd. By Claim A, there is a spanning path $P_{c_2'',c_{k-1}'}$ passing through $M[2,k-1]$ in $Q[2,k-1]$. Hence, $C[0,1]+P_{c_2'',c_{k-1}'}+\{c_0c_{1},c_1''c_2'',c_0'c_{k-1}'\}-\{c_0c_0',c_1c_1''\}$ is the desired Hamiltonian cycle in $Q_n^{k}$; see Figure 4.

\begin{figure}[h]
\begin{center}
\includegraphics[scale=0.5]{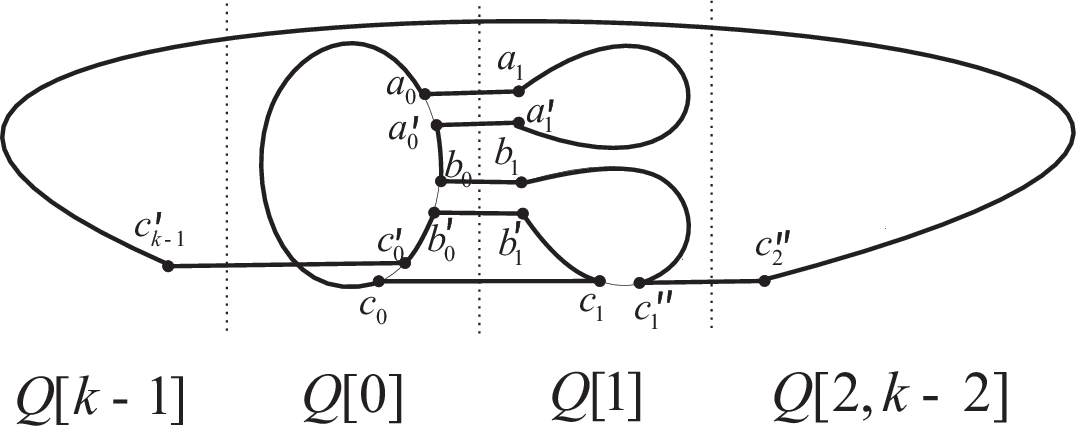}\\{Figure 4. Illustration for the proof of Subcase 3.1.2 in Claim 1.}
\end{center}
\end{figure}

\textbf{Subcase 3.2.} $1\leq j\leq k-2$.

First, we find a Hamiltonian cycle $C[0,j-1]$ containing $M[0,j-1]$ in $Q[0,j-1]$. If $j=1$, let $C[0,j-1]=C_0$. If $j>1$, since $|E(C_0)|-|M_{0}|-|M_{1}|\geq k^{n-1}-(4n-20-3)>1$, we may choose an edge $r_{0}r_{0}'\in E(C_0)\setminus M_{0}$ such that $r_{1}r_{1}'\notin M_{1}$. Since $|M_i|<2n-4$ for every $i\geq1$, Lemma \ref{hamilpath2} implies that there is a spanning path $P_{r_{1},r_{1}'}$ passing through $M[1,j-1]$ in $Q[1,j-1]$ such that $E(P_{r_1,r_1'})\cap E(Q[j-1])$ forms a spanning path in $Q[j-1]$. Let $C[0,j-1]=C_0+P_{r_{1},r_{1}'}+\{r_{0}r_{1},r_{0}'r_{1}'\}-\{r_{0}r_{0}'\}$.

Next, we find a Hamiltonian cycle $C[0,j+1]$ containing $M[0,j+1]$ in $Q[0,j+1]$. Since $a_{j}$ has $2(n-1)>3$ neighbors in $Q[j]$, we may choose one, denoted by $a_{j}'$, such that $a_{j}'\notin\{b_{j},c_{j}\}$. Since $b_{j}$ has $2(n-1)>5$ neighbors in $Q[j]$, we may choose one, denoted by $b_{j}'$, such that $b_{j}'\notin\{a_{j},a_{j}',c_{j}\}$ and $a_{j+1}'b_{j+1}'\notin M_{j+1}$. Since $c_{j}$ has $2(n-1)>8$ neighbors in $Q[j]$, we may choose one, denoted by $c_{j}'$, such that $c_{j}'\notin\{a_{j},a_{j}',b_{j},b_{j}'\}$ and $\{a_{j+1}'c_{j+1}',b_{j+1}'c_{j+1}'\}\cap M_{j+1}=\emptyset$. So $a_{j},a_{j}',b_{j},b_{j}',c_{j},c_{j}'$ are distinct vertices. Since $\{a_{j},b_{j},c_{j}\}\cap V(M_{j})=\emptyset$, we have $M_{j}\cup\{a_{j}a_{j}',b_{j}b_{j}',c_{j}c_{j}'\}$ is a linear forest. Since $|E(C[0,j-1])\cap E(Q[j-1])\setminus M_{j-1}|-2\cdot2\cdot|M_{j}\cup\{a_{j}a_{j}',b_{j}b_{j}',c_{j}c_{j}'\}|\leq k^{n-1}-1-2n-12-4(2n-12+3)>1$, there exists an edge $w_{j-1}w_{j-1}'\in E(C[0,j-1])\cap E(Q[j-1])\setminus M_{j-1}$ such that $\{w_{j},w_{j}'\}\cap V(M_{j}\cup\{a_{j}a_{j}',b_{j}b_{j}',c_{j}c_{j}'\})=\emptyset$. Thus $M_{j}\cup\{a_{j}a_{j}',b_{j}b_{j}',c_{j}c_{j}',w_{j}w_{j}'\}$ is a linear forest. Note that $|M_{j}\cup\{w_{j}w_{j}',a_{j}a_{j}',b_{j}b_{j}',c_{j}c_{j}'\}|<2(n-1)-1$ and $|M_{j+1}|\leq\lfloor\frac{4n-20-3}{2}\rfloor=2n-12=2(n-1)-10$. By Theorem \ref{forest} there is a Hamiltonian cycle $C_j$ containing $M_{j}\cup\{w_{j}w_{j}',a_{j}a_{j}',b_{j}b_{j}',c_{j}c_{j}'\}$ in $Q[j]$ and by Lemma \ref{pathpartition11} there is a spanning 3-path $P_{a_{j+1},a_{j+1}'}+P_{b_{j+1},b_{j+1}'}+P_{c_{j+1},c_{j+1}'}$ passing through $M_{j+1}$ in $Q[j+1]$. Let $C[0,j+1]=C[0,j-1]+C_j+$
$P_{a_{j+1},a_{j+1}'}+P_{b_{j+1},b_{j+1}'}+P_{c_{j+1},c_{j+1}'}+\{w_{j-1}w_{j},w_{j-1}'w_{j}',$
$a_{j}a_{j+1},a_{j}'a_{j+1}',b_{j}b_{j+1},b_{j}'b_{j+1}',c_{j}c_{j+1},c_{j}'c_{j+1}'\}-\{w_{j-1}w_{j-1}',w_{j}w_{j}',a_{j}a_{j}',b_{j}b_{j}',c_{j}c_{j}'\}$.

If $j=k-2$, then $C[0,j+1]$ is the desired Hamiltonian cycle in $Q_n^{k}$. If $j<k-2$, since $|E(C[0,j+1])\cap E(Q[j+1])|-|M_{j+1}|-|M_{j+2}|\geq k^{n-1}-3-(4n-23)>1$, there exists an edge $t_{j+1}t_{j+1}'\in E(C[0,j+1])\cap E(Q[j+1])\setminus M_{j+1}$ such that $t_{j+2}t_{j+2}'\notin M_{j+2}$. Recall that $M\cap E_d(i,i+1)=\emptyset$ and $|M_i|<2n-4$ for every $i\geq j$. Lemma \ref{hamilpath2} implies that there is a spanning path $P_{t_{j+2},t_{j+2}'}$ passing through $M[j+2,k-1]$ in $Q[j+2,k-1]$. Now $C[0,j+1]+P_{t_{j+2},t_{j+2}'}+\{t_{j+1}t_{j+2},t_{j+1}'t_{j+2}'\}-\{t_{j+1}t_{j+1}'\}$ is the desired Hamiltonian cycle in $Q_n^{k}$.

Claim 1 is proved.

\vskip 0.2cm

\textbf{Claim 2.} \emph{Let $x_0y_0\in M_0$ and $C_0$ be a Hamiltonian cycle containing $M_0\setminus\{x_0y_0\}$ in $Q[0]$. If $x_0y_0\notin E(C_0)$ and $|M\cap E_d(0,1)|+|M\cap E_d(k-1,0)|+|M[1,k-1]|\leq3$, then we can construct a Hamiltonian cycle containing $M$ in $Q_n^{k}$.}

\vskip 0.2cm

Let $x_0'$ and $y_0'$ be neighbors of $x_0$ and $y_0$ on $C_0$, respectively, such that one of the paths joining $x_0$ and $y_0$ on $C_0$ contains $x_0'$ and the other contains $y_0'$. Note that $x_0',y_0'$ are distinct, and $p(x_0')\neq p(y_0')$ when $k$ is even. There are always two ways to choose $x_0'$ and $y_0'$. The four vertices obtained from two choices are distinct, because there is no cycle of length 3 in $Q_n^{k}$. Since $x_0y_0\in M$, we have $\{x_0x_0',y_0y_0'\}\cap M=\emptyset$.

\textbf{Case 1.} $|M\cap E_d(0,1)|+|M\cap E_d(k-1,0)|=0$. Now $|M[1,k-1]|\leq3$.

\textbf{Claim B.} \emph{If $|M_1|\leq1$ and there is an integer $j(1\leq j\leq k-1)$ such that $|M\cap E_d(i,i+1)|\leq1$ for every $i\in\{1,\ldots,j-1\}$, then there is a Hamiltonian cycle $C[0,j]$ containing $M[0,j]$ in $Q[0,j]$ such that $E(C[0,j])\cap E(Q[j])$ forms a spanning path in $Q[j]$.}

Since $|M_1|\leq1$, we can choose $x_0'$ and $y_0'$ such that $x_1'y_1'\notin M_1$. Note that $x_1',y_1'$ are distinct, and $p(x_1')\neq p(y_1')$ whenever $k$ is even. By Lemma \ref{pathpartition8} there is a spanning path $P_{x_1',y_1'}$ passing through $M_1$ in $Q[1]$. Let $C[0,1]=C_0+P_{x_1',y_1'}+\{x_0y_0,x_0'x_1',y_0'y_1'\}-\{x_0x_0',y_0y_0'\}$. If $j=1$, $C[0,j]=C[0,1]$. Otherwise, $j>1$. If $M\cap E_d(1,2)=\emptyset$, since $|E(P_{x_1',y_1'})|-|M_{1}|-|M_{2}|\geq k^{n-1}-1-3>1$, we can choose an edge $r_{1}r_{1}'\in E(P_{x_1',y_1'})\setminus M_{1}$ such that $r_{2}r_{2}'\notin M_{2}$. If $|M\cap E_d(1,2)|=1$, let $M\cap E_d(1,2)=\{r_{1}r_{2}\}$ and $r_{1}'$ be a neighbor of $r_{1}$ on $P_{x_1',y_1'}$, so $\{r_{1}r_{1}',r_{2}r_{2}'\}\cap M=\emptyset$. Lemma \ref{hamilpath2} implies that there is a spanning path $P_{r_{2},r_{2}'}$ passing through $M[2,j]$ in $Q[2,j]$ such that $E(P_{r_{2},r_{2}'})\cap E(Q[j])$ forms a spanning path in $Q[j]$. Let $C[0,j]=C[0,1]+P_{r_{2},r_{2}'}+\{r_{1}r_{2},r_{1}'r_{2}'\}-\{r_{1}r_{1}'\}$; see Figure 5.

Claim B is proved.

\textbf{Subcase 1.1.} $\forall i\in\{1,\ldots,k-2\}$, $|M\cap E_d(i,i+1)|\leq1$.

Since $|M_1|+|M_{k-1}|\leq|M[1,k-1]|\leq3$, then $|M_1|\leq1$ or $|M_{k-1}|\leq1$. By symmetry we may assume $|M_1|\leq1$. By Claim B there is a Hamiltonian cycle $C[0,j]$ containing $M[0,j]$ in $Q[0,j]$. Let $j=k-1$, $C[0,j]$ is the desired Hamiltonian cycle in $Q_n^{k}$.

\textbf{Subcase 1.2.} $\exists j\in\{1,\ldots,k-2\}$, $|M\cap E_d(j,j+1)|=2$.

Since $|M[1,j]|+|M[j+1,k-1]|\leq|M[1,k-1]|-|M\cap E_d(j,j+1)|\leq3-2\leq1$, by symmetry we may assume that $|M[j+1,k-1]|=0$. Since $|M[1,j]|\leq1$, we have $|M_1|\leq1$ and $|M\cap E_d(i,i+1)|\leq1$ for every $i\in\{1,\ldots,j-1\}$. By Claim B there is a Hamiltonian cycle $C[0,j]$ containing $M[0,j]$ in $Q[0,j]$ such that $E(C[0,j])\cap E(Q[j])$ forms a spanning path in $Q[j]$. Let $M\cap E_d(j,j+1)=\{a_{j}a_{j+1},b_{j}b_{j+1}\}$. If $d_{C[0,j]\cap E(Q[j])}(a_j,b_j)=1$, since $\{a_ja_{j+1},b_jb_{j+1}\}\subseteq M$, we have $\{a_jb_j,a_{j+1}b_{j+1}\}\cap M=\emptyset$. Lemma \ref{hamilpath2} implies that there is a spanning path $P_{a_{j+1},b_{j+1}}$ in $Q[j+1,k-1]$. Hence, $C[0,j]+P_{a_{j+1},b_{j+1}}+\{a_ja_{j+1},b_jb_{j+1}\}-\{a_jb_j\}$ is the desired Hamiltonian cycle in $Q_n^{k}$. Otherwise, $d_{C[0,j]\cap E(Q[j])}(a_j,b_j)>1$. Choose neighbors $a_j'$ of $a_j$ and $b_j'$ of $b_j$ on $C[0,j]\cap E(Q[j])$, respectively, such that $a_j'\neq b_j'$. Note that $a_{j+1},a_{j+1}',b_{j+1},b_{j+1}'$ are distinct vertices in $Q[j+1]$ and whenever $k$ is even, $p(a_{j+1})\neq p(a_{j+1}')$ and $p(b_{j+1})\neq p(b_{j+1}')$. By Lemma \ref{pathpartition3} there is a spanning 2-path $P_{a_{j+1},a_{j+1}'}+P_{b_{j+1},b_{j+1}'}$ in $Q[j+1,k-1]$. Now $C[0,j]+P_{a_{j+1},a_{j+1}'}+P_{b_{j+1},b_{j+1}'}+\{a_ja_{j+1},a_j'a_{j+1}',b_jb_{j+1},b_j'b_{j+1}'\}-\{a_jb_j',b_jb_j'\}$ is the desired Hamiltonian cycle in $Q_n^{k}$; see Figure 5.

\begin{figure}[h]
\begin{center}
\includegraphics[scale=0.5]{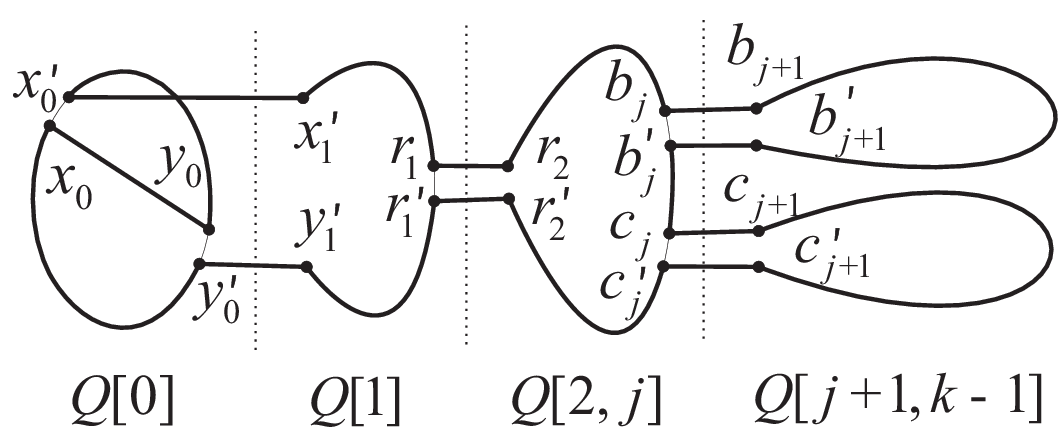}\\{Figure 5. Illustration for the proof of Subcase 1.2 in Claim 2.}
\end{center}
\end{figure}

\textbf{Subcase 1.3.} $\exists j\in\{1,\ldots,k-2\}$, $|M\cap E_d(j,j+1)|=3$.

Now $M[1,j]=M[j+1,k-1]=\emptyset$. Since $k\geq4$, we have $j>1$ or $j<k-2$. By symmetry, we may assume $j>1$. By Claim B there is a Hamiltonian cycle $C[0,j-1]$ containing $M[0,j-1]$ in $Q[0,j-1]$ such that $E(C[0,j-1])\cap E(Q[j-1])$ forms a spanning path in $Q[j-1]$. Let $M\cap E_d(j,j+1)=\{a_{j}a_{j+1},b_{j}b_{j+1},c_{j}c_{j+1}\}$. Since $|E(C[0,j-1])\cap E(Q[j-1])|\geq k^{n-1}-1>7$ and at most six edges in $E(C[0,j-1])\cap E(Q[j-1])$ are incident with $\{a_{j},b_{j},c_{j}\}$, there exists an edge $w_{j-1}w_{j-1}'\in E(C[0,j-1])\cap E(Q[j-1])$ such that $\{w_{j},w_{j}'\}\cap\{a_{j},b_{j},c_{j}\}$. Since $a_{j},b_{j}$ and $c_{j}$ all have $2(n-1)>7$ neighbors on $Q[j]$, we may choose neighbors $a_{j}',b_{j}',c_{j}'$ of $a_{j},b_{j},c_{j}$ on $Q[j]$, respectively, such that $\{a_{j}',b_{j}',c_{j}'\}\cap\{w_{j},w_{j}',a_{j},b_{j},c_{j}\}=\emptyset$ and $a_{j}',b_{j}',c_{j}'$ are distinct. Note that $w_{j},w_{j}',a_{j},a_{j}',b_{j},b_{j}',c_{j},c_{j}'$ are distinct vertices, thus $\{w_{j}w_{j}',a_{j}a_{j}',b_{j}b_{j}',c_{j}c_{j}'\}$ is a matching of size 4 in $Q[j]$. By Theorem \ref{forest} there is a Hamiltonian cycle $C_j$ containing $\{w_{j}w_{j}',a_{j}a_{j}',b_{j}b_{j}',c_{j}c_{j}'\}$ in $Q[j]$ and by Lemma \ref{pathpartition11} there is a spanning 3-path $P_{a_{j+1},a_{j+1}'}+P_{b_{j+1},b_{j+1}'}+P_{c_{j+1},c_{j+1}'}$ in $Q[j+1]$. Let $C[0,j+1]=C[0,j-1]+C_j+$
$P_{a_{j+1},a_{j+1}'}+P_{b_{j+1},b_{j+1}'}+P_{c_{j+1},c_{j+1}'}+\{w_{j-1}w_{j},w_{j-1}'w_{j}',a_{j}a_{j+1},a_{j}'a_{j+1}',$
$b_{j}b_{j+1},b_{j}'b_{j+1}',c_{j}c_{j+1},c_{j}'c_{j+1}'\}-\{w_{j-1}w_{j-1}',w_{j}w_{j}',a_{j}a_{j}',b_{j}b_{j}',c_{j}c_{j}'\}$.

If $j=k-2$, then $C[0,j+1]$ is the desired Hamiltonian cycle in $Q_n^{k}$. If $j<k-2$, we choose an edge $t_{j+1}t_{j+1}'\in E(C[0,j+1])\cap E(Q[j+1])$. Lemma \ref{hamilpath2} implies that there is a spanning path $P_{t_{j+2},t_{j+2}'}$ in $Q[j+2,k-1]$. Now $C[0,j+1]+P_{t_{j+2},t_{j+2}'}+\{t_{j+1}t_{j+2},t_{j+1}'t_{j+2}'\}-\{t_{j+1}t_{j+1}'\}$ is the desired Hamiltonian cycle in $Q_n^{k}$.

\textbf{Case 2.} $|M\cap E_d(0,1)|+|M\cap E_d(k-1,0)|=1$. Now $|M[1,k-1]|\leq2$.

By symmetry, we may assume $M\cap E_d(0,1)=\{a_0a_1\}$ and $M\cap E_d(k-1,0)=\emptyset$. If $d_{C_0}(a_0,x_0y_0)=1$, we choose $x_0'$ and $y_0'$ such that $a_0\in\{x_0',y_0'\}$. Now $x_1'y_1'\notin M_1$ and $|M_{1}|\leq2$, which also could use the Lemma \ref{pathpartition8} to construct the cycle $C[0,j]$ in Claim B. So this situation could return to the Case 1. Next, we discuss $d_{C_0}(a_0,x_0y_0)>1$. Note that the two ways to choose $x_0'$ and $y_0'$ both satisfy $a_0\notin\{x_0',y_0'\}$.

\textbf{Subcase 2.1.} $|M_{1}|=0$.

Choose a neighbor $a_0'$ of $a_0$ on $C_0$ such that $a_0'\notin\{x_0',y_0'\}$. Note that $a_1,a_1',x_1',y_1'$ are distinct and whenever $k$ is even, $p(x_1')\neq p(y_1')$. By Lemma \ref{pathpartition9} there is a spanning 2-path $P_{x_1',y_1'}+P_{a_1,a_1'}$ in $Q[1]$, where $P_{a_1,a_1'}=a_1a_1'$ is an edge.

If $M\cap E_d(1,2)=\emptyset$, since $|E(P_{x_1',y_1'}+a_1a_1')|-|M_{2}|\geq k^{n-1}-2-2>1$, we can choose an edge $w_1w_1'\in E(P_{x_1',y_1'}+a_1a_1')$ such that $w_2w_2'\notin M_{2}$. If $|M\cap E_d(1,2)|=1$, then let $M\cap E_d(1,2)=\{w_1w_2\}$ and $w_1'$ be a neighbor of $w_1$ on $P_{x_1',y_1'}+a_1a_1'$. If $|M\cap E_d(1,2)|=2$ (now denote $M\cap E_d(1,2)=\{b_1b_2,c_1c_2\}$) and $d_{P_{x_1',y_1'}+a_1a_1'}(b_1,c_1)=1$, then let $w_1w_1'=b_1c_1$. In the above three cases, we all have $w_2w_2'\notin M_{2}$. Since $|M[2,k-1]|\leq2$, by Lemma \ref{pathpartition8} there is a spanning path $P_{w_2,w_2'}$ passing through $M[2,k-1]$ in $Q[2,k-1]$. Hence, $C_0+P_{x_1',y_1'}+P_{w_2,w_2'}+\{x_0y_0,x_0'x_{1}',y_0'y_{1}',a_0a_1,a_0'a_1',a_1a_1',w_1w_2,w_1'w_2'\}-\{x_0x_0',y_0y_0',a_0a_0',w_1w_1'\}$ is the desired Hamiltonian cycle in $Q_n^{k}$.

It remains to consider the case $M\cap E_d(1,2)=\{b_1b_2,c_1c_2\}$ and $d_{P_{x_1',y_1'}+a_1a_1'}(b_1,c_1)>1$. Now $|M[2,k-1]|=0$. Choose neighbors $b_1',c_1'$ of $b_1,c_1$ on $P_{x_1',y_1'}+a_1a_1'$ such that $b_1'\neq c_1'$. Note that $b_2,b_2',c_2,c_2'$ are distinct and whenever $k$ is even, $p(b_2)\neq p(b_2')$ and $p(c_2)\neq p(c_2')$. By Lemma \ref{pathpartition3}, there is a spanning 2-path $P_{b_2,b_2'}+P_{c_2,c_2'}$ in $Q[2,k-1]$. Now $C_0+P_{x_1',y_1'}+P_{b_2,b_2'}+P_{c_2,c_2'}+\{x_0y_0,x_0'x_{1}',y_0'y_{1}',a_0a_1,a_0'a_1',a_1a_1',b_1b_2,b_1'b_2',c_1c_2,c_1'c_2'\}$
$-\{x_0x_0',y_0y_0',a_0a_0',b_1b_1',c_1c_1'\}$ is the desired Hamiltonian cycle in $Q_n^{k}$.

\textbf{Subcase 2.2.} $|M_{1}|\geq1$.

Since $|M[1,k-1]|\leq2$, we have $|M_{k-1}|\leq1$ and $\sum_{i=1}^{k-2}|M\cap E_d(i,i+1)|\leq1$. Since $k\geq4$, there exists an integer $j\in\{1,\ldots,k-2\}$ such that $M\cap E_d(j,j+1)=\emptyset$. Choose neighbors $a_0',x_0',y_0'$ of $a_0,x_0,y_0$ on $C_0$ such that $x_{k-1}'y_{k-1}'\notin M_{k-1}$. Since $\{x_0y_0,a_0a_1\}\subseteq M$, we have $\{x_0x_0',y_0y_0',a_0a_0',a_1a_1'\}\cap M=\emptyset$. Lemma \ref{pathpartition8} implies that there exist spanning paths $P_{a_1,a_1'}$ passing through $M[1,j]$ in $Q[1,j]$ and $P_{x_{k-1}',y_{k-1}'}$ passing through $M[j+1,k-1]$ in $Q[j+1,k-1]$, respectively. Hence, $C_0+P_{a_1,a_1'}+P_{x_{k-1}',y_{k-1}'}+\{x_0y_0,a_0a_1,a_0'a_1',x_0'x_{k-1}',y_0'y_{k-1}'\}-\{a_0a_0',x_0x_0',y_0y_0'\}$ is the desired Hamiltonian cycle in $Q_n^{k}$.

\textbf{Case 3.} $|M\cap E_d(0,1)|+|M\cap E_d(k-1,0)|=2$. Now $|M[1,k-1]|\leq1$.

Since $k\geq4$, there exists an integer $j\in\{1,\ldots,k-2\}$ such that $M\cap E_d(j,j+1)=\emptyset$. By symmetry we may assume $|M\cap E_d(k-1,0)|\leq1$.

\textbf{Subcase 3.1.} $M\cap E_d(k-1,0)=\emptyset$. Now $|M\cap E_d(0,1)|=2$.

Let $M\cap E_d(0,1)=\{a_0a_1,b_0b_1\}$. Since $|M_{k-1}|\leq1$, we can choose $x_0',y_0'$ such that $x_{k-1}'y_{k-1}'\notin M_{k-1}$. Let $P_{x_0',y_0'}=C_0+\{x_0y_0\}-\{x_0x_0',y_0y_0'\}$.

If $d_{P_{x_0',y_0'}}(a_0,b_0)=1$, then by Lemma \ref{hamilpathone} there exist spanning paths $P_{a_1,b_1}$ passing through $M[1,j]$ in $Q[1,j]$ and $P_{x_{k-1}',y_{k-1}'}$ passing through $M[j+1,k-1]$ in $Q[j+1,k-1]$, respectively. Hence, $P_{x_0',y_0'}+P_{a_1,b_1}+P_{x_{k-1}',y_{k-1}'}+\{x_0'x_{k-1}',y_0'y_{k-1}',a_0a_1,b_0b_1\}-\{a_0b_0\}$ is the desired Hamiltonian cycle in $Q_n^{k}$.

It remains to consider the case that $d_{P_{x_0',y_0'}}(a_0,b_0)>1$.

If $\{x_0',y_0'\}=\{a_0,b_0\}$, then by Lemma \ref{hamilpathone} there exists a spanning path $P_{x_{1}',y_{1}'}$ passing through $M[1,k-1]$ in $Q[1,k-1]$. Hence, $P_{x_0',y_0'}+P_{x_{1}',y_{1}'}+\{x_0'x_{1}',y_0'y_{1}'\}$ is the desired Hamiltonian cycle in $Q_n^{k}$.

If $|\{x_0',y_0'\}\cap\{a_0,b_0\}|=1$, without loss of generality we may assume $x_0'=a_0$ and $y_0'\neq b_0$. First, we choose a neighbor $b_0'$ of $b_0$ on $P_{x_0',y_0'}$ such that $b_1'\notin \{x_1',y_1'\}$ and $b_1'y_1'\notin M_{1}$. When $d_{P_{x_0',y_0'}}(b_0,y_0')>1$, the two neighbors of $b_0$ on $P_{x_0',y_0'}$ are neither $x_0'$ nor $y_0'$. We choose one, denoted by $b_0'$, such that $b_1'y_1'\notin M_{1}$. Clearly, $b_1'\notin \{x_1',y_1'\}$. When $d_{P_{x_0',y_0'}}(b_0,y_0')=1$, we choose the neighbor $b_0'$ of $b_0$ on $P_{x_0',y_0'}$ which is not $y_0'$. Clearly $b_1'\notin \{x_1',y_1'\}$. Note that $b_1'y_1'$ can not be an edge, otherwise, $b_1b_1',b_1'y_1',b_1y_1'$ forms a cycle of length 3 in $Q[0]$, which contradicts with $k\geq4$. So $b_1'y_1'\notin M_{1}$. Since $\{b_1,x_1'\}\cap V (M_{1})=\emptyset$ and $b_1'y_1'\notin M_{1}$, by Lemma \ref{pathpartition7} there is a spanning 2-path $P_{x_1',y_1'}+P_{b_1,b_1'}$ passing through $M[1,k-1]$ in $Q[1,k-1]$. Hence, $P_{x_0',y_0'}+P_{x_1',y_1'}+P_{b_1,b_1'}+\{x_0'x_{1}',y_0'y_{1}',b_0b_1,b_0'b_{1}'\}-\{b_0b_0'\}$ is the desired Hamiltonian cycle in $Q_n^{k}$.

If $\{x_0',y_0'\}\cap\{a_0,b_0\}=\emptyset$, since $d_{P_{x_0',y_0'}}(a_0,b_0)>1$ and $|M_{1}|\leq1$, we may choose neighbors $a_0',b_0'$ of $a_0,b_0$ on $C_0$ such that $a_1'b_1'\notin M_{1}$. It may happen that $\{a_0',b_0'\}\cap\{x_0',y_0'\}\neq\emptyset$, the constructions described below remain valid in these degenerate cases. Since $\{a_1,b_1\}\cap V (M_{1})=\emptyset$ and $a_1'b_1'\notin M_{1}$, by Lemma \ref{pathpartition7} when $j>1$ and Lemma \ref{pathpartition2} when $j=1$ there is a spanning 2-path $P_{a_1,a_1'}+P_{b_1,b_1'}$ passing through $M[1,j]$ in $Q[1,j]$. Next by Lemma \ref{hamilpathone} to construct a spanning path $P_{x_{k-1}',y_{k-1}'}$ passing through $M[j+1,k-1]$ in $Q[j+1,k-1]$. Now $P_{x_0',y_0'}+P_{x_{k-1}',y_{k-1}'}+P_{a_1,a_1'}+P_{b_1,b_1'}+\{x_0'x_{k-1}',y_0'y_{k-1}',a_0a_1,$
$a_0'a_1',b_0b_1,b_0'b_1'\}-\{a_0a_0',b_0b_0'\}$ is the desired Hamiltonian cycle in $Q_n^{k}$.

\textbf{Subcase 3.2.} $|M\cap E_d(k-1,0)|=1$. Now $|M\cap E_d(0,1)|=1$.

Let $M\cap E_d(0,1)=\{a_0a_1\}$ and $M\cap E_d(k-1,0)=\{b_0b_{k-1}\}$. Since there have two ways to choose $x_0',y_0'$ and the four vertices obtained from these two choices are distinct, so we can choose $x_0',y_0'$ such that $|\{x_0',y_0'\}\cap\{a_0,b_0\}|\leq1$. Let $P_{x_0',y_0'}=C_0+\{x_0y_0\}-\{x_0x_0',y_0y_0'\}$.

If $\{x_0',y_0'\}\cap\{a_0,b_0\}=\emptyset$, since $|M_{1}|+|M_{k-1}|\leq|M[1,k-1]|\leq1$, by symmetry we may assume $M_{1}=\emptyset$. Choose a neighbor $a_0'$ of $a_0$ on $P_{x_0',y_0'}$ such that $a_0'\notin\{x_0',y_0'\}$, and choose a neighbor $b_0'$ of $b_0$ on $P_{x_0',y_0'}$ such that $a_0a_0'\neq b_0b_0'$. Since $\{a_0a_1,b_0b_{k-1}\}\subseteq M$, we have $\{a_0a_0',a_1a_1',b_0b_0',b_{k-1}b_{k-1}'\}\cap M=\emptyset$. Note that $a_1,a_1',x_1',y_1'$ are distinct and whenever $k$ is even, $p(x_{1}')\neq p(y_{1}')$ and $p(a_1)\neq p(a_1')$. By Lemma \ref{pathpartition7} when $j>1$ and Lemma \ref{pathpartition2} when $j=1$ there exists a spanning 2-path $P_{a_1,a_1'}+P_{x_1',y_1'}$ passing through $M[1,j]$ in $Q[1,j]$. Lemma \ref{hamilpath2} implies that there exists a spanning path $P_{b_{k-1},b_{k-1}'}$ passing through $M[j+1,k-1]$ in $Q[j+1,k-1]$. Now $P_{x_0',y_0'}+P_{a_1,a_1'}+P_{x_1',y_1'}+P_{b_{k-1},b_{k-1}'}+\{x_0'x_{1}',y_0'y_{1}',a_0a_1,a_0'a_1',b_0b_{k-1},b_0'b_{k-1}'\}-\{a_0a_0',b_0b_0'\}$ is the desired Hamiltonian cycle in $Q_n^{k}$.

If $|\{x_0',y_0'\}\cap\{a_0,b_0\}|=1$, then by symmetry we may assume $x_0'=a_0$ and $y_0'\neq b_0$. Choose a neighbor $b_0'$ of $b_0$ on $P_{x_0',y_0'}$. Note that $x_1'y_1'\notin M_{1}$ and $b_{k-1}b_{k-1}'\notin M_{k-1}$. Lemma \ref{hamilpathone} implies that there exist spanning paths $P_{x_1',y_1'}$ passing through $M[1,j]$ in $Q[1,j]$ and $P_{b_{k-1},b_{k-1}'}$ passing through $M[j+1,k-1]$ in $Q[j+1,k-1]$, respectively. Now $P_{x_0',y_0'}+P_{x_1',y_1'}+P_{b_{k-1},b_{k-1}'}+\{x_0'x_{1}',y_0'y_{1}',b_0b_{k-1},b_0'b_{k-1}'\}-\{b_0b_0'\}$ is the desired Hamiltonian cycle in $Q_n^{k}$.

\textbf{Case 4.} $|M\cap E_d(0,1)|+|M\cap E_d(k-1,0)|=3$. Now $|M[1,k-1]|=0$.

Denote $V(M\cap E_d)\cap V(Q[0])=\{a_0,b_0,c_0\}$. Recall that there have two ways to choose $x_0'$ and $y_0'$, we choose $x_0',y_0'$ such that $|\{x_0',y_0'\}\cap\{a_0,b_0,c_0\}|\leq1$. Let $P_{x_0',y_0'}=C_0+\{x_0y_0\}-\{x_0x_0',y_0y_0'\}$.

\textbf{Subcase 4.1.} $\{x_0',y_0'\}\cap\{a_0,b_0,c_0\}=\emptyset$. By symmetry, there are two cases to consider.

\textbf{Subcase 4.1.1.} $M\cap E_d=\{a_0a_1,b_0b_1,c_0c_{1}\}$.

If $d_{P_{x_0',y_0'}}(a_0,b_0)=1$, or $d_{P_{x_0',y_0'}}(a_0,c_0)=1$, or $d_{P_{x_0',y_0'}}(b_0,c_0)=1$, without loss of generality assume $d_{P_{x_0',y_0'}}(a_0,b_0)=1$. Choose a neighbor $c_0'$ of $c_0$ on $P_{x_0',y_0'}$ such that $c_0'\notin\{a_0,b_0\}$. By Lemma \ref{pathpartition3} and Lemma \ref{hamilpath1} there exist spanning 2-path $P_{a_1,b_1}+P_{c_{1},c_{1}'}$ in $Q[1,k-2]$ and spanning path $P_{x_{k-1}',y_{k-1}'}$ in $Q[k-1]$, respectively. Now $P_{x_0',y_0'}+P_{a_1,b_1}+P_{c_{1},c_{1}'}+P_{x_{k-1}',y_{k-1}'}+\{x_0'x_{k-1}',y_0'y_{k-1}',a_0a_{1},b_0b_{1},c_0c_{1},c_0'c_{1}'\}$
$-\{a_0b_0,c_0c_0'\}$ is the desired Hamiltonian cycle in $Q_n^{k}$. If $d_{P_{x_0',y_0'}}(a_0,b_0)>1$, $d_{P_{x_0',y_0'}}(a_0,c_0)>1$ and $d_{P_{x_0',y_0'}}(b_0,c_0)>1$, then we choose neighbors $a_0'$, $b_0'$ and $c_0'$ of $a_0$, $b_0$ and $c_0$ on $P_{x_0',y_0'}$, respectively, in the same direction on $P_{x_0',y_0'}$. Note that $a_0,a_0',b_0,b_0',c_0,c_0'$ are distinct. By Lemma \ref{pathpartition11} and Lemma \ref{hamilpath1} there exist spanning 3-path $P_{a_1,a_1'}+P_{b_1,b_1'}+P_{c_1,c_1'}$ in $Q[1]$ and spanning path $P_{x_{k-1}',y_{k-1}'}$ in $Q[2,k-1]$, respectively. Hence, $P_{x_0',y_0'}+P_{a_1,a_1'}+P_{b_1,b_1'}+P_{c_1,c_1'}+P_{x_{k-1}',y_{k-1}'}+\{x_0'x_{k-1}',y_0'y_{k-1}',a_0a_{1},a_0'a_{1}',b_0b_{1},b_0'b_{1}',$
$c_0c_{1},c_0'c_{1}'\}-\{a_0a_0',b_0b_0',c_0c_0'\}$ is the desired Hamiltonian cycle in $Q_n^{k}$.

\textbf{Subcase 4.1.2.} $M\cap E_d=\{a_0a_1,b_0b_1,c_0c_{k-1}\}$.

If $d_{P_{x_0',y_0'}}(a_0,b_0)=1$, we choose a neighbor $c_0'$ of $P_{x_0',y_0'}$ on $C_0$ such that $c_0'\notin\{x_0',y_0'\}$. By Lemma \ref{pathpartition3} and Lemma \ref{hamilpath2} there exist spanning 2-path $P_{c_{k-1},c_{k-1}'}+P_{x_{k-1}',y_{k-1}'}$ in $Q[k-1]$ and spanning path $P_{a_1,b_1}$ in $Q[1,k-2]$. Now $P_{x_0',y_0'}+P_{a_1,b_1}+P_{c_{k-1},c_{k-1}'}+P_{x_{k-1}',y_{k-1}'}+\{x_0'x_{k-1}',y_0'y_{k-1}',a_0a_{1},b_0b_{1},c_0c_{k-1},c_0'c_{k-1}'\}$
$-\{a_0b_0,c_0c_0'\}$ is the desired Hamiltonian cycle in $Q_n^{k}$. If $d_{P_{x_0',y_0'}}(a_0,b_0)>1$, we choose a neighbor $c_0'$ of $c_0$ on $P_{x_0',y_0'}$ such that $c_0'\notin\{x_0',y_0'\}$, and choose neighbors $a_0'$, $b_0'$ of $a_0$, $b_0$ in the same direction on $P_{x_0',y_0'}$ such that $c_0c_0'\notin\{a_0a_0',b_0b_0'\}$. Note that $a_1,a_1',b_1,b_1',c_{k-1},c_{k-1}',$
$x_{k-1}',y_{k-1}'$ are distinct and whenever $k$ is even, $p(a_1)\neq p(a_1')$, $p(b_1)\neq p(b_1')$, $p(c_{k-1})\neq p(c_{k-1}')$ and $p(x_{k-1}')\neq p(y_{k-1}')$. By Lemma \ref{pathpartition3} there exist spanning 2-paths $P_{a_1,a_1'}+P_{b_1,b_1'}$ in $Q[1,k-2]$ and $P_{c_{k-1},c_{k-1}'}+P_{x_{k-1}',y_{k-1}'}$ in $Q[k-1]$, respectively. Hence, $P_{x_0',y_0'}+P_{a_1,a_1'}+P_{b_1,b_1'}+P_{c_{k-1},c_{k-1}'}+P_{x_{k-1}',y_{k-1}'}+\{x_0'x_{k-1}',y_0'y_{k-1}',a_0a_{1},a_0'a_{1}',b_0b_{1},b_0'b_{1}',$
$c_0c_{k-1},c_0'c_{k-1}'\}-\{a_0a_0',b_0b_0',c_0c_0'\}$ is the desired Hamiltonian cycle in $Q_n^{k}$.

\textbf{Subcase 4.2.} $|\{x_0',y_0'\}\cap\{a_0,b_0,c_0\}|=1$.

Without loss of generality, assume $\{x_0',y_0'\}\cap\{a_0,b_0,c_0\}=\{a_0\}$ and $a_0a_1\in M\cap E_d$. We distinguish three cases to consider.

\textbf{Subcase 4.2.1.} $\{b_0b_{k-1},c_{0}c_{k-1}\}\subseteq M\cap E_d$.

Choose the neighbors $b_0',c_0'$ of $b_0,c_0$ on $P_{x_0',y_0'}-E(P_{x_0',y_0'}[b_0,c_0])$, respectively. Then $b_0,b_0',c_0,c_0'$ are distinct. By Lemma \ref{pathpartition3} and Lemma \ref{hamilpath1} there exist spanning 2-path $P_{b_{k-1},b_{k-1}'}+P_{c_{k-1},c_{k-1}'}$ in $Q[k-1]$ and spanning path $P_{x_{1}',y_{1}'}$ in $Q[1,k-2]$. Hence, $P_{x_0',y_0'}+P_{b_{k-1},b_{k-1}'}+P_{c_{k-1},c_{k-1}'}+P_{x_{1}',y_{1}'}+\{x_0'x_{1}',y_0'y_{1}',$
$b_0b_{k-1},b_0'b_{k-1}',c_0c_{k-1},c_0'c_{k-1}'\}-\{b_0b_0',c_0c_0'\}$ is the desired Hamiltonian cycle in $Q_n^{k}$.

\textbf{Subcase 4.2.2.} $\{b_0b_{1},c_{0}c_{k-1}\}\subseteq M\cap E_d$.

Choose neighbors $b_0',c_0'$ of $b_0,c_0$ on $P_{x_0',y_0'}$, respectively, such that $b_{0}'\notin \{x_0',y_0'\}$ and $c_0c_{0}'\neq b_0b_{0}'$. Note that $x_{1}',y_{1}',b_1,b_1',c_{k-1},c_{k-1}'$ are distinct. By Lemma \ref{pathpartition3} and Lemma \ref{hamilpath1} there exist spanning 2-path $P_{x_{1}',y_{1}'}+P_{b_1,b_1'}$ in $Q[1,k-2]$ and spanning path $P_{c_{k-1},c_{k-1}'}$ in $Q[k-1]$. Hence, $P_{x_0',y_0'}+P_{x_{1}',y_{1}'}+P_{b_1,b_1'}+P_{c_{k-1},c_{k-1}'}+\{x_0'x_{1}',y_0'y_{1}',b_0b_{1},b_0'b_{1}',c_0c_{k-1},c_0'c_{k-1}'\}$
$-\{b_0b_0',c_0c_0'\}$ is the desired Hamiltonian cycle in $Q_n^{k}$.

\textbf{Subcase 4.2.3.} $\{b_0b_{1},c_{0}c_{1}\}\subseteq M\cap E_d$.

Note that at least one of $b_0$ and $c_{0}$, say $b_0$, has a neighbor $b_0'$ which is different from $x_0',y_0',b_0$ and $c_{0}$. Lemma \ref{pathpartition9} implies that there is a spanning path $P_{x_{1}',y_{1}'}$ in $Q[1]-\{b_1,b_1'\}$. Note that $c_1$ lies on $P_{x_{1}',y_{1}'}$. Choose the neighbor $c_0'$ of $c_0$ which is closer to $x_0'$ on $P_{x_0',y_0'}$, and choose the neighbor $c_1''$ of $c_1$ which is closer to $y_{1}'$ on $P_{x_{1}',y_{1}'}$. Since $c_0\notin\{b_0,b_0'\}$, we have $c_0c_0'\neq b_0b_0'$. Note that $k-1\neq2$ and $p(c_{k-1}')=p(c_{0})\neq p(c_{1})=p(c_2'')$ when $k$ is even. By Lemma \ref{hamilpath1} there is a spanning path $P_{c_2'',c_{k-1}'}$ in $Q[2,k-1]$. Now $P_{x_0',y_0'}+P_{x_{1}',y_{1}'}+P_{c_2'',c_{k-1}'}+\{x_0'x_{1}',y_0'y_{1}',b_0b_{1},b_0'b_{1}',c_0c_{1},c_1''c_2'',c_0'c_{k-1}'\}$
$-\{b_0b_0',c_0c_0',c_1c_1''\}$ is the desired Hamiltonian cycle in $Q_n^{k}$; see Figure 6.

\begin{figure}[h]
\begin{center}
\includegraphics[scale=0.5]{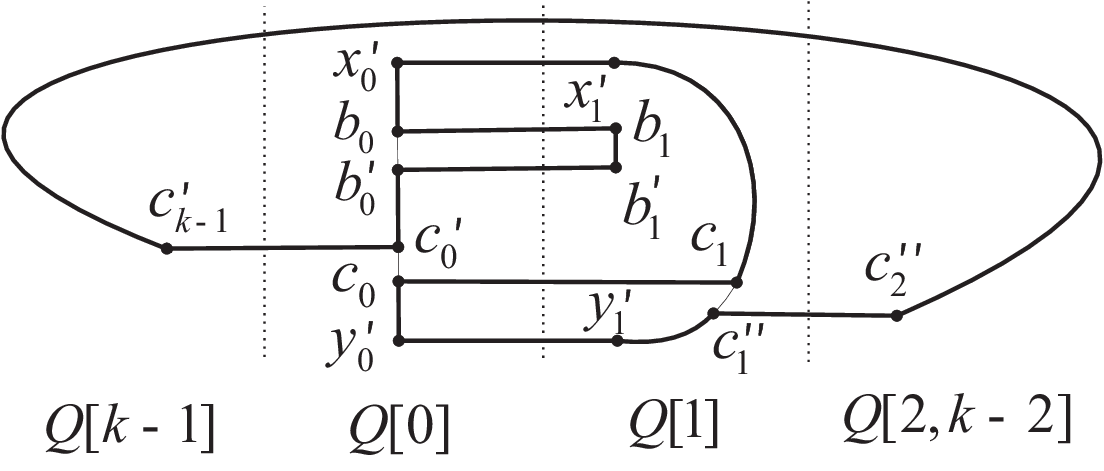}\\{Figure 6. Illustration for the proof of Subcase 4.2.3 in Claim 2.}
\end{center}
\end{figure}

Claim 2 is proved.

\vskip 0.2cm

\textbf{Claim 3.} \emph{Let $\{x_0y_0,m_0n_0\}\subseteq M_0$ and $C_0$ be a Hamiltonian cycle containing $M_0\setminus\{x_0y_0,m_0n_0\}$ in $Q[0]$. If $\{x_0y_0,m_0n_0\}\cap E(C_0)=\emptyset$ and $|M\cap E_d(0,1)|+|M\cap E_d(k-1,0)|+|M[1,k-1]|\leq2$, then we can construct a Hamiltonian cycle containing $M$ in $Q_n^{k}$.}

\vskip 0.2cm

Since $\{x_0y_0,m_0n_0\}\cap E(C_0)=\emptyset$, either one of the paths on $C_0$ joining $x_0$ and $y_0$ contains both $m_0$ and $n_0$, or one of the paths on $C_0$ joining $x_0$ and $y_0$ contains $m_0$ and the other contains $n_0$; see Figure 7 for example. Note that there are two directions, clockwise and counterclockwise, along a cycle. We choose clockwise neighbors $x_0',y_0',m_0',n_0'$ of $x_0,y_0,m_0,n_0$ on $C_{0}$, respectively; see Figure 7. Since $M$ is a matching and $\{x_0y_0,m_0n_0\}\subseteq M_0$, we have $\{x_0x_0',y_0y_0',m_0m_0',n_0n_0'\}\cap M=\emptyset$, and $x_0',y_0',m_0',n_0'$ are distinct, and whenever $k$ is even, $p(x_0')\neq p(y_0')$ and $p(m_0')\neq p(n_0')$. It may happen that $x_0'=m_0$ or $n_0'=y_0$ in Figure 7(1), or even some of $x_0'=m_0,y_0'=n_0,m_0'=y_0,n_0'=x_0$ in Figure 7(2).

Regardless of the above cases, $C_0-\{x_0x_0',y_0y_0',m_0m_0',n_0n_0'\}+\{x_0y_0,m_0n_0\}$ is a spanning 2-path passing through $M_0$ in $Q[0]$, denoted by $P_{c_0,d_0}+P_{e_0,f_0}$. Then $\{c_0,d_0,e_0,f_0\}=\{x_0',y_0',m_0',n_0'\}$, which is balanced when $k$ is even. First, choose a vertex in $\{c_0,d_0,e_0,f_0\}$ randomly, say $d_0$, then there exists at least one vertex in $\{e_0,f_0\}$, say $e_0$, satisfying $p(e_0)\neq p(d_0)$ when $k$ is even. If not, $p(d_0)=p(e_0)=p(f_0)$, this is contradictory. Therefore, $c_0,d_0,e_0,f_0$ are distinct, and whenever $k$ is even, $p(e_0)\neq p(d_0)$ and $p(c_0)\neq p(f_0)$.

\begin{figure}[h]
\begin{center}
\includegraphics[scale=0.5]{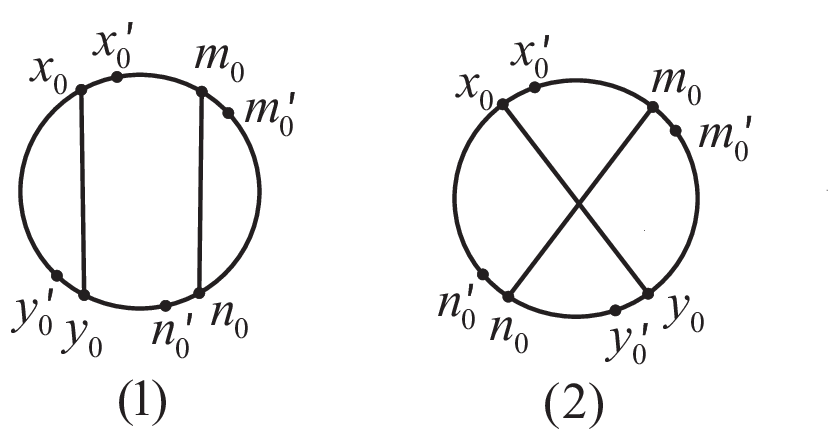}\\{Figure 7. Two possibilities of $\{x_0y_0,m_0n_0\}$ and neighbors of $x_0,y_0,m_0,n_0$ in Claim 3.}
\end{center}
\end{figure}

\textbf{Case 1.} $|M\cap E_d(0,1)|+|M\cap E_d(k-1,0)|=0$. Now $|M[1,k-1]|\leq2$.

\textbf{Subcase 1.1.} There exists an integer $j\in\{1,\ldots,k-2\}$ such that $M\cap E_d(j,j+1)=\emptyset$.

Since $|M_{1}|+|M_{k-1}|\leq|M[1,k-1]|\leq2$, by symmetry we can assume $|M_{1}|\leq1$. So $c_1f_1\notin M_1$ or $d_1e_1\notin M_1$. Without loss of generality, we assume $c_1f_1\notin M_1$. Note that $c_1,f_1$ are distinct and whenever $k$ is even, $p(c_1)\neq p(f_1)$. By Lemma \ref{pathpartition8} there is a spanning path $P_{c_1,f_1}$ passing through $M_1$ in $Q[1,j]$; see Figure 8.

\textbf{Subcase 1.1.1.} $d_{k-1}e_{k-1}\notin M_{k-1}$.

Note that $d_{k-1},e_{k-1}$ are distinct and whenever $k$ is even, $p(d_{k-1})\neq p(e_{k-1})$. By Lemma \ref{pathpartition8} there is a spanning path $P_{d_{k-1},e_{k-1}}$ passing through $M[j+1,k-1]$ in $Q[j+1,k-1]$. Hence, $P_{c_0,d_0}+P_{e_0,f_0}+P_{c_1,f_1}+P_{d_{k-1},e_{k-1}}+\{c_0c_1,f_0f_1,d_0d_{k-1},e_0e_{k-1}\}$ is the desired Hamiltonian cycle in $Q_n^{k}$.

\textbf{Subcase 1.1.2.} $d_{k-1}e_{k-1}\in M_{k-1}$.

Since $|E(P_{c_0,d_0}+P_{e_0,f_0})|-|M_{0}|-|M_{k-1}|\geq k^{n-1}-2-(4n-20)>5$, we may choose an edge $r_{0}r_{0}'\in E(P_{c_0,d_0}+P_{e_0,f_0})\setminus M_{0}$ such that $r_{k-1}r_{k-1}'\notin M_{k-1}$ and $\{r_{k-1},r_{k-1}'\}\cap \{d_{k-1},e_{k-1}\}=\emptyset$. Since $|M_{k-1}\setminus\{ d_{k-1}e_{k-1}\}|\leq1$, by Lemma \ref{pathpartition9} there is a spanning path $P_{r_{k-1},r_{k-1}'}$ passing through $M_{k-1}\setminus\{d_{k-1}e_{k-1}\}$ in $Q[k-1]-\{d_{k-1},e_{k-1}\}$. Let $C=P_{c_0,d_0}+P_{e_0,f_0}+P_{c_1,f_1}+P_{r_{k-1},r_{k-1}'}+\{c_0c_1,f_0f_1,d_0d_{k-1},e_0e_{k-1},d_{k-1}e_{k-1},r_{0}r_{k-1},$
$r_{0}'r_{k-1}'\}-\{r_{0}r_{0}'\}$; see Figure 8.

If $j=k-2$, then $C$ is the desired Hamiltonian cycle in $Q_n^{k}$. Otherwise, $j<k-2$. When $M\cap E_d(k-2,k-1)=\emptyset$, since $|E(P_{r_{k-1},r_{k-1}'})|-|M_{k-1}|-|M_{k-2}|\geq k^{n-1}-3-2>1$, we may choose an edge $w_{k-1}w_{k-1}'\in E(P_{r_{k-1},r_{k-1}'})\setminus M_{k-1}$ such that $w_{k-2}w_{k-2}'\notin M_{k-2}$. When $M\cap E_d(k-2,k-1)\neq\emptyset$, now $|M\cap E_d(k-2,k-1)|=1$. Let $M\cap E_d(k-2,k-1)=\{w_{k-2}w_{k-1}\}$ and let $w_{k-1}'$ be a neighbor of $w_{k-1}$ on $P_{r_{k-1},r_{k-1}'}$. We also have $w_{k-1}w_{k-1}'\notin M_{k-1}$ and $w_{k-2}w_{k-2}'\notin M_{k-2}$. In the above two cases, since $|M[j+1,k-2]|\leq1$, by Lemma \ref{hamilpath2} there is a spanning path $P_{w_{k-2},w_{k-2}'}$ passing through $M[j+1,k-2]$ in $Q[j+1,k-2]$. Now $C+P_{w_{k-2},w_{k-2}'}+\{w_{k-2}w_{k-1},w_{k-2}'w_{k-1}'\}-\{w_{k-1}w_{k-1}'\}$ is the desired Hamiltonian cycle in $Q_n^{k}$; see Figure 8.

\begin{figure}[h]
\begin{center}
\includegraphics[scale=0.5]{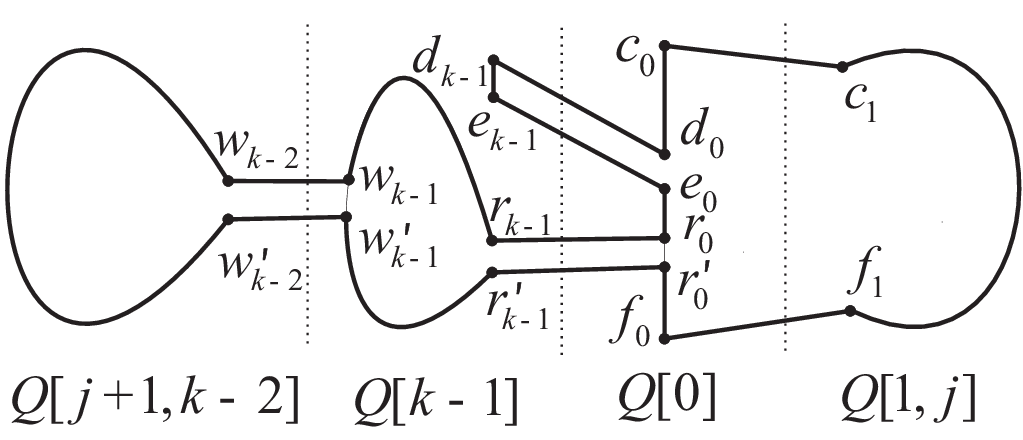}\\{Figure 8. Illustration for the proof of Subcase 1.1 in Claim 3.}
\end{center}
\end{figure}

\textbf{Subcase 1.2.} $M\cap E_d(i,i+1)\neq\emptyset$ for every $i\in\{1,\ldots,k-2\}$.

Since $\sum_{i=1}^{k-2}|M\cap E_d(i,i+1)|\leq|M[1,k-1]|\leq2$, then $k=4$ and $|M\cap E_d(1,2)|=|M\cap E_d(2,3)|=1$. Thus $M_{1}=M_{2}=M_3=\emptyset$. By Lemma \ref{pathpartition3} there is a spanning 2-path $P_{c_{1},f_{1}}+P_{d_{1},e_{1}}$ in $Q[1]$. Let $M\cap E_d(1,2)=\{a_1a_2\}$ and let $a_1'$ be a  neighbor of $a_1$ on $P_{c_{1},f_{1}}+P_{d_{1},e_{1}}$. By Lemma \ref{hamilpathone} there is a spanning path $P_{a_{2},a_2'}$ passing through $M[2,3]$ in $Q[2,3]$. Now $P_{c_0,d_0}+P_{e_0,f_0}+P_{c_{1},f_{1}}+P_{d_{1},e_{1}}+P_{a_{2},a_2'}+\{c_0c_{1},f_0f_{1},d_0d_{1},e_0e_{1},a_{1}a_{2},a_1'a_2'\}-\{a_{1}a_1'\}$ is the desired Hamiltonian cycle in $Q_n^{k}$.

\textbf{Case 2.} $|M\cap E_d(0,1)|+|M\cap E_d(k-1,0)|=1$. Now $|M[1,k-1]|\leq1$.

By symmetry we assume $M\cap E_d(0,1)=\{a_0a_1\}$ and $M\cap E_d(k-1,0)=\emptyset$. Since $k\geq4$, there exists an integer $j\in\{1,\ldots,k-2\}$ such that $M\cap E_d(j,j+1)=\emptyset$.

\textbf{Subcase 2.1.} $a_0\in\{c_0,d_0,e_0,f_0\}$, with loss of generality, we assume $a_0=c_0$.

Note that $c_1,f_1,d_{1},e_{1},d_{k-1},e_{k-1}$ are distinct and whenever $k$ is even, $p(c_1)\neq p(f_1)$, $p(e_1)\neq p(d_1)$ and $p(e_{k-1})\neq p(d_{k-1})$. If $d_{k-1}e_{k-1}\notin M_{k-1}$, since $|M[1,j]|\leq1$ and $|M[j+1,k-1]|\leq1$, by Lemma \ref{hamilpathone} there exist spanning paths $P_{c_{1},f_{1}}$ passing through $M[1,j]$ in $Q[1,j]$ and $P_{d_{k-1},e_{k-1}}$ passing through $M[j+1,k-1]$ in $Q[j+1,k-1]$, respectively. Hence, $P_{c_0,d_0}+P_{e_0,f_0}+P_{c_{1},f_{1}}+P_{d_{k-1},e_{k-1}}+\{c_0c_{1},f_0f_{1},d_0d_{k-1},e_0e_{k-1}\}$ is the desired Hamiltonian cycle in $Q_n^{k}$. If $d_{k-1}e_{k-1}\in M_{k-1}$, then $|M_{k-1}|=1$ and $|M[1,k-2]|=0$. By Lemma \ref{pathpartition7} there is a spanning 2-path $P_{c_{1},f_{1}}+P_{d_{1},e_{1}}$ passing through $M[1,k-1]$ in $Q[1,k-1]$. Now $P_{c_0,d_0}+P_{e_0,f_0}+P_{c_{1},f_{1}}+P_{d_{1},e_{1}}+\{c_0c_{1},f_0f_{1},d_0d_{1},e_0e_{1}\}$ is the desired Hamiltonian cycle in $Q_n^{k}$.

\textbf{Subcase 2.2.} $a_0\notin\{c_0,d_0,e_0,f_0\}$.

If $|M[1,j]|=0$, then since $|M_{k-1}|\leq|M[j+1,k-1]|\leq1$, we have $c_{k-1}f_{k-1}\notin M_{k-1}$ or $d_{k-1}e_{k-1}\notin M_{k-1}$. Without loss of generality, assume $d_{k-1}e_{k-1}\notin M_{k-1}$. Let $a_{0}'$ be a neighbor of $a_0$ on $P_{c_0,d_0}+P_{e_0,f_0}$ such that $a_{0}'\notin\{c_0,f_0\}$. By Lemma \ref{pathpartition3} there is a spanning 2-path $P_{c_{1},f_{1}}+P_{a_{1},a_{1}'}$ in $Q[1,j]$ and by Lemma \ref{hamilpathone} there is a spanning path $P_{d_{k-1},e_{k-1}}$ passing through $M[j+1,k-1]$ in $Q[j+1,k-1]$. Hence, $P_{c_0,d_0}+P_{e_0,f_0}+P_{c_{1},f_{1}}+P_{a_{1},a_{1}'}+P_{d_{k-1},e_{k-1}}+\{c_0c_{1},f_0f_{1},a_0a_{1},a_{0}'a_{1}',d_0d_{k-1},e_0e_{k-1}\}-\{a_0a_{0}'\}$ is the desired Hamiltonian cycle in $Q_n^{k}$. If $|M[1,j]|=1$, then $|M[j+1,k-1]|=0$. Let $a_{0}'$ be a neighbor of $a_0$ on $P_{c_0,d_0}+P_{e_0,f_0}$. There may happen the case $a_0'\in\{c_0,d_0,e_0,f_0\}$, but this does not affect our conclusion. By Lemma \ref{pathpartition3} there is a spanning 2-path $P_{c_{k-1},f_{k-1}}+P_{d_{k-1},e_{k-1}}$ in $Q[j+1,k-1]$ and by Lemma \ref{hamilpathone} there is a spanning path $P_{a_{1},a_{1}'}$ passing through $M[1,j]$ in $Q[1,j]$. Now $P_{c_0,d_0}+P_{e_0,f_0}+P_{a_{1},a_{1}'}+P_{c_{k-1},f_{k-1}}+P_{d_{k-1},e_{k-1}}+\{a_0a_{1},a_{0}'a_{1}',c_0c_{k-1},f_0f_{k-1},d_0d_{k-1},e_0e_{k-1}\}-\{a_0a_{0}'\}$ is the desired Hamiltonian cycle in $Q_n^{k}$.

\textbf{Case 3.} $|M\cap E_d(0,1)|+|M\cap E_d(k-1,0)|=2$. Now $|M[1,k-1]|=0$.

By symmetry, there are two cases to consider.

\textbf{Subcase 3.1.} $M\cap E_d(0,1)=\{a_0a_1,b_0b_{1}\}$ and $M\cap E_d(k-1,0)=\emptyset$.

If $\{c_0,d_0,e_0,f_0\}\cap\{a_0,b_0\}=\emptyset$, we can choose neighbors $a_{0}',b_{0}'$ of $a_{0},b_0$ on $P_{c_0,d_0}+P_{e_0,f_0}$ such that $a_0,a_{0}',b_0,b_{0}'$ are distinct. By Lemma \ref{pathpartition3} there exist spanning 2-paths $P_{a_1,a_{1}'}+P_{b_1,b_{1}'}$ in $Q[1,k-2]$ and $P_{c_{k-1},f_{k-1}}+P_{d_{k-1},e_{k-1}}$ in $Q[k-1]$. Now $P_{c_0,d_0}+P_{e_0,f_0}+P_{c_{k-1},f_{k-1}}+P_{d_{k-1},e_{k-1}}+P_{a_1,a_{1}'}+P_{b_1,b_{1}'}+\{c_0c_{k-1},f_0f_{k-1},d_0d_{k-1},e_0e_{k-1},$
$a_0a_{1},a_0'a_{1}',b_0b_{1},b_0'b_{1}'\}-\{a_0a_0',b_0b_0'\}$ is the desired Hamiltonian cycle in $Q_n^{k}$.

If $|\{c_0,d_0,e_0,f_0\}\cap\{a_0,b_0\}|=1$, without loss of generality, assume $a_0=c_0$ and $b_0\notin\{d_0,e_0,f_0\}$. Let $b_{0}'$ be a neighbor of $b_0$ on $P_{c_0,d_0}+P_{e_0,f_0}$ such that $b_{0}'\notin\{c_0,f_0\}$. By Lemma \ref{pathpartition3} there is a spanning 2-path $P_{c_{1},f_{1}}+P_{b_{1},b_{1}'}$ in $Q[1,k-2]$ and by Lemma \ref{hamilpath1} there is a spanning path $P_{d_{k-1},e_{k-1}}$ in $Q[k-1]$. Now $P_{c_0,d_0}+P_{e_0,f_0}+P_{c_{1},f_{1}}+P_{b_{1},b_{1}'}+P_{d_{k-1},e_{k-1}}+\{c_0c_{1},f_0f_{1},d_0d_{k-1},e_0e_{k-1},b_0b_{1},b_{0}'b_{1}'\}-\{b_0b_{0}'\}$ is the desired Hamiltonian cycle in $Q_n^{k}$.

If $|\{c_0,d_0,e_0,f_0\}\cap\{a_0,b_0\}|=2$, then by Lemma \ref{pathpartition3} there is a spanning 2-path $P_{c_{1},f_{1}}+P_{d_{1},e_{1}}$ in $Q[1,k-1]$. Hence, $P_{c_0,d_0}+P_{e_0,f_0}+P_{c_{1},f_{1}}+P_{d_{1},e_{1}}+\{c_0c_{1},f_0f_{1},d_0d_{1},e_0e_{1}\}$ is the desired Hamiltonian cycle in $Q_n^{k}$.

\textbf{Subcase 3.2.} $M\cap E_d(0,1)=\{a_0a_1\}$ and $M\cap E_d(k-1,0)=\{b_0b_{k-1}\}$.

If $\{c_0,d_0,e_0,f_0\}\cap\{a_0,b_0\}=\emptyset$, without loss of generality, assume $a_0\in V(P_{c_0,d_0})$. If $b_0\in V(P_{c_0,d_0})$, without loss of generality, assume $b_0$ is closer to $c_0$ than $a_{0}$ on $P_{c_0,d_0}$. Choose the neighbor $a_{0}'$ of $a_{0}$ on $P_{c_0,d_0}$ which is closer to $d_0$, and choose the neighbor $b_{0}'$ of $b_{0}$ on $P_{c_0,d_0}$ which is closer to $c_0$. If $b_0\notin V(P_{c_0,d_0})$, then we can choose neighbors $a_{0}'$ and $b_{0}'$ of $a_{0}$ and $b_0$ on $P_{c_0,d_0}$ and $P_{e_0,f_0}$, respectively, such that $a_{0}'\neq c_0$ and $b_{0}'\neq e_0$. In the above two cases, we both have $a_{0}a_{0}'\neq b_0b_{0}'$ and $a_{1},a_{1}',c_{1},f_{1},b_{k-1},b_{k-1}',d_{k-1},e_{k-1}$ are distinct. By Lemma \ref{pathpartition3} there exist spanning 2-paths $P_{a_{1},a_{1}'}+P_{c_{1},f_{1}}$ in $Q[1,k-2]$ and $P_{b_{k-1},b_{k-1}'}+P_{d_{k-1},e_{k-1}}$ in $Q[k-1]$, respectively. Now $P_{c_0,d_0}+P_{e_0,f_0}+P_{a_{1},a_{1}'}+P_{c_{1},f_{1}}+P_{b_{k-1},b_{k-1}'}+P_{d_{k-1},e_{k-1}}+\{a_0a_{1},a_0'a_{1}',c_0c_{1},f_0f_{1},b_0b_{k-1},b_0'b_{k-1}',d_0d_{k-1},e_0e_{k-1}\}$
$-\{a_0a_0',b_0b_0'\}$ is the desired Hamiltonian cycle in $Q_n^{k}$.

If $|\{c_0,d_0,e_0,f_0\}\cap\{a_0,b_0\}|\geq1$, without loss of generality, assume $a_0=c_0$. Let $b_{0}'$ be a neighbor of $b_0$ on $P_{c_0,d_0}+P_{e_0,f_0}$. By Lemma \ref{pathpartition3} there is a spanning 2-path $P_{c_{1},f_{1}}+P_{d_{1},e_{1}}$ in $Q[1,k-2]$ and by Lemma \ref{hamilpath1} there is a spanning path $P_{b_{k-1},b_{k-1}'}$ in $Q[k-1]$. Now $P_{c_0,d_0}+P_{e_0,f_0}+P_{c_{1},f_{1}}+P_{d_{1},e_{1}}+P_{b_{k-1},b_{k-1}'}+\{c_0c_{1},f_0f_{1},d_0d_{1},e_0e_{1},b_0b_{k-1},b_{0}'b_{k-1}'\}-\{b_0b_{0}'\}$ is the desired Hamiltonian cycle in $Q_n^{k}$.

Claim 3 is proved.

\vskip 0.2cm

\textbf{Claim 4.} \emph{Let $\{x_0y_0,m_0n_0,s_0t_0\}\subseteq M_0$ and $C_0$ be a Hamiltonian cycle containing $M_0\setminus\{x_0y_0,m_0n_0,s_0t_0\}$ in $Q[0]$. If $\{x_0y_0,m_0n_0,s_0t_0\}\cap E(C_0)=\emptyset$ and $|M\cap E_d(0,1)|+|M\cap E_d(k-1,0)|+|M[1,k-1]|\leq1$, then we can construct a Hamiltonian cycle containing $M$ in $Q_n^{k}$.}

\vskip 0.2cm

Choose clockwise neighbors $x_0',y_0',m_0',n_0',s_0',t_0'$ of $x_0,y_0,m_0,n_0,s_0,t_0$ on $C_{0}$. Note that $x_0',y_0',m_0',n_0',s_0',t_0'$ are distinct and $\{x_0x_0',y_0y_0',m_0m_0',n_0n_0',$ $s_0s_0',t_0t_0'\}\cap M=\emptyset$. Thus, $C_0-\{x_0x_0',y_0y_0',m_0m_0',n_0n_0',s_0s_0',t_0t_0'\}+\{x_0y_0,m_0n_0,s_0t_0\}$ is a spanning 3-path passing through $M_0$ in $Q[0]$, denoted by $P_{c_0,d_0}+P_{e_0,f_0}+P_{g_0,h_0}$. Then $\{c_0,d_0,e_0,f_0,g_0,h_0\}=\{x_0',y_0',m_0',n_0',s_0',t_0'\}$, which is balanced when $k$ is even. First, choose a vertex in $\{c_0,d_0,e_0,$ $f_0,g_0,h_0\}$ randomly, say $d_0$, then there exists at least one vertex in $\{e_0,f_0,g_0,h_0\}$, say $e_0$, satisfying $p(e_0)\neq p(d_0)$ when $k$ is even. So $\{c_0,f_0,g_0,h_0\}$ is balanced when $k$ is even. Thus there exists at least one vertex in $\{g_0,h_0\}$, say $g_0$, satisfying $p(g_0)\neq p(f_0)$ when $k$ is even; otherwise, $p(g_0)=p(h_0)=p(f_0)$, a contradiction. Therefore, $c_0,d_0,e_0,f_0,g_0,h_0$ are distinct, and whenever $k$ is even, $p(e_0)\neq p(d_0)$, $p(g_0)\neq p(f_0)$ and $p(c_0)\neq p(h_0)$. Since $|M\cap E_d(0,1)|+|M\cap E_d(k-1,0)|\leq1$, by symmetry we may assume $M\cap E_d(k-1,0)=\emptyset$.

If $|M\cap E_d(0,1)|=1$, then $|M[1,k-1]|=0$. Let $M\cap E_d(0,1)=\{a_0a_1\}$ and without loss of generality assume $a_0\in V(P_{c_0,d_0})$. Let $a_0'$ be a neighbor of $a_0$ on $P_{c_0,d_0}$. Note that $a_1,a_1',f_1,g_1$ are distinct. By Lemma \ref{pathpartition3} there exist spanning 2-paths $P_{a_1,a_1'}+P_{f_1,g_1}$ in $Q[1,k-2]$ and $P_{d_{k-1},e_{k-1}}+P_{c_{k-1},h_{k-1}}$ in $Q[k-1]$, respectively. Now $P_{c_0,d_0}+P_{e_0,f_0}+P_{g_0,h_0}+P_{a_1,a_1'}+P_{f_1,g_1}+P_{d_{k-1},e_{k-1}}+P_{c_{k-1},h_{k-1}}$ $+\{c_0c_{k-1},h_0h_{k-1},d_0d_{k-1},e_0e_{k-1},f_0f_1,g_0g_{1},a_0a_1,a_0'a_1'\}-\{a_0a_0'\}$ is the desired Hamiltonian cycle in $Q_n^{k}$.

If $M\cap E_d(0,1)=\emptyset$, then since $|M[1,k-1]|\leq1$ and $k\geq4$, by symmetry we may assume $M_{k-1}=M\cap E_d(k-2,k-1)=\emptyset$. Since $|M_{1}|\leq1$, there exists at least one pair of vertices in $\{\{c_1,h_{1}\},\{d_1,e_{1}\},\{f_1,g_{1}\}\}$, say $\{c_1,h_{1}\}$, satisfying $c_1h_{1}\notin M_1$. By Lemma \ref{pathpartition3} there is a spanning 2-path $P_{d_{k-1},e_{k-1}}+P_{f_{k-1},g_{k-1}}$ in $Q[k-1]$ and by Lemma \ref{hamilpathone} there is a spanning path $P_{c_1,h_1}$ passing through $M[1,k-2]$ in $Q[1,k-2]$. Now $P_{c_0,d_0}+P_{e_0,f_0}+P_{g_0,h_0}+P_{c_1,h_1}+P_{d_{k-1},e_{k-1}}+P_{f_{k-1},g_{k-1}}$ $+\{c_0c_1,h_0h_{1},d_0d_{k-1},e_0e_{k-1},f_0f_{k-1},g_0g_{k-1}\}$ is the desired Hamiltonian cycle in $Q_n^{k}$.

Claim 4 is proved.

If $|M_0|\leq4n-24=4(n-1)-20$, then by the induction hypothesis there is a Hamiltonian cycle $C_0$ containing $M_0$ in $Q[0]$; therefore the theorem holds by Claim 1.

If $|M_0|=4n-23$, then let $x_0y_0\in M_0$. By the induction hypothesis there is a Hamiltonian cycle $C_0$ containing $M_0\setminus\{x_0y_0\}$ in $Q[0]$. When $x_0y_0\in E(C_0)$, the theorem holds by Claim 1. When $x_0y_0\notin E(C_0)$, since $|M\cap E_d(0,1)|+|M\cap E_d(k-1,0)|+|M[1,k-1]|=|M|-|M_0|\leq3$, the theorem holds by Claim 2.

If $|M_0|=4n-22$, then let $\{x_0y_0,m_0n_0\}\subseteq M_0$. By the induction hypothesis there is a Hamiltonian cycle $C_0$ containing $M_0\setminus\{x_0y_0,m_0n_0\}$ in $Q[0]$. Since $|M\cap E_d(0,1)|+|M\cap E_d(k-1,0)|+|M[1,k-1]|\leq2$, by Claim 1 in case $\{x_0y_0,m_0n_0\}\subseteq E(C_0)$, and Claim 2 in case $|\{x_0y_0,m_0n_0\}\cap E(C_0)|=1$, and Claim 3 in case $\{x_0y_0,m_0n_0\}\cap E(C_0)=\emptyset$, the theorem holds.

If $|M_0|=4n-21$, then let $\{x_0y_0,m_0n_0,s_0t_0\}\subseteq M_0$. By the induction hypothesis there is a Hamiltonian cycle $C_0$ containing $M_0\setminus\{x_0y_0,m_0n_0,s_0t_0\}$ in $Q[0]$. Since $|M\cap E_d(0,1)|+|M\cap E_d(k-1,0)|+|M[1,k-1]|\leq1$, by Claim 1 in case $\{x_0y_0,m_0n_0,s_0t_0\}\subseteq E(C_0)$, and Claim 2 in case $|\{x_0y_0,m_0n_0,s_0t_0\}\cap E(C_0)|=2$, and Claim 3 in case $|\{x_0y_0,m_0n_0,s_0t_0\}|\cap E(C_0)=1$, and Claim 4 in case $\{x_0y_0,m_0n_0,s_0t_0\}\cap E(C_0)=\emptyset$, the theorem holds.

If $|M_0|=4n-20$, then $M\cap E_d(0,1)=M\cap E_d(k-1,0)=M[1,k-1]=\emptyset$. Let $\{x_0y_0,m_0n_0,s_0t_0,$
$w_0z_0\}\subseteq M_0$. By the induction hypothesis there is a Hamiltonian cycle $C_0$ containing $M_0\setminus\{x_0y_0,m_0n_0,s_0t_0,w_0z_0\}$ in $Q[0]$. If $\{x_0y_0,m_0n_0,s_0t_0,w_0z_0\}\cap E(C_{0})\neq\emptyset$, then the conclusion holds by Claim 1-4. If $\{x_0y_0,m_0n_0,s_0t_0,w_0z_0\}\cap E(C_{0})=\emptyset$, then choose clockwise neighbors $x_0',y_0',m_0',n_0',s_0',t_0',w_0',z_0'$ of $x_0,y_0,m_0,n_0,s_0,t_0,w_0,z_0$ on $C_{0}$, respectively. Note that $x_0',y_0',m_0',n_0',s_0',t_0',w_0',z_0'$ are distinct, and $\{x_0x_0',y_0y_0',m_0m_0',$
$n_0n_0',s_0s_0',t_0t_0',w_0w_0',z_0z_0'\}\cap M=\emptyset$, and whenever $k$ is even, $p(x_0')\neq p(y_0')$, $p(m_0')\neq p(n_0')$, $p(s_0')\neq p(t_0')$ and $p(w_0')\neq p(z_0')$.

Note that $C_0-\{x_0x_0',y_0y_0',m_0m_0',n_0n_0',s_0s_0',t_0t_0',w_0w_0',z_0z_0'\}+\{x_0y_0,m_0n_0,s_0t_0,w_0z_0\}$ is a spanning 4-path passing through $M$ in $Q[0]$, denoted by $P_{a_0,b_0}+P_{c_0,d_0}+P_{e_0,f_0}+P_{g_0,h_0}$. Then $\{a_0,b_0,c_0,d_0,e_0,f_0,g_0,h_0\}$
$=\{x_0',y_0',m_0',n_0',s_0',t_0',w_0',z_0'\}$, which is balanced when $k$ is even. In the following description, whenever the parities of vertices are mentioned, it is implicitly assumed to be the case where $k$ is even. First, choose a vertex in $\{a_0,b_0,c_0,d_0,e_0,f_0,g_0,h_0\}$ randomly, say $b_0$, then there exists at least one vertex in $\{c_0,d_0,e_0,f_0,g_0,h_0\}$, say $c_0$, satisfying $p(c_0)\neq p(b_0)$. Then $\{a_0,d_0,e_0,f_0,g_0,h_0\}$ is balanced. So there exists at least one vertex in $\{e_0,f_0,g_0,h_0\}$, say $e_0$, satisfying $p(e_0)\neq p(d_0)$. Then $\{a_0,f_0,g_0,h_0\}$ is balanced. Hence, there exists at least one vertex in $\{g_0,h_0\}$, say $g_0$, satisfying $p(g_0)\neq p(f_0)$. Thus, $p(a_0)\neq p(h_0)$. Now $b_1,c_1,d_1,e_1,f_{k-1},g_{k-1},a_{k-1},h_{k-1}$ are distinct, and whenever $k$ is even, $p(b_1)\neq p(c_1)$, $p(d_1)\neq p(e_1)$, $p(f_{k-1})\neq p(g_{k-1})$ and $p(a_{k-1})\neq p(h_{k-1})$. By Lemma \ref{pathpartition3} there exist spanning 2-paths $P_{b_1,c_1}+P_{d_1,e_1}$ in $Q[1,k-2]$ and $P_{f_{k-1},g_{k-1}}+P_{a_{k-1},h_{k-1}}$ in $Q[k-1]$, respectively. Now $P_{a_0,b_0}+P_{c_0,d_0}+P_{e_0,f_0}+P_{g_0,h_0}+P_{b_1,c_1}+P_{d_1,e_1}+P_{f_{k-1},g_{k-1}}+P_{a_{k-1},h_{k-1}}+$
$\{b_0b_1,c_0c_1,d_0d_1,e_0e_1,f_0f_{k-1},g_0g_{k-1},a_0a_{k-1},h_0h_{k-1}\}$ is the desired Hamiltonian cycle in $Q_n^{k}$.

The proof of Theorem \ref{mosttheorem} is complete.

\vskip 0.4cm

\noindent{\large\bf Acknowledgements}





\vskip 0.2cm

This work is supported by National Natural Science Foundation of China (grant no. 12061047), and supported by Natural Science Foundation of Jiangxi Province (nos. 20212BAB201027 and 20192BAB211002).

\vskip 0.4cm

\noindent{\large\bf Statements and Declarations}

\vskip 0.2cm

\noindent{\textbf{Competing Interests}}

\vskip 0.2cm

The authors declare that they have no known competing financial interests or personal relationships that could have appeared to influence the work reported in this paper.


\end{document}